\theoremstyle{definition}
\newtheorem{theorem}{Theorem}[section]
\newtheorem{corollary}[theorem]{Corollary}
\newtheorem{proposition}[theorem]{Proposition}
\newtheorem{lemma}[theorem]{Lemma}
\newtheorem{definition}[theorem]{Definition}
\newtheorem{notation}[theorem]{Notation}
\newtheorem{remark}[theorem]{Remark}
\newtheorem{assumption}{Assumption}
\newcommand{\matdev}{\partial^{\bullet}}
\newcommand{\ddt}[1]{\frac{\partial #1}{\partial t}}
\newcommand{\gradg}{\nabla_{\Gamma}}
\newcommand{\lapg}{\Delta_{\Gamma}}
\newcommand{\divg}{\gradg \cdot}
\newcommand{\Ech}{E^{\text{CH}}}
\newcommand{\Echd}{E^{\text{CH},\delta}}
\newcommand{\grad}{\nabla}
\def\Xint#1{\mathchoice
	{\XXint\displaystyle\textstyle{#1}}%
	{\XXint\textstyle\scriptstyle{#1}}%
	{\XXint\scriptstyle\scriptscriptstyle{#1}}%
	{\XXint\scriptscriptstyle\scriptscriptstyle{#1}}%
	\!\int}
\def\XXint#1#2#3{{\setbox0=\hbox{$#1{#2#3}{\int}$ }
		\vcenter{\hbox{$#2#3$ }}\kern-.6\wd0}}
\def\dashint{\Xint-}
\newcommand{\mval}[2]{\dashint_{#2} #1}
\newcommand{\mbf}[1]{\mathbf{#1}}
\newcommand{\mbb}[1]{\mathbb{#1}}
\newcommand{\invoperator}[1]{\mathcal{S}_{#1}}
\newcommand{\esssup}[1]{\underset{#1}{\operatorname{ess \ sup \ }}}
\title[Evolving surface Cahn-Hilliard equation with a degenerate mobility]{The evolving surface Cahn-Hilliard equation with a degenerate mobility}
\author{Charles M. Elliott \and
	Thomas Sales}
\address{Mathematics Institute, Zeeman Building, University of Warwick, Coventry CV4 7AL, UK}
\email{\href{mailto:c.m.elliott@warwick.ac.uk}{c.m.elliott@warwick.ac.uk}, \href{mailto:tom.sales@warwick.ac.uk}{tom.sales@warwick.ac.uk}}
\subjclass[2020]{35K65, 35K55, 35Q79}
\keywords{evolving surface, Cahn-Hilliard equation, degenerate mobility}
\date{}
\begin{document}

\begin{abstract}
We consider the existence of suitable weak solutions to the Cahn-Hilliard equation with a non-constant (degenerate) mobility on a class of evolving surfaces.
We also show weak-strong uniqueness for the case of a positive mobility function, and under some further assumptions on the initial data we show uniqueness for a class of strong solutions for a degenerate mobility function.
\end{abstract}

\maketitle
\section{Introduction}
This paper is concerned with the Cahn-Hilliard equation on a sufficiently smooth, closed evolving surface, $\Gamma(t) \subset \mbb{R}^3$ moving with prescribed velocity $V$, with a non constant mobility $M(\cdot)$,
\begin{align}
	\matdev u + (\gradg \cdot V)u = \gradg \cdot \left( M(u) \gradg \left(-\varepsilon \lapg u + \frac{1}{\varepsilon} F'(u)\right)\right), \label{cheqn}
\end{align}
as derived in \cite{caetano2021cahn}.
We note that we allow velocities $V$ which have some non-zero tangential component rather than evolution purely in the normal direction --- see also the discussion in \cite{caetano2021cahn}.
This equation is posed on the non-cylindrical space-time domain
\[ \mathcal{G}_T := \bigcup_{t \in [0,T]} \Gamma(t) \times \{t\}, \]
which is known since the velocity field $V$ is prescribed.
Here $\matdev$ denotes the material derivative following the flow of the velocity field $V$, $\gradg$ denotes the tangential gradient, and $\lapg$ denotes the Laplace-Beltrami operator --- all of which are defined in further detail in the following section.
In particular we are interested in the case where $M$ is a degenerate mobility, that is $M(r) > 0$ on $(-1,1)$ and $M(\pm1) = 0$ --- a typical example being $M(r) = 1 - r^2$.
We focus on potential functions of the form $F = F_1 + F_2$ where $F_1$ is convex (and possibly singular), and $F_2'$ has at most linear growth.
We provide explicit assumptions later on.\\

Interest in the Cahn-Hilliard equation posed on an evolving surface stems from applications, for instance \cite{eilks2008numerical,ErlAziKar01,GesMcCGas15,zhiliakov2021experimental} and the references therein.
This has resulted in work from both an analytic perspective, see for instance \cite{abels2024diffuse2,abels2024diffuse,caetano2023regularization,elliott2024navier}, and a computational perspective, see for instance \cite{beschle2022stability,elliott2015evolving,elliott2024fully}.
However, to the authors' knowledge there has been no rigorous study of the Cahn-Hilliard equation with a degenerate mobility posed on an (evolving) surface.
We note also that the models in \cite{eilks2008numerical,ErlAziKar01,GesMcCGas15} involve degenerate mobilities.\\

The main contributions of this work are twofold.
Firstly we extend the existing analysis to a variable, and then degenerate, mobility function, by adapting the approach of \cite{elliott1996cahn}.
Secondly, and the more novel part of this work, we show a weak-strong uniqueness result for a positive mobility function, and a uniqueness result for a class of sufficiently smooth solutions for the degenerate mobility.
For some existing further literature concerning the degenerate Cahn-Hilliard equation on a stationary Euclidean domain we refer to \cite{abels2013incompressible,dai2016weak,elbar2023degenerate}, and we refer the reader to the recent preprint \cite{ConGalGat24} in which the authors also show a uniqueness result for a positive mobility function.
We also note that our existence theory for a degenerate mobility does not apply to general smoothly evolving surfaces without some further assumptions on the potential and mobility functions.

\section{Preliminaries}
We assume $\Gamma(t)$ is a $C^3$ closed, orientable evolving surface, where there exist $C^3$ diffeomorphisms $\Phi(t) : \Gamma_0 \rightarrow \Gamma(t)$ such that for points $x_0 \in \Gamma_0$ one has
\[ \frac{d}{dt} \Phi(x_0,t) = V(\Phi(x_0,t),t), \]
for $V$ the velocity of $\Gamma(t)$.
We shall assume throughout that $V$ is sufficiently smooth so that
\[ \sup_{t \in [0,T]} \|V\|_{C^2(\Gamma(t))} \leq C < \infty. \]
We denote the pushforward of a function $\eta$ on $\Gamma_0$ as $\Phi_t \eta = \eta \circ \Phi(t)^{-1}$, and the pullback of a function $\psi$ on $\Gamma(t)$ by $\Phi_{-t} \psi = \psi \circ \Phi(t)$.
For a sufficiently smooth function $\eta$ the strong material derivative is defined as
\[ \matdev \eta := \Phi_t \left( \frac{d}{dt} \Phi_{-t} \eta \right), \]
and this is used to define suitable notion of a weak material derivative (see \cite{alphonse2023function,alphonse2015abstract}).\\

Denoting the normal vector on $\Gamma(t)$ by $\boldsymbol{\nu}$ one defines the tangential gradient as
\[ \gradg \eta := (I - \boldsymbol{\nu} \otimes \boldsymbol{\nu}) \nabla \eta^e,  \]
for $\eta^e$ denoting some differentiable extension of $\eta$ on a neighbourhood of $\Gamma(t)$.
Writing this componentwise as 
\[ \gradg \eta = \begin{pmatrix}
	\underline{D}_1 \eta\\
	\underline{D}_2 \eta\\
	\underline{D}_3 \eta
\end{pmatrix}, \]
one defines the tangential divergence of a vector field, $\boldsymbol{\eta}$ as
\[ \gradg \cdot \boldsymbol{\eta} = \sum_{i=1}^3 \underline{D}_i \boldsymbol{\eta}_i, \]
and the Laplace-Beltrami operator is defined as $\lapg \eta = \gradg \cdot \gradg \eta$.
It can be shown that these operators are independent of the choice of extension.

\subsection{Sobolev spaces}
\begin{definition}
	For $p \in [1, \infty]$, the Sobolev space $H^{1,p}(\Gamma)$ is then defined by
	$$ H^{1,p}(\Gamma) := \{ \eta \in L^p(\Gamma) \mid \underline{D}_i \eta \in L^p(\Gamma), \ i=1,...,n+1 \},$$
	and higher order spaces ($k \in \mathbb{N}$) are defined recursively by
	$$ H^{k,p}(\Gamma) := \{ \eta \in H^{k-1,p}(\Gamma) \mid \underline{D}_i \eta \in H^{k-1,p}(\Gamma), \ i = 1,...,n+1  \}, $$
	where $H^{0,p}(\Gamma) := L^p(\Gamma)$.
	These Sobolev spaces are known to be Banach spaces when equipped with norm,
	$$
	\| \eta \|_{H^{k,p}(\Gamma)} := \begin{cases}
		\left( \sum_{|\alpha| = 0}^{k}  \| \underline{D}^{\alpha} \eta \|_{L^p(\Gamma)}^p  \right)^{\frac{1}{p}}, & p \in [1, \infty),\\
		\max_{|\alpha|=1,...,k} \|\underline{D}^\alpha \eta \|_{L^\infty(\Gamma)}, & p = \infty,
	\end{cases}$$
	where we consider all weak derivatives of order $|\alpha|$.
	We use shorthand notation, $H^{k}(\Gamma) := H^{k,2}(\Gamma)$, for the case $p = 2$.
\end{definition}

Next we introduce some notation which will be used throughout.
\begin{notation}
	For a $\mathcal{H}^2 -$measurable set, $X \subset \mathbb{R}^{3}$, we denote the $\mathcal{H}^2$ measure of $X$ by
	$$ |X| := \mathcal{H}^2(X).$$
	
	For a function $\eta \in L^1(X)$ we denote the mean value of $\eta$ on $X$ by
	$$ \mval{\eta}{X} := \frac{1}{|X|} \int_X \eta .$$
\end{notation}

We refer the reader to \cite{aubin2012nonlinear,hebey2000nonlinear} for further details on Sobolev spaces defined on manifolds.

It can be shown that the pairs $(H^{m,p}(\Gamma(t)), \Phi_t)$ are compatible in the sense of \cite{alphonse2023function,alphonse2015abstract} for $m = 0,1,2$ and $p \in [1,\infty]$.
Compatibility of these spaces allows one to obtain Sobolev inequalities on $\Gamma(t)$ independent of $t$.\\

With these definitions, we can define time-dependent Bochner spaces.

\begin{definition}
	In the following we let $X(t)$ denote a Banach space dependent on $t$, for instance $H^{m,p}(\Gamma(t))$.
	The space $L^2_X$ consists of (equivalence classes of) functions
	\begin{gather*}
		\eta:[0,T] \rightarrow \bigcup_{t \in [0,T]} X(t) \times \{ t \},\\
		t \mapsto (\bar{\eta}(t), t),
	\end{gather*}
	such that $ \Phi_{-(\cdot)} \bar{\eta}(\cdot) \in L^2(0,T;X(0))$.
	We identify $\eta$ with $\bar{\eta}$.
	This space is equipped with norm
	\[ \|\eta\|_{L^2_X} = \left(\int_0^T \|\eta(t)\|_{X(t)}^2\right)^{\frac{1}{2}},\]
	for $\eta \in L^2_X$.
	If $X(t)$ is a family of Hilbert spaces then this norm is induced by the inner product
	$$ (\eta,\zeta)_{L^2_X} = \int_0^T (\eta(t),\zeta(t))_{X(t)},$$
	for $\eta,\zeta \in L^2_X$.
	In this case, as justified in \cite{alphonse2023function,alphonse2015abstract}, we make the identification $(L^2_{X})^* \cong L^2_{X^*}$.
    In particular for $X(t) = H^1(\Gamma(t))$ we shall write $(L^2_{H^1})^* \cong L^2_{H^{-1}}$, for $H^{-1}(\Gamma(t))$ the dual space of $H^1(\Gamma(t))$.\\
	
	One can similarly define $L^p_X$ for $p \in [1, \infty]$, which is equipped with a norm
	$$ \|\eta\|_{L^p_X} := \begin{cases}
		\left( \int_0^T \|\eta(t)\|^p_{X(t)} \right)^{\frac{1}{p}}, & p \in [1,\infty),\\
		\esssup{t \in [0,T]} \|\eta(t)\|_{X(t)}, & p = \infty.
	\end{cases}
	$$
	We refer the reader to \cite{alphonse2023function} for further details.
\end{definition}

We now state a transport theorem for quantities defined on an evolving surface.
Firstly, we use the following notation for bilinear forms,
\begin{align*}
	m_*(t;\hat{\eta}, \zeta) &:= \langle \hat{\eta}, \zeta \rangle_{H^{-1}(\Gamma(t)) \times H^1(\Gamma(t))},\\
	m(t;\eta,\zeta) &:=  \int_{\Gamma(t)} \eta \zeta, \\
	g(t;\eta,\zeta)  &:=  \int_{\Gamma(t)} \eta \zeta \gradg \cdot V(t),\\
	a(t;\eta,\zeta) &:= \int_{\Gamma(t)} \gradg \eta \cdot \gradg \zeta,
\end{align*}
where $\eta, \zeta \in H^1(\Gamma(t))$, $\hat{\eta} \in H^{-1}(\Gamma(t))$.
The argument in $t$ will typically be omitted.
For weakly differentiable functions we have the following result.
\begin{proposition}[{\cite[Lemma 5.2]{dziuk2013finite}}]
	\label{transport2}
	For all $\eta, \zeta \in H^1_{H^{-1}} \cap L^2_{L^2}$ the map $t \mapsto m(\eta(t), \zeta(t))$ is absolutely continuous and such that
	\[ \frac{d}{dt} m(\eta, \zeta) = m_*(\matdev \eta, \zeta) + m_*(\eta, \matdev \zeta) + g(\eta, \zeta).\]
	Moreover, if $\gradg \matdev \eta, \gradg \matdev \zeta \in L^2_{L^2}$ then $t \mapsto a(\eta(t), \zeta(t))$ is absolutely continuous and such that
	$$ \frac{d}{dt} a(\eta, \zeta) = a(\matdev \eta, \zeta) + a(\eta, \matdev \zeta) + b(\eta, \zeta).$$
	Here
	$$ b(\eta, \zeta) = \int_{\Gamma(t)} \mathbf{B}(V) \gradg \eta \cdot \gradg \zeta , $$
	where
	$$ \mathbf{B}(V) = \left( (\gradg \cdot V)\mathrm{id} - (\gradg V + (\gradg V)^T) \right).$$
\end{proposition}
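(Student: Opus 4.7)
The plan is to first establish both identities in a smooth setting via pullback to the reference surface $\Gamma_0$, and then to extend to the stated function spaces by a density argument based on the compatibility of the $(H^{m,p}(\Gamma(t)), \Phi_t)$ families.

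For the $m$-identity, assume first that $\eta, \zeta$ are smooth with smooth material derivatives. Using the parametrization $x = \Phi(x_0,t)$ one writes
\[ m(t;\eta,\zeta) = \int_{\Gamma_0} (\Phi_{-t}\eta)(\Phi_{-t}\zeta) J(t), \]
where $J(t) = J(\cdot,t)$ is the Jacobian of $\Phi(t)$ with respect to the surface measure. A direct computation (see e.g. the derivation of the Reynolds-type transport identity) gives $\partial_t J(t) = \Phi_{-t}(\gradg \cdot V) \cdot J(t)$. Differentiating under the integral, pushing forward and invoking the definition of $\matdev$ immediately yields
\[ \tfrac{d}{dt} m(\eta,\zeta) = m(\matdev \eta,\zeta) + m(\eta,\matdev \zeta) + g(\eta,\zeta), \]
which is the desired formula with the pairing $m_*$ replaced by $m$ in the smooth setting.

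For the $a$-identity one must in addition control the time derivative of $\gradg\eta$. The key tool is the commutator relation
\[ \matdev(\gradg \eta) = \gradg(\matdev \eta) - (\gradg V)^T \gradg\eta, \]
which one derives by differentiating the extrinsic formula $\gradg\eta = (I-\boldsymbol\nu\otimes\boldsymbol\nu)\nabla \eta^e$ along the flow. Applying the $m$-identity with $\eta$ and $\zeta$ replaced by their tangential gradients, then substituting this commutator for both $\matdev \gradg\eta$ and $\matdev\gradg\zeta$, the contributions involving $(\gradg V)^T$ from each slot together with the Jacobian term $(\gradg\cdot V)\mathrm{id}$ assemble exactly into the matrix
\[ \mathbf{B}(V) = (\gradg\cdot V)\mathrm{id} - \bigl(\gradg V + (\gradg V)^T\bigr), \]
giving the second identity in the smooth case.

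To pass to the weak setting, I would use the compatibility framework of \cite{alphonse2023function,alphonse2015abstract}: smooth functions with smooth material derivatives are dense in $H^1_{H^{-1}} \cap L^2_{L^2}$, and analogously in the space obtained by additionally requiring $\gradg \matdev \eta \in L^2_{L^2}$. Approximating $\eta,\zeta$ by such smooth sequences $\eta_n,\zeta_n$, both sides of the identity converge: the right-hand side via continuity of the bilinear forms $m_*,m,g,a,b$ combined with the convergence of $\matdev\eta_n, \matdev\zeta_n$ in $L^2_{H^{-1}}$ (respectively $L^2_{L^2}$ for the $a$-identity), while the left-hand side converges after integration in time because $\eta\zeta\in L^1_{L^1}$ and uniformly bounded test functions in time suffice to identify the distributional derivative; absolute continuity then follows from the integrability of the right-hand side. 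The main obstacle is the second identity: establishing the commutator formula for $\matdev \gradg$ requires care (the tangential projection itself depends on $t$ through $\boldsymbol\nu$), and one needs the density result at the stronger level $\gradg\matdev\eta \in L^2_{L^2}$, which is where the compatibility of the pair $(H^{2},\Phi_t)$ becomes essential.
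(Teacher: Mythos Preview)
The paper does not supply its own proof of this proposition: it is stated with an attribution to \cite[Lemma 5.2]{dziuk2013finite} and then used as a black box throughout. Your proposal follows precisely the standard route taken in that reference (and in the related framework papers \cite{alphonse2015abstract,alphonse2023function}): pull back to $\Gamma_0$, differentiate the Jacobian to obtain the $g$-term, use the commutator between $\matdev$ and $\gradg$ to produce $\mathbf{B}(V)$, and then pass from the smooth to the weak setting by density. So there is nothing to compare against in the present paper, and your outline is essentially the argument one finds in the cited source.

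One small remark on the commutator step: the identity $\matdev(\gradg\eta) = \gradg(\matdev\eta) - (\gradg V)^T\gradg\eta$ is only correct modulo a normal component (differentiating the projector $I-\boldsymbol\nu\otimes\boldsymbol\nu$ along the flow also produces terms involving $\matdev\boldsymbol\nu$). This does not affect the final formula because in $a(\eta,\zeta)$ one pairs two tangential vectors, so any normal contribution drops out; but it is worth flagging if you intend to write the argument out in full.
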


\section{Positive mobility functions}
In this section we consider a positive mobility function, $M(\cdot)$, such that $M \in C^0(\mbb{R})$ and
\[ 0 < M_1 \leq M(\cdot) \leq M_2, \]
for some positive constants $M_1, M_2$.
This will serve as a more regular mobility function which we can use to approximate the degenerate mobilities later (which vanish at $\pm 1$).
We also only consider the case of a regular potential with polynomial growth conditions, as in \cite{caetano2021cahn}.
More specifically we assume that $F_1, F_2$ are such that the following assumptions hold.
\begin{assumption}
\label{potential assumptions}
    \begin{enumerate}
        \item There exists some constant $\beta \in \mbb{R}$ such that $F(r) \geq \beta$ for all $r \in \mathrm{dom}(F)$.
        \item $F_1$ is convex.
        \item $F_2$ is such that there exists some $\alpha >0$ with
        \[ |F'_2(r)| \leq \alpha (|r| + 1). \]
        \item There exists some $q \in [1, \infty)$ and $\alpha > 0$ such that $|F_1'(r)| \leq \alpha |r|^q + \alpha$.
        \item There exists $\alpha > 0$ such that
        \[ |rF_1'(r)| \leq \alpha (F_1(r) + 1). \]
    \end{enumerate}
\end{assumption}
A standard example of such a potential is the quartic double well function given by
\[ F(r) = \frac{(1-r^2)^2}{4} = \frac{1+r^4}{4} - \frac{r^2}{2}. \]

Here we consider weak solutions to be a pair of functions $(u,w) \in L^\infty_{H^1} \cap H^1_{H^{-1}} \times L^2_{H^1}$ such that
\begin{gather}
	m_*(\matdev u, \phi) + g(u, \phi) + \hat{a}(M(u),w,\phi) = 0, \label{weakeqn1}\\
	m(w,\phi) = \varepsilon a(u, \phi) + \frac{1}{\varepsilon} m(F'(u), \phi), \label{weakeqn2}
\end{gather}
for all $\phi \in H^1(\Gamma(t))$, almost all $t \in [0,T]$, and such that $u(0) = u_0$ almost everywhere on $\Gamma_0$.
Here we have introduced a new trilinear form given by
\begin{gather*}
	\hat{a}(t; \phi, \psi, \chi) = \int_{\Gamma(t)} \phi \gradg\psi \cdot \gradg\chi,
\end{gather*}
where $\phi, \psi, \chi$ are sufficiently smooth functions.
We will omit the argument $t$ throughout, as we have above.\\

The main result of this section is the following.
\begin{theorem}
	\label{regular existence}
	There exists a solution pair $(u,w)\in L^\infty_{H^1} \cap H^1_{H^{-1}} \times L^2_{H^1}$ solving \eqref{weakeqn1}, \eqref{weakeqn2} for all $\phi \in H^1(\Gamma(t))$, almost all $t \in [0,T]$, and such that $u(0) = u_0$.
\end{theorem}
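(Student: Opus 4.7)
The plan is a Galerkin approximation in time-evolving finite-dimensional subspaces built from pushforwards of a fixed basis. I would take $\{\chi_i\}_{i \in \mathbb{N}}$ to be an $L^2(\Gamma_0)$-orthonormal, $H^1(\Gamma_0)$-orthogonal smooth basis (e.g.\ eigenfunctions of $-\lapg + I$ on $\Gamma_0$), set $V_N(t) := \mathrm{span}\{\Phi_t\chi_1, \ldots, \Phi_t\chi_N\}$, and seek $u^N(t), w^N(t) \in V_N(t)$ satisfying \eqref{weakeqn1}--\eqref{weakeqn2} tested against each $\Phi_t\chi_j$. Since $\matdev(\Phi_t\chi_i) = 0$, \eqref{weakeqn2} determines $w^N$ algebraically from $u^N$ through a uniformly invertible mass matrix, and \eqref{weakeqn1} becomes a Carathéodory system of ODEs in the coefficients of $u^N$. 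Continuity of $M$ and $F'$ with Assumption \ref{potential assumptions} gives local existence; I would initialize by the Galerkin projection $u^N(0) \to u_0$ in $H^1(\Gamma_0)$.

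The a priori estimates come from testing \eqref{weakeqn1} with $w^N$ and \eqref{weakeqn2} with $\matdev u^N \in V_N(t)$, then applying Proposition \ref{transport2} to the Ginzburg--Landau energy $E(u^N) := \frac{\varepsilon}{2} a(u^N, u^N) + \frac{1}{\varepsilon}\int_{\Gamma(t)} F(u^N)$. This yields
\[ \frac{d}{dt} E(u^N) + \int_{\Gamma(t)} M(u^N) |\gradg w^N|^2 = -g(u^N, w^N) + \frac{\varepsilon}{2} b(u^N, u^N) + \frac{1}{\varepsilon}\int_{\Gamma(t)} F(u^N) \, \gradg \cdot V. \]
The last two terms are controlled by $C(1 + E(u^N))$ using Assumption \ref{potential assumptions}(1),(5) and smoothness of $V$. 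The term $g(u^N, w^N)$ is handled by splitting $w^N = (w^N - \bar{w}^N) + \bar{w}^N$: Poincaré bounds the oscillation by $\|\gradg w^N\|_{L^2}$ (absorbable into the dissipation since $M \geq M_1$), while testing \eqref{weakeqn2} with $1$ gives $|\bar w^N| \leq C(1 + \|F'(u^N)\|_{L^1})$, which by the polynomial growth of $F'$ and the $H^1$-bound on $u^N$ is controlled by $C(1 + E(u^N))^\theta$. Gronwall then produces uniform bounds for $u^N$ in $L^\infty_{H^1}$ and $w^N$ in $L^2_{H^1}$, and reading \eqref{weakeqn1} as an equation for $\matdev u^N$ in $H^{-1}$ gives $\matdev u^N \in L^2_{H^{-1}}$ uniformly.

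For passage to the limit, I would invoke the Aubin--Lions-type compactness for evolving surfaces from \cite{alphonse2023function} to extract a subsequence with $u^N \to u$ strongly in $L^2_{L^2}$ and a.e., $u^N \stackrel{*}{\rightharpoonup} u$ in $L^\infty_{H^1}$, $\matdev u^N \rightharpoonup \matdev u$ in $L^2_{H^{-1}}$ and $w^N \rightharpoonup w$ in $L^2_{H^1}$. Continuity and boundedness of $M$ with dominated convergence give $M(u^N) \to M(u)$ strongly in $L^p_{L^p}$ for every $p < \infty$; against a fixed test function $\Phi_t \chi_k$, the product $M(u^N) \gradg \Phi_t\chi_k$ converges strongly in $L^2_{L^2}$ and pairs with $\gradg w^N \rightharpoonup \gradg w$ to give convergence of $\hat{a}(M(u^N), w^N, \Phi_t\chi_k)$. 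The potential term $F'(u^N) \to F'(u)$ in $L^1_{L^1}$ follows from a.e.\ convergence and equi-integrability via Assumption \ref{potential assumptions}(3),(4) combined with $L^p$-bounds on $u^N$. Once the limit holds for test functions in $V_k(t)$ for every $k$, density in $H^1(\Gamma(t))$ extends it to the full space, and $u(0) = u_0$ is recovered from the continuous embedding $H^1_{H^{-1}} \cap L^2_{H^1} \hookrightarrow C_{L^2}$.

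The main obstacle is the cross term $g(u^N, w^N)$ in the energy identity: one has no direct $L^2$-bound on $w^N$, only on its gradient, so the estimate hinges on the mean-value decomposition together with the growth hypothesis on $F'$ to bound $\bar{w}^N$. This is precisely what forces Assumption \ref{potential assumptions} and why the argument would break without polynomial control of $F'$. A secondary, more technical issue is verifying that the bilinear form $b(\cdot,\cdot)$ from Proposition \ref{transport2} and the geometric forcing $\gradg \cdot V$ produce terms dominated by $E(u^N)$ uniformly; this follows from the assumed $C^2$-control on $V$.
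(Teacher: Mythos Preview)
Your overall architecture (Galerkin scheme on pushforward eigenfunctions, energy identity via Proposition \ref{transport2}, compactness, limit passage) matches the paper. The gap is in the step you flag yourself as ``the main obstacle'': the treatment of $g(u^N,w^N)$ by the mean-value splitting does not close.

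Writing $w^N=(w^N-\bar w^N)+\bar w^N$, the oscillatory part is indeed absorbed by Poincar\'e and $M\ge M_1$. For the mean part, testing \eqref{galerkin2} with $1$ gives $|\bar w^N|\le C\|F'(u^N)\|_{L^1}$; using Assumption \ref{potential assumptions}(5) one has $\|F_1'(u^N)\|_{L^1}\le C(1+\int F_1(u^N))\le C(1+E(u^N))$, while $|g(u^N,1)|\le C\|u^N\|_{L^2}\le C(1+E(u^N))^{1/2}$. The product is therefore $C(1+E(u^N))^{3/2}$ (and $C(1+E)^{(q+1)/2}$ if you use Assumption \ref{potential assumptions}(4) directly). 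Either way the resulting differential inequality is genuinely superlinear, so ``Gronwall then produces uniform bounds'' is not correct: a Bihari-type inequality only yields existence up to a finite time independent of $N$, which is useless for passing to the limit on $[0,T]$. Your appeal to ``the $H^1$-bound on $u^N$'' is circular here, since that bound is precisely what the estimate is meant to establish.

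The paper resolves this differently. It writes $g(u^K,w^K)=m(w^K,P_2^K(u^K\gradg\cdot V))$ with $P_2^K$ the $L^2$-projection onto $V^K(t)$, and then substitutes for $w^K$ via the Galerkin equation \eqref{galerkin2}. This produces the term $\frac{1}{\varepsilon}m(F'(u^K),u^K\gradg\cdot V)$, which Assumption \ref{potential assumptions}(5) bounds linearly by $C(1+E)$, plus a remainder $\frac{1}{\varepsilon}m(F'(u^K),P_2^K(u^K\gradg\cdot V)-u^K\gradg\cdot V)$ controlled by $C\gamma\,E^{(q+1)/2}$, where $\gamma\to 0$ as $K\to\infty$ by the approximation property of $P_2^K$. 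The superlinear term thus carries an arbitrarily small coefficient, and the Bihari--LaSalle argument (cf.\ \cite[Lemma 5.4]{elliott2024navier}) then gives a bound on $[0,T]$ uniform in $K$. This projection-and-substitution trick, not the mean-value split, is the mechanism that makes the estimate global in time.
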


\subsection{Existence}
\subsubsection{Galerkin Approximation}
We proceed by Galerkin approximation, choosing a countable basis of functions given by $\Phi_t \psi_i$, where $\psi_i$ are eigenfunctions of the Laplace-Beltrami operator, $-\lapg$, on $\Gamma_0$.
We denote the finite dimensional subspaces spanned by these (pushforwards of) eigenfunctions as
\[ V^K(t) := \text{span}\{ \Phi_t \psi_i \mid i = 1,...,K \} \subset H^1(\Gamma(t)). \]
It is worth noting that by construction, these basis functions have vanishing material time derivative --- that is $\matdev \Phi_t \psi_i  = 0$ for all $i$.
We define a $H^1$-projection onto $V^K(t)$, denoted $P_K \phi$, as the unique solution of
\[ (P_K \phi, \psi^K)_{H^1(\Gamma(t))} = (\phi, \psi^K)_{H^1(\Gamma(t))}, \]
for all $\psi^K \in V^K(t)$.
Now we consider the approximate solutions, $(u^K, w^K)$, solving
\begin{gather}
	m(\matdev u^K, \phi) + g(u^K, \phi) + \hat{a}(M(u^K),w^K,\phi) = 0, \label{galerkin1}\\
	m(w^K,\phi) = \varepsilon a(u^K, \phi) + \frac{1}{\varepsilon} m(F'(u^K), \phi), \label{galerkin2}
\end{gather}
for all $\phi \in V^K(t)$, almost all $t \in [0,T]$, and such that $u(0) = P_K u_0$ almost everywhere on $\Gamma_0$.

\begin{lemma}
	There exists a short time solution $(u^K, w^K)$ solving \eqref{galerkin1}, \eqref{galerkin2} such that $u^K(0) = P_K u_0$.
\end{lemma}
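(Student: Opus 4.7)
The plan is to reduce the Galerkin system to a system of ordinary differential equations for the coefficients in the basis expansion, and then invoke a standard short-time ODE existence result.

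First I would write
\[ u^K(t) = \sum_{i=1}^K \alpha_i(t)\, \Phi_t \psi_i, \qquad w^K(t) = \sum_{i=1}^K \beta_i(t)\, \Phi_t \psi_i, \]
and choose the test function $\phi = \Phi_t \psi_j$ in \eqref{galerkin1}--\eqref{galerkin2}. Since $\matdev \Phi_t \psi_i = 0$ by construction, the material derivative reduces to $\matdev u^K = \sum_i \alpha_i'(t)\, \Phi_t \psi_i$, and this produces a coupled system
\[ \mathbf{M}^K(t)\, \boldsymbol{\alpha}'(t) + \mathbf{G}^K(t)\, \boldsymbol{\alpha}(t) + \mathbf{H}^K(t, \boldsymbol{\alpha}(t))\, \boldsymbol{\beta}(t) = 0, \]
\[ \mathbf{M}^K(t)\, \boldsymbol{\beta}(t) = \varepsilon\, \mathbf{A}^K(t)\, \boldsymbol{\alpha}(t) + \tfrac{1}{\varepsilon}\, \mathbf{F}^K(t, \boldsymbol{\alpha}(t)), \]
where the entries of the matrices correspond to $m$, $g$, $a$, and $\hat{a}(M(u^K),\cdot,\cdot)$ evaluated on the basis, and $\mathbf{F}^K(t,\boldsymbol{\alpha})_j := m(F'(u^K), \Phi_t \psi_j)$.

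Next I would record that the mass matrix $\mathbf{M}^K(t)$ is the Gram matrix of the linearly independent functions $\{\Phi_t \psi_i\}_{i=1}^K$ in $L^2(\Gamma(t))$, so it is symmetric positive definite for every $t$; by continuity of $\Phi$ and compatibility of the spaces $L^2(\Gamma(t))$, its smallest eigenvalue is bounded below uniformly on $[0,T]$. Hence $\mathbf{M}^K(t)^{-1}$ exists and is continuous on $[0,T]$. I would then use the second equation to eliminate $\boldsymbol{\beta}$ and substitute into the first, obtaining an autonomous-looking reduced ODE
\[ \boldsymbol{\alpha}'(t) = \mathcal{F}(t, \boldsymbol{\alpha}(t)), \]
where $\mathcal{F}$ is assembled from $\mathbf{M}^K, \mathbf{G}^K, \mathbf{A}^K, \mathbf{H}^K, \mathbf{F}^K$ and the continuous nonlinearities $M(\cdot)$ and $F'(\cdot)$ composed with $u^K = \sum_i \alpha_i \Phi_t \psi_i$.

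The final step is to observe that $\mathcal{F}$ is continuous in $(t, \boldsymbol{\alpha})$: continuity in $t$ follows from smoothness of the evolution $\Phi$ and of $V$, while continuity in $\boldsymbol{\alpha}$ follows from the continuity of $M$ and $F'$ together with the fact that the map $\boldsymbol{\alpha} \mapsto u^K$ is linear and bounded into $L^\infty(\Gamma(t))$ on the finite-dimensional subspace $V^K(t)$. Peano's theorem then yields a $C^1$ solution $\boldsymbol{\alpha}$ on some maximal interval $[0, T_K) \subset [0,T]$, with initial datum read off from $u^K(0) = P_K u_0$. Setting $\boldsymbol{\beta}$ via the algebraic relation above produces the desired pair $(u^K, w^K)$ on $[0,T_K)$. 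The main (mild) obstacle is verifying the uniform invertibility of $\mathbf{M}^K(t)$ and the continuity of $\mathcal{F}$; these are routine given the assumed $C^3$ regularity of $\Phi$ and the continuity assumptions on $M, F'$, and uniqueness/global extension of the ODE is deferred to the forthcoming a priori estimates.
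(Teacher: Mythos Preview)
Your proposal is correct and follows essentially the same approach as the paper: the paper's proof simply states that the result is standard and appeals to Peano's existence theorem after reducing to an ODE system, and the (commented-out) detailed version in the paper's source matches your argument almost line for line, including the basis expansion, the matrices for $m,g,a,\hat{a}$, the positive definiteness of the mass matrix, and the elimination of $\boldsymbol{\beta}$.
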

\begin{proof}
The proof of this result is standard, and follows from the usual appeal to standard ODE theory, namely the Peano existence theorem as the nonlinearities are continuous.
We refer the reader to \cite{caetano2021cahn,caetano2023regularization,elliott2024navier} for details on analogous results.
\end{proof}

Next we show uniform energy estimates to establish long time existence for sufficiently large $K$, and let us pass to the limit $K \rightarrow \infty$.
For this we note the following properties of our choice of basis functions (see also the discussion in \cite{caetano2021cahn,elliott2024navier}):
\begin{enumerate}
	\item The basis $(\psi_i)_i$ of $H^1(\Gamma_0)$ is such that $\psi_1$ is constant on $\Gamma_0$.
	This guarantees that $1 \in V^K(t)$ for all $K \in \mbb{N}, t \in [0,T]$.
	\item We define $P_2^K(t): L^2(\Gamma(t)) \rightarrow V^K(t)$ to be the $L^2$ projection defined by
	\[(P^K_2(t) \phi, \psi)_{L^2(\Gamma(t))} = (\phi, \psi)_{L^2(\Gamma(t))},\ \forall \psi \in V^K(t).\]
	Then for $\eta \in H^1(\Gamma_0)$ that
	\[ \|P_2^K(0) \eta \|_{H^1(\Gamma_0)} \leq C \|\eta\|_{H^1(\Gamma_0)},\] 
	and given $\gamma > 0$ there exists $K^* \in \mbb{N}$ such that for $K > K^*$,
	\[\|P_2^K(0)\eta - \eta\|_{L^2(\Gamma_0)} \leq \gamma \|\eta\|_{H^1(\Gamma_0)}. \]
\end{enumerate}
We notice that this second property implies that for $\eta \in H^1(\Gamma(t))$
\begin{align*}
	\|P_2^K(t)\eta - \eta\|_{L^2(\Gamma(t))} \leq C \gamma \|\eta\|_{H^1(\Gamma(t))},
\end{align*}
and likewise it is straightforward to see that the first property and compatibility of the pairs $(H^1(\Gamma(t)), \Phi_t)$, in the sense of \cite{alphonse2023function,alphonse2015abstract}, implies that
\[ \|P_2^K(t) \eta \|_{H^1(\Gamma(t))} \leq C \|\eta\|_{H^1(\Gamma(t))}.\] 
Lastly we note that $1 \in V^K(t)$ for all $K \in \mbb{N}, t \in [0,T]$ (since it is the first eigenfunction of the Laplace-Beltrami operator on a closed surface), and so one may use the constant function $1$ as a test function so that
\[ \int_{\Gamma(t)} u^K(t) = \int_{\Gamma_0} u_0,\]
We refer the reader to \cite{elliott2024navier} for details on these calculations.

\begin{lemma}\label{energyestimate}
	For sufficiently large $K$, the pair $(u^K,w^K)$ exists on $[0,T]$ and is such that
	\begin{align}
		\sup_{t \in [0,T]} \Ech[u^K;t] + \frac{1}{2}\int_0^T M(u^K)\|\gradg w^K \|_{L^2(\Gamma(t))}^2  \leq C,
	\end{align}
	for a constant, $C$, independent of $K$ but depending on $T$.
\end{lemma}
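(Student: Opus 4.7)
I will derive a Cahn-Hilliard type energy identity at the Galerkin level and show that every non-autonomous correction coming from the evolution of $\Gamma(t)$ is controlled by $C(1+\Ech[u^K;t])$ plus a multiple of the dissipation $\int M(u^K)|\gradg w^K|^2$ that can be absorbed. Grönwall, together with uniform $H^1(\Gamma_0)$-boundedness of $P_K u_0$, then closes the estimate on $[0,T]$, and standard ODE continuation extends the Peano solution to the full interval.

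\textbf{Energy identity.} Because $\matdev(\Phi_t\psi_i)=0$ for each basis function, one has $\matdev u^K\in V^K(t)$ whenever $u^K\in V^K(t)$, so both $w^K$ and $\matdev u^K$ are admissible test functions. I test \eqref{galerkin1} with $\phi=w^K$ and \eqref{galerkin2} with $\phi=\matdev u^K$ and subtract; applying Proposition \ref{transport2} to rewrite $a(\matdev u^K,u^K)=\tfrac12\frac{d}{dt}a(u^K,u^K)-\tfrac12 b(u^K,u^K)$, together with the scalar transport identity $m(F'(u^K),\matdev u^K)=\frac{d}{dt}\int_{\Gamma(t)}F(u^K)-\int_{\Gamma(t)}F(u^K)\gradg\cdot V$, produces
\[ \frac{d}{dt}\Ech[u^K;t]+\int_{\Gamma(t)}M(u^K)|\gradg w^K|^2 = -g(u^K,w^K)+\frac{\varepsilon}{2}b(u^K,u^K)+\frac{1}{\varepsilon}\int_{\Gamma(t)}F(u^K)\gradg\cdot V. \]

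\textbf{Estimating the right-hand side.} The $b$-term is controlled by $C\|\gradg u^K\|_{L^2}^2\leq C\Ech[u^K;t]$ using the $C^2$-bound on $V$, and the $F$-integral by $C\int_{\Gamma(t)}|F(u^K)|\leq C\Ech[u^K;t]+C$ since $F\geq \beta$ (Assumption \ref{potential assumptions}(1)). The $g$-term is the delicate one, because no a priori $L^2$-bound on $w^K$ is available. Setting $\bar w^K:=\mval{w^K}{\Gamma(t)}$, I split $g(u^K,w^K)=g(u^K,w^K-\bar w^K)+\bar w^K\,g(u^K,1)$. The fluctuation part is bounded by $C\|u^K\|_{L^2}\|\gradg w^K\|_{L^2}$ via Poincaré-Wirtinger on $w^K-\bar w^K$ and then absorbed by $\tfrac12\int M(u^K)|\gradg w^K|^2$ together with $\|u^K\|_{L^2}^2\leq C(1+\Ech)$; the latter follows from Poincaré and mass conservation $\int_{\Gamma(t)}u^K=\int_{\Gamma_0} u_0$, obtained by testing \eqref{galerkin1} with $\phi=1\in V^K(t)$. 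For the mean, testing \eqref{galerkin2} with $\phi=1$ gives $\bar w^K=\frac{1}{\varepsilon|\Gamma(t)|}\int_{\Gamma(t)}F'(u^K)$; combining Assumption \ref{potential assumptions}(3)--(5) with the 2D Sobolev embeddings $H^1(\Gamma(t))\hookrightarrow L^p(\Gamma(t))$ for every $p<\infty$ then yields $|\bar w^K\,g(u^K,1)|\leq C(1+\Ech[u^K;t])$.

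\textbf{Closing and main obstacle.} The estimates combine to give
\[ \frac{d}{dt}\Ech[u^K;t]+\frac12\int_{\Gamma(t)}M(u^K)|\gradg w^K|^2\leq C(1+\Ech[u^K;t]). \]
$H^1$-stability of $P_K$ gives $\|P_Ku_0\|_{H^1(\Gamma_0)}\leq \|u_0\|_{H^1(\Gamma_0)}$, and polynomial growth of $F$ combined with $P_Ku_0\to u_0$ in every $L^p(\Gamma_0)$ produces $\Ech[P_Ku_0;0]\leq C$ uniformly once $K$ is sufficiently large. Grönwall then yields the claimed estimate on any interval of existence, and the uniform $H^1$-bound on $u^K$ extends the finite-dimensional ODE solution to $[0,T]$. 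The crux of the argument is the last bound $|\bar w^K|\leq C(1+\Ech[u^K;t])$: naïve use of the polynomial growth (4) alone only yields $|\bar w^K|=O((\Ech)^{q/2})$, which is superlinear and incompatible with Grönwall, so Assumption \ref{potential assumptions}(5), tying $rF_1'(r)$ to $F_1(r)$, is precisely what is needed to reduce the bound to linear in the energy.
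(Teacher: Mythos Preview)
Your energy identity and the handling of the $b$-term, the $F$-integral, and the fluctuation part $g(u^K,w^K-\bar w^K)$ (absorbed via Poincar\'e and $M\geq M_1>0$) are all correct. The gap is in the mean contribution $\bar w^K\,g(u^K,1)$. Assumption~\ref{potential assumptions}(5) indeed gives $|\bar w^K|\leq C(1+\Ech[u^K;t])$, but you still need a bound on $|g(u^K,1)|=\big|\int_{\Gamma(t)}u^K\gradg\cdot V\big|$. Since $\gradg\cdot V$ is not constant, mass conservation $\int_{\Gamma(t)}u^K=\int_{\Gamma_0}u_0$ does not help here, and the best generic estimate is $|g(u^K,1)|\leq C\|u^K\|_{L^2}\leq C(1+\Ech)^{1/2}$. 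The product is therefore only $O\big((1+\Ech)^{3/2}\big)$, leading to $\frac{d}{dt}\Ech\leq C(1+\Ech)^{3/2}$, an ODE that blows up in finite time; neither Gr\"onwall nor Bihari--LaSalle will then produce a bound on all of $[0,T]$. Your closing paragraph correctly identifies that (5) is what reduces $|\bar w^K|$ from $O(\Ech^{q/2})$ to linear, but it overlooks that the other factor $g(u^K,1)$ contributes an additional $\Ech^{1/2}$.

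The paper avoids this precisely by \emph{not} splitting $w^K$ into mean and fluctuation. It writes $g(u^K,w^K)=m(w^K,P_2^K(u^K\gradg\cdot V))$ via the $L^2$-projection onto $V^K(t)$ and substitutes \eqref{galerkin2}, which produces the \emph{pointwise} product $\frac{1}{\varepsilon}\int_{\Gamma(t)}u^KF'(u^K)\,\gradg\cdot V$ directly. Assumption~\ref{potential assumptions}(5) then bounds the integrand by $C(F_1(u^K)+1)$, giving the linear control you were aiming for. The residual term $m(F'(u^K),P_2^K(u^K\gradg\cdot V)-u^K\gradg\cdot V)$ is superlinear, of order $\gamma\,\Ech^{(q+1)/2}$, but carries the small factor $\gamma$ from the approximation property of $P_2^K$; this is exactly why the paper requires $K$ large and invokes Bihari--LaSalle rather than plain Gr\"onwall. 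Your mean/fluctuation decomposition loses the pointwise pairing of $u^K$ with $F_1'(u^K)$ and with it the mechanism by which (5) makes the estimate close.
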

Here we are using the Ginzburg-Landau functional,
\begin{align}
	\Ech[\phi;t] := \int_{\Gamma(t)} \frac{\varepsilon|\gradg \phi|^2}{2} + \frac{1}{\varepsilon}F(\phi).
\end{align}
It is known that for a stationary, Euclidean domain that the Cahn-Hilliard equation (with a constant mobility) is the $H^{-1}$ gradient flow of the Ginzburg-Landau functional, we refer the reader to \cite{bansch2023interfaces} for details, but even in our present setting this functional is still useful in the analysis.
\begin{proof}
	We begin by testing \eqref{galerkin1} with $w^K$ so that
	\begin{align}
		m(\matdev u^K , w^K) + g(u^K,w^K) + \hat{a}(M(u^K),w^K,w^K) =0, \label{energypf1}
	\end{align}
	and \eqref{galerkin2} with $\matdev u^K \in V^K(t)$,
	\[m(\matdev u^K , w^K) = \varepsilon a(u^K,\matdev u^K) + \frac{1}{\varepsilon}m(F'(u^K),\matdev u^K).\]
	Using the transport theorem, Proposition \ref{transport2}, on the latter equality one finds
	\begin{align}
		m(\matdev u^K , w^K) = \frac{d}{dt}\frac{\varepsilon}{2} a(u^K,u^K) - \frac{\varepsilon}{2} b(u^K,u^K) + \frac{d}{dt} \frac{1}{\varepsilon} m(F(u^K),1) - \frac{1}{\varepsilon} g(F(u^K),1), \label{energypf2}
	\end{align}
	hence using \eqref{energypf2} in \eqref{energypf1} we find
	\begin{align}
		\frac{d}{dt} \Ech[u^K;t] + \hat{a}(M(u^K),w^K,w^K) +g(u^K, w^K)= \frac{\varepsilon}{2} b(u^K,u^K) + \frac{1}{\varepsilon} g(F(u^K),1). \label{energypf3}
	\end{align}
	It remains to bound these terms on the right.
	Firstly we observe from the smoothness assumption on $V$, and the lower bound on $F(\cdot)$, that
	\[\frac{\varepsilon}{2} b(u^K,u^K) + \frac{1}{\varepsilon} g(F(u^K),1) \leq C + C \Ech[u^K;t],  \]
	for a constant, $C$, independent of $t, K$.
	The problematic term here is $g(u^K,w^K)$, for which we use the same argument as in \cite{caetano2021cahn,elliott2024navier}.
	By definition of $P_2^K$ one has that
	\[ g(u^K, w^K) = m(w^K, P_2^K(u^K \gradg \cdot V)), \]
	where $ P_2^K(u^K \gradg \cdot V) \in V^K(t)$ so that one can use this in \eqref{weakeqn2} for
	\[ g(u^K, w^K) = \varepsilon a(u^K, P_2^K(u^K \gradg \cdot V)) + \frac{1}{\varepsilon} m(F'(u^K), P_2^K(u^K \gradg \cdot V)). \]
	It is then clear that 
	\begin{align*}
		g(u^K, w^K) \leq C \|u^K\|_{H^1(\Gamma(t))}^2 + \frac{1}{\varepsilon} g(F'(u^K), u^K) + \frac{1}{\varepsilon} |m(F'(u^K), P_2^K(u^K \gradg \cdot V) - u^K \gradg \cdot V)|,
	\end{align*}
	where it follows from the assumptions on $P_2^K$, and the growth assumptions on $F'$ (Assumption \ref{potential assumptions}), that for any $\gamma > 0$ we can choose some sufficiently large $K$ so that
	\begin{align*}
	    |g(u^K, w^K)| &\leq C + C \|\gradg u^K\|_{H^1(\Gamma(t))}^2 + C \int_{\Gamma(t)} F_1(u^K) + C \gamma \|\gradg u^K\|_{L^2(\Gamma(t))}^{q+1}\\
        &\leq C + C \Ech[u^K; t] + C \gamma \Ech[u^K; t]^{\frac{q+1}{2}}. 
	\end{align*}
	
	Hence we find that \eqref{energypf3} becomes
	\begin{align}
		\frac{d}{dt} \Ech[u^K;t] + \hat{a}(M(u^K),w^K,w^K) \leq C + C \Ech[u^K;t] + C \gamma \Ech[u^K; t]^{\frac{q+1}{2}}, \label{energypf4}
	\end{align}
	where $\gamma > 0$ can be taken arbitrarily small by taking $K$ sufficiently large.\\
	
	We wish to use the Bihari-LaSalle inequality \cite{bihari1956generalization} to conclude, but we require that $\Ech[u^K; t] \geq 0$ for this -- which is not the case.
	However, we know that $\Ech[u^K; t]$ is bounded below, and so we can add some constant, $\tilde{C}$, (independent of $t$) so that
	\[ \widetilde{\Ech}[u^K; t] := \Ech[u^K;t] + \tilde{C} \geq 0, \]
	and consider \eqref{energypf4} with $\Ech$ replaced by $\widetilde{\Ech}$.
	The result then follows as in the proof of \cite[Lemma 5.4]{elliott2024navier}.
	We note that this gives an upper bound in terms of $\widetilde{\Ech}[P_K u_0]$, which we now bound independently of $K$.
	Firstly we see that
	\[ {\Ech}[P_K u_0] \leq C + \frac{\varepsilon}{2} \| \gradg P_K u_0 \|_{L^2(\Gamma_0)}^2 + \frac{1}{\varepsilon}\|F_1(P_K u_0)\|_{L^1(\Gamma_0)} + \frac{1}{\varepsilon}\|F_2(P_K u_0))\|_{L^1(\Gamma_0)}, \]
	and by using the fact that $\|P_K u_0\|_{H^1(\Gamma_0)} \leq \|u_0\|_{H^1(\Gamma_0)}$ as well as the growth conditions for $F_1, F_2$ we see that
	\[ {\Ech}[P_K u_0] \leq C + C \|u_0\|_{H^1(\Gamma_0)}^2 + \|u_0\|_{H^1(\Gamma_0)}^{q+1}, \]
	and so our bound is independent of $K$.
\end{proof}
\subsubsection{Passage to the limit}
From this we have obtained uniform bounds for $u^K$ in $L^\infty_{H^1}$ and $w^K$ in $L^2_{H^1}$, and hence there exist limits $u \in L^\infty_{H^1}$ and $w \in L^2_{H^1}$ such that, up to a subsequence, we have
\begin{gather*}
	u^K \overset{*}{\rightharpoonup} u, \text{ weak-}* \text{ in } L^\infty_{H^1},\\
	w^K {\rightharpoonup} w, \text{ weakly in } L^2_{H^1}.
\end{gather*}
We cannot pass to the limit immediately however, since we do not have any uniform bounds on the time derivative $\matdev u^K$.
Hence to pass to the limit we firstly want to rewrite the equation with an alternate characterisation of the weak material time derivative.
Before we do this however, we observe that arguing along the lines of \cite{caetano2021cahn} Lemma 4.8 one can show that one has strong convergence $u^K \rightarrow u$ in $L^2_{L^2}$, which will be useful in passing to the limit for the nonlinear terms.\\

We recall the following generalisations of the dominated convergence theorem.
\begin{theorem}[{\cite[Theorem B.2]{caetano2021cahn}}]
	\label{generalised dct1}
	Let $(g_n)_{n \in \mbb{N}}$ be a uniformly bounded sequence in $L^{p}_{L^{r}}$, where $p,r \in [1,\infty]$.
	If there exists some $g \in L^p_{L^r}$ such that $g_n \rightarrow g$ almost everywhere then $g_n \rightharpoonup g$ weakly\footnote{In the case where $p = \infty$ this is weak-$*$ convergence.} in $L^p_{L^r}$.
\end{theorem}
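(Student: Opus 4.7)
The plan is to combine weak(-$*$) compactness with a uniqueness-of-limit argument driven by Egorov's theorem. First, since $(g_n)$ is uniformly bounded in $L^p_{L^r}$, for $p, r \in (1, \infty)$ reflexivity and Banach--Alaoglu yield a subsequence $g_{n_k}$ converging weakly in $L^p_{L^r}$ to some limit $h$; when $p$ or $r$ equals $\infty$ one instead extracts a weak-$*$ limit via the standard duality between $L^\infty$ and $L^1$, applied in the space or time variable as appropriate.

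The core step is identifying $h$ with the almost everywhere limit $g$. Because $\mathcal{G}_T$ has finite measure (a finite-time union of closed surfaces of uniformly bounded area), Egorov's theorem provides, for each $\delta > 0$, a set $E_\delta \subset \mathcal{G}_T$ with $|E_\delta| < \delta$ such that $g_{n_k} \to g$ uniformly on $\mathcal{G}_T \setminus E_\delta$. For a test function $\phi$ in the dual space $L^{p'}_{L^{r'}}$ I would decompose
\[ \int_{\mathcal{G}_T} g_{n_k}\phi = \int_{E_\delta} g_{n_k}\phi + \int_{\mathcal{G}_T \setminus E_\delta} g_{n_k}\phi, \]
control the complement integral by uniform convergence combined with $\phi \in L^1(\mathcal{G}_T \setminus E_\delta)$, and control the $E_\delta$ integral via H\"older's inequality by $\|g_{n_k}\|_{L^p_{L^r}} \|\phi \mathbf{1}_{E_\delta}\|_{L^{p'}_{L^{r'}}}$. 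Absolute continuity of the integral drives the second factor to zero as $\delta \to 0$, uniformly in $k$. Sending $k \to \infty$ and then $\delta \to 0$ gives $\int g_{n_k}\phi \to \int g\phi$ for every admissible $\phi$, so $h = g$ a.e. A standard subsequence argument then upgrades this to weak(-$*$) convergence of the full sequence: every subsequence of $(g_n)$ admits a further subsequence converging to the same limit $g$, hence $g_n \rightharpoonup g$ in $L^p_{L^r}$.

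The main obstacle is the endpoint case $p = 1$ or $r = 1$, where $L^1$ is not reflexive and uniformly bounded sequences need not admit weakly convergent subsequences --- the classical concentration example $n \mathbf{1}_{[0,1/n]}$ converges to $0$ almost everywhere but fails to converge weakly in $L^1$. Addressing this honestly requires invoking the Dunford--Pettis theorem together with uniform integrability of $(g_n)$, which would have to be verified separately. In the paper's applications the sequences one weakly extracts always sit in $L^p_{L^r}$ with $p, r > 1$, so this pathological endpoint does not actually arise in practice.
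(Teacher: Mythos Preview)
The paper does not prove this statement; it is quoted from \cite[Theorem B.2]{caetano2021cahn} and used as a black box, so there is no in-paper proof to compare against.

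Your argument is the standard one and is correct for $p, r \in (1, \infty]$: extract a weak(-$*$) convergent subsequence by reflexivity or Banach--Alaoglu, identify the limit with $g$ via Egorov and the splitting you describe, then upgrade to the full sequence by the usual subsequence principle. The step $\|\phi \mathbf{1}_{E_\delta}\|_{L^{p'}_{L^{r'}}} \to 0$ does hold whenever $p', r' < \infty$: approximate $\phi$ in $L^{p'}_{L^{r'}}$ by a bounded function and then use that $\|\mathbf{1}_{E_\delta}\|_{L^{p'}_{L^{r'}}} \to 0$, which follows from $|E_\delta| \to 0$ together with the uniform bound on $|\Gamma(t)|$.

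Your concern about $p = 1$ or $r = 1$ is not merely an obstacle to your method but a genuine defect in the statement as written: the result is \emph{false} at that endpoint, exactly by the concentration example $n\mathbf{1}_{[0,1/n]}$ you give. Dunford--Pettis would rescue it only under an additional uniform-integrability hypothesis that is not assumed. Either the range $[1,\infty]$ should read $(1,\infty]$, or the cited source carries a restriction not reproduced here. As you correctly observe, every application in the present paper has $p, r \in \{2, \infty\}$, so nothing downstream is affected.
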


\begin{theorem}[{\cite[Theorem 1.20]{evans2015measure}}]
	\label{generalised dct2}
	Let $(g_n)_{n \in \mbb{N}}$ be a sequence of $\mathcal{H}^2$-measurable functions on $\Gamma(t)$ (for a fixed $t$), and $g$ be some $\mathcal{H}^2$-measurable function on $\Gamma(t)$ such that
	\[ g_n \rightarrow g \text{ a.e. on } \Gamma(t).\]
	If there are functions $(h_n)_{n \in \mbb{N}} \subset L^r(\Gamma(t)), h \in L^r(\Gamma(t))$ such that $h_n \rightarrow h$ strongly in $L^r(\Gamma(t))$ and
	\[ |g_n|^p \leq |h_n|^r \text{ a.e. on } \Gamma(t),\]
	then $g_n, g \in L^p(\Gamma(t))$ and $g_n \rightarrow g$ strongly in $L^p(\Gamma(t))$.
\end{theorem}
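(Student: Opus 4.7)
The plan is to prove this as a standard consequence of Fatou's lemma, after arranging the a.e. convergence of the dominating sequence $(h_n)$ via a subsequence argument. The statement is a generalization of the classical dominated convergence theorem in two directions: (i) the dominating bound is of "power type" $|g_n|^p \le |h_n|^r$, and (ii) the dominating sequence is only convergent in $L^r$ rather than being a fixed majorant or convergent a.e.

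First I would establish that $g_n, g \in L^p(\Gamma(t))$. The former is immediate from $\int_{\Gamma(t)} |g_n|^p \le \int_{\Gamma(t)} |h_n|^r < \infty$. For the latter, since $h_n \to h$ in $L^r(\Gamma(t))$, extract a subsequence (not relabelled) with $h_n \to h$ pointwise $\mathcal{H}^2$-a.e.; combining this with $g_n \to g$ a.e.\ and passing to the limit in the pointwise inequality yields $|g|^p \le |h|^r$ a.e., hence $g \in L^p(\Gamma(t))$.

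Next, since the conclusion is an $L^p$ convergence statement, I would invoke the subsequence principle: it suffices to show that every subsequence of $(g_n)$ admits a further subsequence converging to $g$ in $L^p(\Gamma(t))$. Given such a subsequence, the corresponding $(h_{n_k})$ still converges to $h$ in $L^r$, so by the Riesz--Fischer theorem one can extract a further subsequence along which $h_{n_k} \to h$ a.e.\ on $\Gamma(t)$. This step is what handles the mild gap between strong $L^r$ convergence of $(h_n)$ (which is what we are given) and the pointwise convergence that Fatou's lemma wants to see.

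For the main step, set
\[ \psi_k := 2^{p-1}\bigl(|h_{n_k}|^r + |h|^r\bigr) - |g_{n_k} - g|^p. \]
The elementary inequality $(a+b)^p \le 2^{p-1}(a^p+b^p)$ for $p \ge 1$, together with $|g_{n_k}|^p \le |h_{n_k}|^r$ and $|g|^p \le |h|^r$, yields $\psi_k \ge 0$ a.e. Since $g_{n_k} \to g$ and $h_{n_k} \to h$ a.e., one has $\liminf_k \psi_k = 2^p |h|^r$ a.e. Fatou's lemma then gives
\[ 2^p \int_{\Gamma(t)} |h|^r \;\le\; \liminf_k \int_{\Gamma(t)} \psi_k \;=\; 2^p \int_{\Gamma(t)} |h|^r \;-\; \limsup_k \int_{\Gamma(t)} |g_{n_k} - g|^p, \]
where I used that $\|h_{n_k}\|_{L^r}^r \to \|h\|_{L^r}^r$ (a direct consequence of strong $L^r$ convergence) to evaluate the $\liminf$ of the positive terms as a genuine limit. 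Rearranging forces $\limsup_k \|g_{n_k}-g\|_{L^p}^p \le 0$, which is the desired convergence along the further subsequence. The subsequence principle then upgrades this to $g_n \to g$ in $L^p(\Gamma(t))$. I do not anticipate a real obstacle beyond correctly handling the a.e.\ extraction step and ensuring the inequality $|g|^p \le |h|^r$ is secured before applying Fatou.
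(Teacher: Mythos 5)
The paper does not prove this statement: it is quoted as a known result from Evans and Gariepy, Theorem~1.20, so there is no internal argument to compare against. Your proof is correct and is the standard one. The subsequence-of-a-subsequence reduction correctly bridges the gap between the given strong $L^r$ convergence of $(h_n)$ and the a.e.\ convergence that Fatou's lemma needs, and the auxiliary nonnegative sequence $\psi_k = 2^{p-1}(|h_{n_k}|^r + |h|^r) - |g_{n_k}-g|^p$ together with $\|h_{n_k}\|_{L^r}^r \to \|h\|_{L^r}^r$ is exactly the right device. One implicit hypothesis worth flagging is $p \ge 1$, which the convexity inequality $(a+b)^p \le 2^{p-1}(a^p+b^p)$ requires; this holds in the paper's sole application of the lemma (where $p = r = 2$), and for $p < 1$ one would instead use $(a+b)^p \le a^p + b^p$.
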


Now recall that one can obtain a subsequence of $u^K$ such that $u^K \rightarrow u$ pointwise almost everywhere on $\Gamma(t)$ for almost all $t \in [0,T]$.
Thus we use the above results to observe that as $M(u^K) \rightarrow M(u)$ and $F'(u^K) \rightarrow F'(u)$ pointwise almost everywhere, and $M(u) \in L^\infty_{L^\infty}, F'(u) \in L^2_{L^2}$, we see that
\begin{gather*}
	M(u^K) \overset{*}{\rightharpoonup} M(u), \text{ weak-}* \text{ in } L^\infty_{L^\infty},\\
	F'(u^K) {\rightharpoonup} F'(u), \text{ weakly in } L^2_{L^2},
\end{gather*}
where we pass to a subsequence if necessary.
Likewise, from the pointwise almost everywhere convergence of $u^K \rightarrow u$ on $\bigcup_{t \in [0,T]} \Gamma(t) \times \{t  \}$ one can also show strong convergence of the mobility
\[M(u^K) \rightarrow M(u), \text{ strongly in } L^2_{L^2},\]
which will be useful in the argument below.\\

We now use this to pass to the limit $K \rightarrow \infty$ of \eqref{galerkin1}.
Firstly we pick $\zeta \in C^\infty([0,T])$, and for $1 \leq i \leq K$ consider a test function $\zeta \Phi_t \psi_i \in H^1_{V^K}$ in \eqref{galerkin1} and integrate over $[0,T]$ so that
\begin{multline*}
	m(u^K(T), \zeta(T) \Phi_T \psi_i) + m(P_Ku_0,\zeta(0) \psi_i) + \int_{0}^T \zeta(t) \hat{a}(M(u^K),w^K, \Phi_t \psi_i)\\
	= \int_0^T \zeta'(t) m(u^K, \Phi_t \psi_i).
\end{multline*}
If we choose $\zeta \in C_c^\infty([0,T])$ then the above simplifies to
\[\int_{0}^T \zeta(t) \hat{a}(M(u^K),w^K, \Phi_t \psi_i) = \int_0^T \zeta'(t) m(u^K, \Phi_t \psi_i). \]
We now pass to the limit in the obvious way for the term on the right, but for the left term was express this as
\begin{multline*}
	\int_{0}^T \zeta(t) \hat{a}(M(u^K),w^K, \Phi_t \psi_i)  = \underbrace{\int_{0}^T \zeta(t)\hat{a}(M(u^K)- M(u),w^K, \Phi_t \psi_i)}_{\rightarrow 0 \text{ as } K \rightarrow \infty}\\
	+ \underbrace{\int_{0}^T \zeta(t) \hat{a}(M(u),w^K - w, \Phi_t \psi_i)}_{\rightarrow 0 \text{ as } K \rightarrow \infty}
	+\int_{0}^T \zeta(t) \hat{a}(M(u),w, \Phi_t \psi_i),
\end{multline*}
where the first and second terms on the right vanish as $M(u^K) \rightarrow M(u)$ in $L^2_{L^2}$, and $w^K \rightharpoonup w$ in $L^2_{H^1}$ respectively.
Hence we see that
\[\int_{0}^T \zeta(t) \hat{a}(M(u),w, \Phi_t \psi_i) = \int_0^T \zeta'(t) m(u, \Phi_t \psi_i), \]
and so by summing over suitably weighted sums of the above  form one obtains
\[ \int_{0}^T \zeta(t) \hat{a}(M(u),w, \Phi_t P_K\phi) = \int_0^T \zeta'(t) m(u, \Phi_t P_K\phi), \]
for any $\phi \in H^1(\Gamma_0)$.
Now noting that $\Phi_t P_K \phi \rightarrow \Phi_t \phi$ strongly in $L^2_{H^1}$, we pass to the limit $K \rightarrow \infty$ above for
\[ \int_{0}^T \zeta(t) \hat{a}(M(u),w, \Phi_t \phi) = \int_0^T \zeta'(t) m(u, \Phi_t \phi).\]
Hence it follows that $t \mapsto m(u, \Phi_t \phi)$ is weakly differentiable with
\[ \frac{d}{dt}  m(u, \Phi_t \phi) = -\hat{a}(M(u),w, \Phi_t \phi), \]
by definition of the weak derivative.
Clearly one finds that $\hat{a}(M(u),w,\cdot) \in L^2_{H^{-1}}$, and consequently finds that $\matdev u \in L^2_{H^{-1}}$ exists due to \cite[Lemma 2.5]{caetano2021cahn} (see also \cite[Lemma 3.5]{alphonse2015abstract}).
Hence by using Proposition \ref{transport2}, and the fact that $\matdev \Phi_t \phi = 0$, one finds that
\[ m_*(\matdev u, \Phi_t\phi) + g(u, \Phi_t\phi) + \hat{a}(M(u),w,\Phi_t\phi) = 0,\]
for all $\phi \in H^1(\Gamma_0)$ and almost all $t \in [0,T]$.
Noticing that our basis of $H^1(\Gamma(t))$ is formed precisely of functions of the form $\Phi_t\phi$ it is then straightforward to use the above equality to verify that \eqref{weakeqn1} holds.
Passing to the limit in \eqref{galerkin2} is similar but more straightforward as there are no time derivatives.
With these considerations one concludes that Theorem \ref{regular existence} holds.\\

As a final remark to close this subsection, we note that  standard elliptic regularity theory, see \cite{aubin2012nonlinear}, \eqref{weakeqn2} and the regularity of $w$ implies $u \in L^2_{H^2}$, and is such that
\[ \int_{0}^T \|u\|_{H^2(\Gamma(t))}^2 \leq C \int_0^T \left( \|w\|^2_{L^2(\Gamma(t)}+ \|F'(u)\|_{L^2(\Gamma(t))}^2 \right). \]

\subsection{Weak-strong uniqueness}
Here we prove a weak-strong uniqueness result under the assumption that $M \in C^{0,1}(\mbb{R})$.
The main result of this subsection is an improvement on the uniqueness result of \cite{barrett1999finite}, where the authors show a uniqueness result for a class of sufficiently regular solutions.
It is worth noting that our uniqueness result only holds in two dimensions, due to use of interpolation inequalities and the Brezis-Gallou\"et inequality, whereas the result of \cite{barrett1999finite} holds also in three dimensions.
Our approach only requires one strong solution and hence we acquire weak-strong uniqueness.\\

We begin by defining a suitable weak norm to be used in our uniqueness result.
For $z \in L^2_0(\Gamma(t)) := \{ \phi \in L^2(\Gamma(t)) \mid \int_{\Gamma(t)} \phi = 0 \}$, sufficiently function smooth $\xi : \Gamma(t) \rightarrow \mbb{R}$, and a fixed $t \in [0,T]$ we define\footnote{This can be generalised to $z \in H^{-1}(\Gamma(t))$ such that $\langle z, 1 \rangle = 0$, but we do not do this as our application requires further smoothness.} $\invoperator{\xi}z \in H^1(\Gamma(t))$ to be the unique solution of
\begin{align}
	\hat{a}(M(\xi),\invoperator{\xi}z,\phi) = m(z, \phi), \label{invoperator equation}
\end{align}
for all $\phi \in H^1(\Gamma(t)$, and such that $\int_{\Gamma(t)} \invoperator{\xi} z = 0$.
This clearly exists as the assumption $M(\cdot) \geq M_1 > 0$ implies this is a uniformly elliptic equation.
We now define a norm on $L^2_0(\Gamma(t))$ by
\[ \| z \|_\xi^2 := \hat{a}(M(\xi),\invoperator{\xi}z,\invoperator{\xi}z).\]
It is straightforward to show that
\[ M_1 (1+C_P^2) \|\invoperator{\xi}z\|_{H^1(\Gamma(t))}^2 \leq \|z\|_\xi^2 \leq M_2 \|\invoperator{\xi}z\|_{H^1(\Gamma(t))}^2,\]
where $C_P$ is the constant in the Poincar\'e inequality.
Clearly this operator is related to the inverse Laplacian, $\mathcal{G}$, as in \cite{caetano2021cahn,elliott2024navier,EllSalCHLog}, where for a constant $\lambda \in \mbb{R}$ one has
\[ \invoperator{\lambda} = \frac{1}{M(\lambda)} \mathcal{G}. \]
If one has that $\xi \in C^{0,1}(\Gamma(t))$, then the above PDE has Lipschitz continuous coefficients, and from elliptic regularity theory (for instance using \cite[Theorem 8.8]{gilbarg1977elliptic} and a localisation argument) one finds that $\invoperator{\xi}z \in H^2(\Gamma(t))$.
We show a bound for this in the following result, where we make the stronger assumption that $\xi \in H^2(\Gamma(t)) \hookrightarrow C^{0,1}(\Gamma(t))$.
\begin{lemma}
	Let $\xi \in H^2(\Gamma(t))$ and $z \in L^2_0(\Gamma(t))$.
	Then $\invoperator{\xi}z \in H^2(\Gamma(t))$ is such that
	\begin{align}
		\|\invoperator{\xi}z\|_{H^2(\Gamma(t))} \leq C  \|z\|_{L^2(\Gamma(t))} + C \left( \|\xi\|_{H^{1,4}(\Gamma(t))} + \|\xi\|_{H^{1,4}(\Gamma(t))}^2 \right) \|\invoperator{\xi}z\|_{H^{1}(\Gamma(t))}, \label{elliptic regularity}
	\end{align}
	for a constant $C$ independent of $t$.
\end{lemma}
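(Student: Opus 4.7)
The plan is to rewrite \eqref{invoperator equation} in strong form, apply standard $H^2$ elliptic regularity for the Laplace-Beltrami operator on the closed surface $\Gamma(t)$, and then control the lower-order coupling with $\xi$ by a Gagliardo-Nirenberg interpolation tailored to two-dimensional surfaces.

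First I would integrate by parts in \eqref{invoperator equation} to see that $\invoperator{\xi} z$ satisfies
\[ -\gradg \cdot (M(\xi) \gradg \invoperator{\xi} z) = z \]
distributionally. Expanding the divergence, which is legitimate since $\xi \in H^2(\Gamma(t)) \hookrightarrow C^{0,1}(\Gamma(t))$ in two surface dimensions and $M \in C^{0,1}(\mbb{R})$, and then dividing by $M(\xi) \geq M_1 > 0$, I obtain a Poisson equation
\[ -\lapg \invoperator{\xi} z = \frac{1}{M(\xi)} z + \frac{M'(\xi)}{M(\xi)} \gradg \xi \cdot \gradg \invoperator{\xi} z, \]
whose coefficients are bounded uniformly in $t$.

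Next I would invoke standard $H^2$ elliptic regularity for $\lapg$ on the closed surface $\Gamma(t)$ (cf.~\cite{aubin2012nonlinear}),
\[ \|\invoperator{\xi} z\|_{H^2(\Gamma(t))} \leq C \|\lapg \invoperator{\xi} z\|_{L^2(\Gamma(t))} + C \|\invoperator{\xi} z\|_{H^1(\Gamma(t))}, \]
with constant independent of $t$ by the compatibility of the pairs $(H^2(\Gamma(t)), \Phi_t)$. Combined with the above PDE, this reduces matters to estimating $\|\gradg \xi \cdot \gradg \invoperator{\xi} z\|_{L^2}$. For this term I would apply H\"older's inequality to bound it by $\|\xi\|_{H^{1,4}(\Gamma(t))} \|\gradg \invoperator{\xi} z\|_{L^4(\Gamma(t))}$, and then apply the two-dimensional Gagliardo-Nirenberg inequality
\[ \|\gradg \invoperator{\xi} z\|_{L^4} \leq C \|\gradg \invoperator{\xi} z\|_{L^2}^{1/2} \|\invoperator{\xi} z\|_{H^2}^{1/2} + C \|\gradg \invoperator{\xi} z\|_{L^2}. \]
A Young's inequality with small parameter on the product containing $\|\invoperator{\xi} z\|_{H^2}^{1/2}$ absorbs a fraction of $\|\invoperator{\xi} z\|_{H^2}$ into the left-hand side and leaves a contribution of order $C\|\xi\|_{H^{1,4}}^2 \|\invoperator{\xi} z\|_{H^1}$, while the lower-order summand in the Gagliardo-Nirenberg inequality is responsible for the linear-in-$\|\xi\|_{H^{1,4}}$ term. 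Collecting everything yields \eqref{elliptic regularity}.

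The main obstacle is that $\xi$ is only in $H^2$, so $\gradg \xi$ is not pointwise bounded and a crude $L^\infty \times L^2$ estimate of the cross term is unavailable; the Gagliardo-Nirenberg step is precisely what compensates for this, and it is also where the restriction to two surface dimensions enters. A secondary, more routine point is tracking the $t$-independence of the elliptic regularity constant, which is inherited from the compatibility of the function-space families $(H^m(\Gamma(t)), \Phi_t)$ recorded in the preliminaries.
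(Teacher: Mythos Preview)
Your proposal is correct and follows essentially the same route as the paper: both reduce to controlling $\|\lapg \invoperator{\xi}z\|_{L^2}$ via the $H^2$ regularity estimate for $\lapg$, bound the cross term $\gradg\xi\cdot\gradg\invoperator{\xi}z$ in $L^2$ by H\"older into $L^4\times L^4$, invoke the two-dimensional interpolation $\|\phi\|_{H^{1,4}}\leq C\|\phi\|_{H^1}^{1/2}\|\phi\|_{H^2}^{1/2}$, and absorb the resulting $\|\invoperator{\xi}z\|_{H^2}$ factor by Young's inequality. The only cosmetic difference is that the paper works with the integral identity $M_1\|\lapg\invoperator{\xi}z\|_{L^2}^2\leq\int_{\Gamma(t)}M(\xi)|\lapg\invoperator{\xi}z|^2$ rather than dividing the strong-form equation by $M(\xi)$ as you do; the residual $\|\invoperator{\xi}z\|_{H^1}$ term from the elliptic regularity estimate is absorbed into $C\|z\|_{L^2}$ via the basic energy bound for \eqref{invoperator equation}.
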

\begin{proof}
	To begin we recall from \cite{dziuk2013finite} that for a function $\phi \in H^2(\Gamma(t))$ one has that
    \[ \|\phi\|_{H^2(\Gamma(t))} \leq C \left( \|\lapg \phi\|_{L^2(\Gamma(t))} + \|\phi\|_{H^1(\Gamma(t))} \right), \]
	where one can show $C$ is independent of $t$.
	We aim to use this inequality with $\phi = \invoperator{\xi}z$, where the issue arises in bounding $\|\lapg \invoperator{\xi}z\|_{L^2(\Gamma(t))}$.
	To do this we observe that
	\begin{align*}
		M_1\|\lapg \invoperator{\xi}z\|_{L^2(\Gamma(t))}^2 &\leq \int_{\Gamma(t)} M(\xi) |\lapg \invoperator{\xi}z|^2\\
		&= \int_{\Gamma(t)} \gradg \cdot(M(\xi) \gradg \invoperator{\xi}z) \lapg \invoperator{\xi}z - \int_{\Gamma(t)} M'(\xi) \gradg \xi \cdot \gradg \invoperator{\xi}z \lapg \invoperator{\xi}z\\
		&= -\int_{\Gamma(t)} z \lapg \invoperator{\xi}z - \int_{\Gamma(t)} M'(\xi) \gradg \xi \cdot \gradg \invoperator{\xi}z \lapg \invoperator{\xi}z,
	\end{align*}
	where we have used the $H^2$ regularity of $\invoperator{\xi}z$ and the fact that $-\divg (M(\xi) \gradg \invoperator{\xi} z) = z$ almost everywhere on $\Gamma(t)$.
	Young's and H\"older's inequalities now yield
	\begin{align*}
		\|\lapg \invoperator{\xi}z\|_{L^2(\Gamma(t))}^2 &\leq C \left( \|z\|_{L^2(\Gamma(t))}^2 + \|\xi\|_{H^{1,4}(\Gamma(t))}^2 \|\invoperator{\xi}z\|_{H^{1,4}(\Gamma(t))}^2 \right).
	\end{align*}
	We now recall (see \cite{aubin2012nonlinear}) the interpolation inequality
	\begin{align}
		\|\phi\|_{H^{1,4}(\Gamma(t))} \leq C \|\phi\|_{H^1(\Gamma(t))}^{\frac{1}{2}} \|\phi\|_{H^2(\Gamma(t))}^{\frac{1}{2}}, \label{H14 interpolation}
	\end{align}
	where it can be shown (by compatibility of function spaces) that $C$ is independent of $t$ --- we refer the reader to \cite[Lemma 3.4]{olshanskii2022tangential} for a proof of this result.
	Using this interpolation inequality one finds
	\begin{align*}
		\|\lapg \invoperator{\xi}z\|_{L^2(\Gamma(t))}^2 &\leq C \left( \|z\|_{L^2(\Gamma(t))}^2 + \|\xi\|_{H^{1,4}(\Gamma(t))}^2 \|\invoperator{\xi}z\|_{H^{1}(\Gamma(t))} \|\invoperator{\xi}z\|_{H^{2}(\Gamma(t))} \right)\\
		&\leq C  \|z\|_{L^2(\Gamma(t))}^2 + C \left( \|\xi\|_{H^{1,4}(\Gamma(t))}^2 + \|\xi\|_{H^{1,4}(\Gamma(t))}^4 \right) \|\invoperator{\xi}z\|_{H^{1}(\Gamma(t))}^2,
	\end{align*}
    where we have used
    \[\|\invoperator{\xi}z\|_{H^2(\Gamma(t))} \leq C \left( \|\lapg \invoperator{\xi}z\|_{L^2(\Gamma(t))} + \|\invoperator{\xi}z\|_{H^1(\Gamma(t))} \right),\]
    and Young's inequality for the second inequality.
\end{proof}

We also require a further estimate on the material derivative of $\invoperator{\xi} z$, which is the content of the following lemma.

\begin{lemma}
	Let $\xi \in H^{1}_{L^2}$, and $z \in H^1_{H^{-1}} \cap L^2_{H^1 \cap L^2_0}$ be such that $(\matdev \xi) \gradg \invoperator{\xi}z \in L^2_{L^2}$.
	Then $\invoperator{\xi} z \in H^1_{H^1}$.
	That is $\matdev \invoperator{\xi} z$ exists as an element of $L^2_{H^1}$, and moreover we have a bound
	\begin{multline*}
		\int_{0}^T \|\matdev \invoperator{\xi}z\|_{H^1(\Gamma(t))}^2 \leq C \int_0^T \left( \|\matdev z \|_{H^{-1}(\Gamma(t))}^2 + \|z\|_{L^2(\Gamma(t))}^2  + \|\invoperator{\xi}z\|_{H^1(\Gamma(t))}^2 \right)\\
		+ C \int_0^T \|\matdev \xi \gradg \invoperator{\xi} z\|_{L^2(\Gamma(t))}^2.
	\end{multline*}
	where $C$ is independent of $\xi$.
\end{lemma}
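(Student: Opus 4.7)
Writing $w:=\invoperator{\xi}z$, the strategy is to formally differentiate the defining identity $\hat{a}(M(\xi),w,\phi)=m(z,\phi)$ in time to obtain an elliptic PDE for $\matdev w$, and then to test against a mean-zero shift of $\matdev w$ to extract the $H^{1}$-estimate. Taking $\phi=\Phi_{t}\tilde\psi$ with $\tilde\psi\in H^{1}(\Gamma_{0})$, so that $\matdev\phi=0$, and applying the surface transport identity together with the weighted analogue of Proposition \ref{transport2} (where $\matdev M(\xi)=M'(\xi)\matdev\xi$ and the $\mbf{B}(V)$-term encodes the commutator of $\matdev$ with $\gradg$), I expect to obtain
\begin{multline*}
\int_{\Gamma(t)} M(\xi)\,\gradg\matdev w\cdot\gradg\phi = m_*(\matdev z,\phi)+g(z,\phi)\\
-\int_{\Gamma(t)} M'(\xi)\,\matdev\xi\,\gradg w\cdot\gradg\phi-\int_{\Gamma(t)} M(\xi)\,\mbf{B}(V)\gradg w\cdot\gradg\phi
\end{multline*}
for all $\phi\in H^{1}(\Gamma(t))$, which is a uniformly elliptic equation for $\matdev w$.

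The right-hand side defines an element of $H^{-1}(\Gamma(t))$ (since $M'\in L^{\infty}$ by Lipschitz continuity of $M$, and by the hypothesis $(\matdev\xi)\gradg w\in L^{2}_{L^{2}}$), and it vanishes at $\phi\equiv 1$ because differentiating the mean-zero identity for $z$ yields $m_*(\matdev z,1)+g(z,1)=0$. Existence of $\matdev w\in L^{2}_{H^{1}}$ can then be established either by a difference-quotient argument in the spirit of the proof of Proposition \ref{transport2}, or by invoking the abstract framework of \cite{alphonse2023function,alphonse2015abstract} on the pullback $\Phi_{-t}w$ to identify its distributional time derivative with the Lax-Milgram solution of the above elliptic PDE having prescribed mean $\mval{\matdev w}{\Gamma(t)}=-\mval{w\,\gradg\cdot V}{\Gamma(t)}$ (forced by differentiating $\int_{\Gamma(t)} w=0$).

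With $\matdev w$ in hand, I test the identity with $\phi=\matdev w-\mval{\matdev w}{\Gamma(t)}$; the left-hand side is bounded below by $M_{1}\|\gradg\matdev w\|_{L^{2}(\Gamma(t))}^{2}$, while the four right-hand terms are routinely controlled using duality for $m_{*}(\matdev z,\phi)$, the $C^{1}$-bound on $V$ for the $g$- and $\mbf{B}$-terms, and $\|M'\|_{\infty}$ for the $\matdev\xi$-term. Each bound contains a factor $\|\gradg\phi\|_{L^{2}}=\|\gradg\matdev w\|_{L^{2}}$ which Young's inequality absorbs into the left. The Poincaré inequality on the mean-zero $\phi$ then upgrades the gradient bound into an $H^{1}$-bound for $\phi$; combining with the pointwise mean bound $|\mval{\matdev w}{\Gamma(t)}|\leq C\|w\|_{H^{1}(\Gamma(t))}$ obtained from the mean formula above and integrating over $[0,T]$ yields the stated inequality.

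The main obstacle is the rigorous justification of the formal time differentiation given the limited regularity of $\xi$ and the evolving weight $M(\xi)$; the standard transport calculus in Proposition \ref{transport2} does not directly cover quadratic forms with a rough nonlinear weight. A clean workaround is to regularize $(\xi,z)$ in time (e.g.\ by convolving their pullbacks to $\Gamma_{0}$ with a smooth mollifier), derive the identity and the bound for the regularized problem where the classical transport calculus applies, and then pass to the limit in the regularization parameter using weak compactness together with the uniform estimate itself to recover $\matdev w\in L^{2}_{H^{1}}$.
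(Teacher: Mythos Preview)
Your proposal is correct and follows essentially the same route as the paper: formally differentiate the defining identity for $\invoperator{\xi}z$ to obtain a uniformly elliptic equation for $\matdev\invoperator{\xi}z$, verify the compatibility condition at $\phi=1$, define $\matdev\invoperator{\xi}z$ as the Lax--Milgram solution, and extract the $H^1$-bound by testing. If anything, you are somewhat more careful than the paper about two points it leaves implicit --- the mean constraint on $\matdev w$ coming from differentiating $\int_{\Gamma(t)} w=0$, and the need for a regularisation argument to justify the formal transport calculus with the rough weight $M(\xi)$ --- so no gap is present.
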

\begin{proof}
	Firstly one considers $\phi \in H^1_{H^1}$ so that by formally differentiating \eqref{invoperator equation} in time one obtains
	\begin{multline*}
		\hat{a}(M'(\xi) \matdev \xi, \invoperator{\xi}z, \phi) + \hat{a}(M(\xi), \matdev \invoperator{\xi}z, \phi) + \hat{a}(M(\xi),\invoperator{\xi}z, \matdev \phi) + \hat{b}(M(\xi), \invoperator{\xi}z, \phi)\\
		= m_*(\matdev z, \phi) + m(\matdev \phi, z) + g(z, \phi),
	\end{multline*}
	where $M(\cdot)$ is differentiable almost everywhere due to Rademacher's theorem, and
	\[ \hat{b}(M(\xi), \eta, \zeta) = \int_{\Gamma(t)} M(\xi) \mathbf{B}(V) \gradg \eta \cdot \gradg \zeta, \]
	for
	\[\mathbf{B}(V) = \left( (\gradg \cdot V)\mathrm{id} - (\gradg V + (\gradg V)^T) \right),\]
	which follows similarly to Proposition \ref{transport2}.
	Now noting that 
	\[\hat{a}(M(\xi),\invoperator{\xi}z, \matdev \phi)=m(\matdev \phi, z),\]
	one finds that formally
	\begin{align*}
		\hat{a}(M(\xi), \matdev \invoperator{\xi}z, \phi)
		= m_*(\matdev z, \phi) + g(z, \phi) - \hat{a}(M'(\xi) \matdev \xi, \invoperator{\xi}z, \phi) - \hat{b}(M(\xi), \invoperator{\xi}z, \phi),
	\end{align*}
	for all $\phi \in L^2_{H^1}$ (after using a density argument).
	One uses this equality as a definition of $\matdev \invoperator{\xi} z$, and by noting that
	\begin{multline*}
		m_*(\matdev z, 1) + g(z, 1) - \hat{a}(M'(\xi) \matdev \xi, \invoperator{\xi}z, 1) - \hat{b}(M(\xi), \invoperator{\xi}z, 1)\\= \frac{d}{dt} m(z,1) -\frac{d}{dt} \hat{a}(M(\xi), \invoperator{\xi}z,1) = 0,
	\end{multline*}
	one finds that such a function does indeed exist.
	It is then straightforward to verify that $\matdev\invoperator{\xi}z$ is indeed the weak material derivative of $\invoperator{\xi}z$.
	The bound follows from the PDE defining $\matdev \invoperator{\xi} z$ after noting
	\[ |\hat{a}(M'(\xi) \matdev \xi, \invoperator{\xi}z, \phi)| \leq C \|(\matdev \xi) \gradg \invoperator{\xi} z\|_{L^2(\Gamma(t))} \|\gradg \phi\|_{L^2(\Gamma(t))}. \]
	
\end{proof}

We now recall two preliminary results which will be used in proving our uniqueness result.
The first of these results is the following nonlinear Gr\"onwall type result.
\begin{lemma}[{\cite[Lemma 2.2]{li2016tropical}}]
	\label{gronwalltype}
	Let $m_1, m_2, S$ be non-negative functions on $(0,T)$ such that $m_1, S \in L^1(0,T)$ and $m_2 \in L^2(0,T)$, with $S > 0$ a.e. on $(0,T)$.
	Now suppose $f,g$ are non-negative functions on $(0,T)$, $f$ is absolutely continuous on $[0,T)$, such that\footnote{Here we are using the notation $\log^+(x) := \max(0,\log(x))$.}
	\begin{align}
		f'(t) + g(t) \leq m_1(t)f(t) + m_2(t)\left( f(t)g(t) \log^+\left( \frac{S(t)}{g(t)} \right) \right)^{\frac{1}{2}},
	\end{align}
	holds a.e. on $(0,T)$, and $f(0) = 0$.
	Then $f \equiv 0$ on $[0,T)$.
\end{lemma}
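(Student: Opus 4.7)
The plan is to reduce the inequality to an Osgood-type differential inequality for $f$ alone, then invoke the Osgood/Bihari non-uniqueness argument in reverse: since $f(0)=0$ and the natural modulus $\omega(\xi)=\xi\log^+(C/\xi)$ satisfies the Osgood condition $\int_{0^+}\mathrm{d}\xi/\omega(\xi)=+\infty$, any differential inequality of the form $f'\leq b(t)\omega(f)$ with $b\in L^1(0,T)$ forces $f\equiv 0$. So the task is to convert the given inequality into exactly this form.

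First I would absorb the dissipative term $g$ on the left-hand side using $2\sqrt{ab}\le a+b$ with $a=g$ and $b=\tfrac14 m_2^2 f \log^+(S/g)$, which gives
\[ m_2\sqrt{f g\log^+(S/g)} \leq g + \tfrac14 m_2^2 f\log^+(S/g), \]
so that after cancelling the $g$ terms one obtains
\[ f'(t) \leq m_1(t)\,f(t) + \tfrac14 m_2(t)^2 f(t)\log^+(S(t)/g(t)). \]
Since the logarithm still involves $g$ rather than $f$, I would then either (i) treat the right-hand side of the original inequality as a function of $g\geq 0$ and pass to its supremum (the optimum is attained near $g\sim m_2^2 f\log^+(1/f)$ and the supremum is bounded by $C m_2^2 f \log^+(CS/f)$, a bona fide Osgood modulus in $f$), or (ii) perform a bootstrap: fix $\epsilon>0$, set $T_\epsilon^*=\inf\{t:f(t)>\epsilon\}$, and apply Young's inequality with parameter $\lambda\sim 1/\log^+(1/\epsilon)$ to derive an explicit bound on $f(T_\epsilon^*)$ strictly less than $\epsilon$ for small $\epsilon$, forcing $T_\epsilon^*=T$. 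Letting $\epsilon\downarrow 0$ concludes that $f\equiv 0$. A third equivalent route is simply to divide by $f$ on any sub-interval where $f>0$ and integrate; then sending the lower endpoint to a zero of $f$ produces $\log f(t)=-\infty$, provided the residual integrand is in $L^1(0,T)$.

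The main obstacle is choosing the Young split so that the resulting forcing genuinely vanishes and is not just bounded. Any naive application leaves a residue of order $\|S\|_{L^1}$ because of the elementary bound $g\log^+(S/g)\leq S/e$, which is positive independently of $g$; this kills the conclusion $f\equiv 0$. The correct approach weights the splitting by $\log^+(S/g)$ itself (equivalently, by $\log^+(1/f)$ in the bootstrap variant), trading the residue $S/e$ for a term of the form $S/\log^+(1/f)$ which is integrable in $t$ and vanishes as $f\to 0$. The hypothesis $m_2\in L^2(0,T)$ (rather than merely $L^1$) is what allows the $\log^+(1/\epsilon)$ factors arising from this adaptive Young's inequality to remain summable against $m_2^2$, so that the bootstrap closes as $\epsilon\downarrow 0$.
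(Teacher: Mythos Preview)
Your approach is genuinely different from the one the paper gives (in the appendix, for the integral analogue; the argument is stated to follow \cite{li2016tropical}). You aim for a direct reduction to an Osgood inequality $f'\le b(t)\,f\log^+(C/f)$ with $b\in L^1$, via optimisation over $g$ or an adaptive Young split. The paper instead replaces the logarithm by a small power using $\log^+ z \le z^\sigma/(\sigma e)$ for a parameter $\sigma\in(0,1)$, then applies Young's inequality with exponents $\tfrac{2}{1-\sigma},\tfrac{2}{1+\sigma}$ to absorb $g$ completely, arriving at a Bihari inequality with the sublinear modulus $\omega_\sigma(z)=(z/\sigma)^{1/(1+\sigma)}$. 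For each fixed $\sigma$ this does \emph{not} satisfy the Osgood condition, but the Bihari--LaSalle bound $f(t)\le \Lambda_\sigma^{-1}\bigl(\int_{t_0}^t b_\sigma\bigr)$ tends to zero as $\sigma\to 0^+$ on any short interval where $\int b_\sigma<1$, which forces $f\equiv 0$. The integrability of the coefficient is immediate here: $m_2^{2/(1+\sigma)}S^{\sigma/(1+\sigma)}\in L^1$ follows from $m_2\in L^2$, $S\in L^1$ by a single H\"older inequality.

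Your route (i) is correct in spirit but the supremum claim needs more care than you indicate: optimising $m_2\sqrt{fg\log^+(S/g)}-g$ over $g$ gives a maximum of order $\tfrac14 m_2^2 f\log^+\!\bigl(CS/(m_2^2 f)\bigr)$, with $m_2^2$ inside the logarithm rather than the $CS/f$ you write. One then needs a further decomposition---for instance $m_2^2 f\log^+\!\bigl(A/(m_2^2 f)\bigr) \le m_2^2 f\log^+(1/f) + CAf$, using $\log^+(xy)\le\log^+ x+\log^+ y$ and $x\log^+(1/x)\le 1/e$---to extract coefficients that are genuinely $L^1$ under the stated hypotheses alone; the naive split $\log^+(CS/f)\le\log^+(CS)+\log^+(1/f)$ would instead require $m_2^2\log^+ S\in L^1$, which is not implied by $m_2\in L^2$ and $S\in L^1$. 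So your sketch can be completed, but the bookkeeping is more delicate than suggested, whereas the paper's $\sigma$-family sidesteps it entirely.
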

Secondly we shall use the following evolving surface analogue of the Brezis-Gallou\"et-Wainger inequality.

\begin{lemma}[{\cite[Lemma 3.11]{elliott2024navier}}]
	\label{brezisgallouet}
    Let $(\Gamma(t))_{t \in [0,T]}$ be a $C^3$ evolving surface\footnote{In fact this inequality will hold for a $C^2$ evolving surface. This allows one to extend our treatment of positive mobility functions to a $C^2$ evolving surface, but for degenerate mobilities will one require a $C^3$ evolving surface due to higher order terms appearing in the existence proof.}.
	Then for $\phi \in H^2(\Gamma(t))$, one has 
	\begin{align}
		\|\phi\|_{L^\infty(\Gamma(t))} \leq C\|\phi\|_{H^1(\Gamma(t))} \left( 1 + \log\left( 1 + \frac{C\|\phi\|_{H^2(\Gamma(t))}}{\|\phi\|_{H^1(\Gamma(t))}} \right)^{\frac{1}{2}} \right),
	\end{align}
	for constants $C$ independent of $t$.
\end{lemma}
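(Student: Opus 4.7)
The plan is to reduce the inequality on the moving surface $\Gamma(t)$ to the fixed reference surface $\Gamma_0$ via pull-back, and then to establish the claim on the fixed closed $2$-manifold by a classical spectral truncation argument. Compatibility of the pairs $(H^{m,p}(\Gamma(t)),\Phi_t)$ for $m\in\{0,1,2\}$ and $p\in\{2,\infty\}$ supplies constants independent of $t$ such that $\|\phi\|_{X(\Gamma(t))}$ is equivalent to $\|\Phi_{-t}\phi\|_{X(\Gamma_0)}$ for $X\in\{L^\infty,H^1,H^2\}$. Hence it suffices to prove a Brezis--Gallou\"et--Wainger estimate on the fixed closed $C^3$ surface $\Gamma_0$, with any extra constants absorbed into $C$; the $t$-uniformity of the final statement follows automatically.

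To prove the inequality on $\Gamma_0$, take an $L^2$-orthonormal basis $\{\psi_k\}_{k\geq 1}$ of eigenfunctions of $-\lapg$ on $\Gamma_0$ with eigenvalues $0=\mu_1\leq\mu_2\leq\ldots$, and for $\phi=\sum_k c_k\psi_k\in H^2(\Gamma_0)$ and $N\in\mbb{N}$ split $\phi=\phi^{<}+\phi^{>}$, where $\phi^{<}=\sum_{k\leq N}c_k\psi_k$. Cauchy--Schwarz applied pointwise in $x\in\Gamma_0$ yields
\[ |\phi^{<}(x)|^2 \leq \|\phi\|_{H^1(\Gamma_0)}^2 \sum_{k\leq N}\frac{|\psi_k(x)|^2}{1+\mu_k}, \qquad |\phi^{>}(x)|^2 \leq \|\phi\|_{H^2(\Gamma_0)}^2 \sum_{k> N}\frac{|\psi_k(x)|^2}{\mu_k^2}. \]
Weyl's law $\mu_k\asymp k$ on a closed $2$-manifold together with the H\"ormander pointwise spectral-function estimate $\sum_{\mu_k\leq\mu}|\psi_k(x)|^2\leq C(1+\mu)$ (uniform in $x$), combined with Abel summation, produces the sharper bounds $\sum_{k\leq N}|\psi_k(x)|^2/(1+\mu_k)\leq C\log(1+N)$ and $\sum_{k>N}|\psi_k(x)|^2/\mu_k^2\leq C/N$. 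Summing these two contributions and optimising the cutoff by taking $N\asymp 1+(\|\phi\|_{H^2(\Gamma_0)}/\|\phi\|_{H^1(\Gamma_0)})^2$ then recovers precisely the logarithmic bound of the lemma.

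The main obstacle is the uniform pointwise spectral-function estimate on $\Gamma_0$; it is the ingredient that reflects the borderline failure of the embedding $H^1\hookrightarrow L^\infty$ in two dimensions and is responsible for the logarithmic correction. For a smooth closed surface this bound is classical, and the $t$-dependence on the moving surface is circumvented entirely by the pull-back to $\Gamma_0$, so no additional work is required at the level of $\Gamma(t)$ beyond the compatibility of function spaces invoked in the first step.
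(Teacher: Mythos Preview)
The paper does not prove this lemma; it is quoted from \cite[Lemma 3.11]{elliott2024navier}, so there is no in-paper argument to compare against and your proposal is assessed on its own merits.

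Your pull-back to $\Gamma_0$ via compatibility of $(H^{m,p}(\Gamma(t)),\Phi_t)$ is correct and delivers the $t$-uniformity cleanly, and the spectral-truncation scheme on $\Gamma_0$ is a legitimate route: the Abel-summation estimates on the low- and high-frequency sums are right once the pointwise bound $\sum_{\mu_k\leq\mu}|\psi_k(x)|^2\leq C(1+\mu)$ is in hand, and the optimisation of the cutoff $N$ produces exactly the logarithmic form claimed. The one loose end is regularity. H\"ormander's sharp local Weyl law is a $C^\infty$ statement, whereas $\Gamma_0$ is only $C^3$ (indeed $C^2$ per the footnote), and your remark that the bound is ``classical for a smooth closed surface'' does not close this gap. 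The softer upper bound you actually need can be recovered from on-diagonal heat-kernel estimates $p_t(x,x)\leq Ct^{-1}$, which hold well below $C^\infty$ regularity, or one can sidestep spectral theory entirely by a finite cover of coordinate charts together with the Euclidean Brezis--Gallou\"et inequality --- this needs only $C^2$ regularity of the surface and is likely closer to the argument in the cited reference. Either fix is routine, so the proposal is essentially sound.
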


\begin{definition}
	We call a pair of functions $(U,W)$ a strong solution pair if it a weak solution pair of \eqref{weakeqn1}, \eqref{weakeqn2} which additionally has that  $U \in L^2_{H^2}\cap H^1_{L^2}$.
\end{definition}
While we do not assume any further smoothness on $W$, one automatically obtains $W \in L^2_{H^2}$ from standard elliptic regularity theory as $\matdev U \in L^2_{L^2}$.

\begin{theorem}\label{weak strong uniqueness1}
	Let $(U,W)$ be a strong solution pair, and $(u,w)$ be a weak solution pair of \eqref{weakeqn1}, \eqref{weakeqn2} for the same initial data, $u_0$.
	If $F_2'$ is Lipschitz continuous, then $U = u$ almost everywhere on $\mathcal{G}_T$.
\end{theorem}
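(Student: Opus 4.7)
I would proceed via a weak-strong uniqueness argument centered on the weighted inverse norm $\|\cdot\|_U$ induced by the strong solution. Set $e := u - U$. Testing the subtracted mass equation \eqref{weakeqn1} against the constant $1 \in H^1(\Gamma(t))$ gives $\int_{\Gamma(t)} e = 0$ for all $t$, so $\invoperator{U} e$ is well-defined by \eqref{invoperator equation}. The regularity $U \in L^2_{H^2} \cap H^1_{L^2}$ together with the two preceding lemmas guarantees $\invoperator{U} e \in L^2_{H^2}$ and $\matdev \invoperator{U} e \in L^2_{H^1}$ (the hypothesis $(\matdev U)\gradg \invoperator{U}e \in L^2_{L^2}$ follows since $\matdev U \in L^2_{L^2}$ and $\gradg \invoperator{U}e \in L^\infty_{L^2}$). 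The key algebraic identities are $m(e, \invoperator{U} e) = \|e\|_U^2$ and $\hat{a}(M(U), \invoperator{U} e, \chi) = m(e,\chi)$ for any $\chi \in H^1(\Gamma(t))$.

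Subtract \eqref{weakeqn1} for $(u,w)$ and $(U,W)$ and test with $\invoperator{U}e$; use the algebraic identity to rewrite $\hat{a}(M(U), w-W, \invoperator{U}e) = m(e, w-W)$, and substitute for $m(e, w-W)$ from the subtracted \eqref{weakeqn2} tested with $e$. Then apply Proposition \ref{transport2} to $t \mapsto m(e, \invoperator{U}e) = \|e\|_U^2$ together with the identity from the preceding lemma for $\matdev \invoperator{U}e$ tested against $\invoperator{U}e$ to produce
\[ m_*(\matdev e, \invoperator{U}e) + g(e, \invoperator{U}e) = \frac{1}{2}\frac{d}{dt}\|e\|_U^2 + \frac{1}{2}\hat{a}(M'(U)\matdev U, \invoperator{U}e, \invoperator{U}e) + \frac{1}{2}\hat{b}(M(U), \invoperator{U}e, \invoperator{U}e). \]
Using convexity of $F_1$ (so $m(F_1'(u) - F_1'(U), e) \geq 0$ can be discarded), the Lipschitz assumption on $F_2'$, and the bound $\|e\|_{L^2}^2 = \hat{a}(M(U), \invoperator{U}e, e) \leq C \|e\|_U \|\gradg e\|_{L^2}$ followed by Young, one arrives at
\[ \frac{1}{2}\frac{d}{dt}\|e\|_U^2 + \varepsilon\|\gradg e\|_{L^2(\Gamma(t))}^2 \leq |\mathcal{R}_1| + |\mathcal{R}_2| + m_0(t)\|e\|_U^2 + \frac{\varepsilon}{4}\|\gradg e\|_{L^2(\Gamma(t))}^2, \]
where $\mathcal{R}_1 := \hat{a}(M(u) - M(U), w, \invoperator{U}e)$ is the critical mobility-coupling term, $\mathcal{R}_2 := \tfrac{1}{2}\hat{a}(M'(U)\matdev U, \invoperator{U}e, \invoperator{U}e)$, and $m_0 \in L^1(0,T)$ collects the easy $\hat{b}$ and Lipschitz $F_2'$ contributions via the smoothness of $V$.

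The main obstacle is controlling $\mathcal{R}_1$ and $\mathcal{R}_2$. For $\mathcal{R}_1$, Hölder with exponents $(4,2,4)$ and Lipschitz continuity of $M$ give $|\mathcal{R}_1| \leq C\|e\|_{L^4}\|\gradg w\|_{L^2}\|\gradg \invoperator{U}e\|_{L^4}$; the 2D Sobolev embedding yields $\|e\|_{L^4} \leq C\|e\|_{H^1}$ uniformly in $t$ (from the weak-solution energy estimate) and the interpolation \eqref{H14 interpolation} gives $\|\gradg \invoperator{U}e\|_{L^4} \leq C \|\invoperator{U}e\|_{H^1}^{1/2} \|\invoperator{U}e\|_{H^2}^{1/2}$. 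Inserting the elliptic-regularity bound \eqref{elliptic regularity}, using Poincaré for the mean-free $e$ on the closed surface, and noting that $\|U\|_{H^{1,4}(\Gamma(t))}^2 \in L^2(0,T)$ by the same interpolation applied to $U$ together with $U \in L^\infty_{H^1}\cap L^2_{H^2}$, one absorbs a portion into $\tfrac{\varepsilon}{4}\|\gradg e\|_{L^2}^2$ and shapes the remainder as $m_1(t)\|e\|_U^2 + m_2(t)\bigl(\|e\|_U^2\|\gradg e\|_{L^2}^2 \log^+(S(t)/\|\gradg e\|_{L^2}^2)\bigr)^{1/2}$. The term $\mathcal{R}_2$ is bounded analogously using $\matdev U \in L^2_{L^2}$, and Lemma \ref{brezisgallouet} is the device that dispenses with the remaining $L^\infty$-type factors of the strong solution at the cost of a logarithmic correction—this is exactly what both forces the two-dimensional restriction and produces the shape required by Lemma \ref{gronwalltype}. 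With $f(t) := \|e\|_U^2$, $g(t) := \varepsilon\|\gradg e\|_{L^2(\Gamma(t))}^2$, and $f(0) = 0$ (since $u(0) = U(0) = u_0$), Lemma \ref{gronwalltype} then yields $\|e\|_U \equiv 0$ on $[0,T)$, hence $u = U$ almost everywhere on $\mathcal{G}_T$.
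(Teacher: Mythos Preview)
Your overall architecture is the same as the paper's, but the treatment of the critical term $\mathcal{R}_1=\hat a(M(u)-M(U),w,\invoperator{U}e)$ does not close as you describe. With the $(4,2,4)$ H\"older split and the embedding $\|e\|_{L^4}\le C\|\gradg e\|_{L^2}$ you obtain, after inserting \eqref{H14 interpolation} and \eqref{elliptic regularity}, a contribution of the form $C\|\gradg w\|_{L^2}\,\|\gradg e\|_{L^2}^{3/2}\|e\|_U^{1/2}$ (plus lower-order pieces). Absorbing the $\|\gradg e\|_{L^2}^2$ part via Young forces the Gr\"onwall coefficient to contain $\|\gradg w\|_{L^2}^4$, which is \emph{not} in $L^1(0,T)$ for the weak solution (one only has $w\in L^2_{H^1}$). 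Sharper Ladyzhenskaya-type interpolations do not rescue this: the $(4,2,4)$ split always leaves strictly more than one power of $\|\gradg e\|_{L^2}$ multiplied by $\|\gradg w\|_{L^2}$, so the required integrability on $w$ exceeds $L^2$ in time. In particular no logarithm is produced by this route, so the remainder cannot be ``shaped'' into the form demanded by Lemma~\ref{gronwalltype}.

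The paper's device is different from what you describe: Brezis--Gallou\"et (Lemma~\ref{brezisgallouet}) is applied to the \emph{difference} $e$, not to the strong solution. One uses the $(\infty,2,2)$ split $|\mathcal R_1|\le C\|e\|_{L^\infty}\|\gradg w\|_{L^2}\|e\|_U$ and then $\|e\|_{L^\infty}\le C\|\gradg e\|_{L^2}\bigl(1+\log(\cdots)^{1/2}\bigr)$, which keeps exactly one power of $\|\gradg e\|_{L^2}$ against $\|\gradg w\|_{L^2}$ and manufactures the logarithm that Lemma~\ref{gronwalltype} needs; the $H^2$ bound on $e$ entering the logarithm comes from elliptic regularity applied to \eqref{weakeqn2} for both solutions. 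By contrast, $\mathcal R_2$ is handled without Brezis--Gallou\"et, exactly via \eqref{H14 interpolation} and \eqref{elliptic regularity} as you indicate. A secondary point: the claim that $(\matdev U)\gradg\invoperator{U}e\in L^2_{L^2}$ follows from $\matdev U\in L^2_{L^2}$ and $\gradg\invoperator{U}e\in L^\infty_{L^2}$ is false (that product is only $L^2_{L^1}$); this is precisely why the paper's argument is first given formally and then justified by approximating $U$ by smooth $\eta_n$ and passing to the limit with the integral form of the Gr\"onwall lemma.
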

\begin{proof}
	We present a formal argument, for the sake of clarity, which we then justify after the proof. 
	We begin by defining $\bar{U} = U - u$, and $\bar{W} = W - w$.
	It is then clear that
	\begin{gather}
		m_*(\matdev \bar{U}, \phi) + g(\bar{U}, \phi) + \hat{a}(M(U),W,\phi) - \hat{a}(M(u),w,\phi) = 0,\label{unique1}\\
		m(\bar{W},\phi) = \varepsilon a(\bar{U}, \phi) + 	\frac{1}{\varepsilon} m(F'(U) - F'(u), \phi), \label{unique2}
	\end{gather}
	for all $\phi \in H^1(\Gamma(t))$ and almost all $ t \in [0,T]$.
	We then write
	\[ \hat{a}(M(U),W,\phi) - \hat{a}(M(u),w,\phi) = \hat{a}(M(U), \bar{W}, \phi) + \hat{a}(M(U)-M(u),w, \phi), \]
	and choose $\phi = \invoperator{U}\bar{U}$, which we note is well defined, in \eqref{unique1}.
	This yields
	\begin{align}
		m_*(\matdev \bar{U}, \invoperator{U}\bar{U}) + g(\bar{U}, \invoperator{U}\bar{U}) + m(\bar{W}, \bar{U}) + \hat{a}(M(U) - M(u),w,\invoperator{U}\bar{U}) = 0, \label{unique3}
	\end{align}
	where we have used the definition of $\invoperator{U}\bar{U}$.\\
	
	Now we assume for ease of presentation that $\invoperator{U}\bar{U} \in H^1_{H^1}$ and one can compute
	\[m_*(\matdev \bar{U}, \invoperator{U}\bar{U}) + g(\bar{U}, \invoperator{U}\bar{U}) = \frac{d}{dt} m(\bar{U}, \invoperator{U}\bar{U}) - m(\bar{U}, \matdev \invoperator{U} \bar{U}), \]
	where one finds
	\begin{multline*}
		m(\bar{U}, \matdev \invoperator{U} \bar{U}) = \frac{1}{2} \frac{d}{dt} \hat{a}(M(U),\invoperator{U} \bar{U}, \invoperator{U} \bar{U} ) - \frac{1}{2} \hat{b}(M(U),\invoperator{U} \bar{U},\invoperator{U} \bar{U})\\
		- \frac{1}{2} \hat{a}(M'(U) \matdev U, \invoperator{U}\bar{U},\invoperator{U}\bar{U}),
	\end{multline*}
	and hence
	\begin{multline*}
		m_*(\matdev \bar{U}, \invoperator{U}\bar{U}) + g(\bar{U}, \invoperator{U}\bar{U}) = \frac{1}{2} \frac{d}{dt} \|\bar{U}\|_{U}^2 + \frac{1}{2}\hat{b}(M(U),\invoperator{U} \bar{U},\invoperator{U} \bar{U})\\
		+ \frac{1}{2} \hat{a}(M'(U) \matdev U, \invoperator{U}\bar{U},\invoperator{U}\bar{U}).
	\end{multline*}
	Next we test \eqref{unique2} with $\phi = \bar{U}$ for
	\[m(\bar{W},\bar{U}) = \varepsilon a(\bar{U}, \bar{U}) + 	\frac{1}{\varepsilon} m(F'(U) - F'(u), \bar{U}).\]
	Using the monotonicity of $F_1'$ one finds
	\[m(\bar{W},\bar{U}) \geq \varepsilon a(\bar{U}, \bar{U}) + 	\frac{1}{\varepsilon} m(F_2'(U) - F_2'(u), \bar{U}).\]
	
	Thus combining all of the above in \eqref{unique3} one finds
	\begin{multline}
		\frac{1}{2} \frac{d}{dt} \|\bar{U}\|_{U}^2 + \varepsilon \|\gradg \bar{U}\|_{L^2(\Gamma(t)}^2 \leq \frac{1}{2}|\hat{b}(M(U),\invoperator{U} \bar{U},\invoperator{U} \bar{U})| + \frac{1}{2} |\hat{a}(M'(U) \matdev U, \invoperator{U}\bar{U},\invoperator{U}\bar{U})|\\
		- \hat{a}(M(U) - M(u),w,\invoperator{U}\bar{U})- \frac{1}{\varepsilon} m(F_2'(U) - F_2'(u), \bar{U}). \label{unique4}
	\end{multline}
	It then follows that $|\hat{b}(M(U),\invoperator{U} \bar{U},\invoperator{U} \bar{U})| \leq C \|\bar{U}\|_{U}^2$, and using the Lipschitz continuity of $F_2'$
	\begin{align*}
		\frac{1}{\varepsilon} m(F_2'(U) - F_2'(u), \bar{U}) &\leq C \|\bar{U}\|_{L^2(\Gamma(t)}^2\\
		&= C \hat{a}(M(U), \invoperator{U} \bar{U}, \bar{U}) \leq C \|\bar{U}\|_U^2 + \frac{\varepsilon}{2} \|\gradg \bar{U}\|_{L^2(\Gamma(t))}^2. 
	\end{align*}
	To bound the term involving $\matdev U$ we find that
	\begin{align*}
		|\hat{a}(M'(U) \matdev U, \invoperator{U}\bar{U},\invoperator{U}\bar{U})| &\leq C \|\matdev U\|_{L^2(\Gamma(t))} \|\invoperator{U} \bar{U}\|_{H^{1,4}(\Gamma(t))}^2 \\
		& \leq C \|\matdev U\|_{L^2(\Gamma(t))} \|\invoperator{U} \bar{U}\|_{H^{1}(\Gamma(t))} \|\invoperator{U} \bar{U}\|_{H^{2}(\Gamma(t))}.
	\end{align*}
	where we have used \eqref{H14 interpolation}.
    One then bounds the $\|\invoperator{U} \bar{U}\|_{H^{2}(\Gamma(t))}$ term by recalling \eqref{elliptic regularity} so that
	\begin{align*}
		|\hat{a}(M'(U) \matdev U, \invoperator{U}\bar{U},\invoperator{U}\bar{U})| &\leq  C \|\matdev U\|_{L^2(\Gamma(t))} \|\invoperator{U} \bar{U}\|_{H^{1}(\Gamma(t))}\|\bar{U}\|_{L^2(\Gamma(t))}\\
		&+ C \left( \|U\|_{H^{1,4}(\Gamma(t))} + \|U\|_{H^{1,4}(\Gamma(t))}^2 \right) \|\matdev U\|_{L^2(\Gamma(t))} \|\invoperator{U} \bar{U}\|_{H^{1}(\Gamma(t))}^2
	\end{align*}
	and an application of Young's and Poincar\'e's inequalities then yields
	\begin{align*}
		|\hat{a}(M'(U) \matdev U, \invoperator{U}\bar{U},\invoperator{U}\bar{U})| &\leq \frac{\varepsilon}{4} \|\gradg \bar{U}\|_{L^2(\Gamma(t))}^2 + C \|\matdev U\|_{L^2(\Gamma(t))}^2 \| \bar{U}\|_{U}^2\\
		&+ C \left( \|U\|_{H^{1,4}(\Gamma(t))} + \|U\|_{H^{1,4}(\Gamma(t))}^2 \right)\|\matdev U\|_{L^2(\Gamma(t))} \| \bar{U}\|_{U}^2.
	\end{align*}
	
	The most difficult term to bound is
	\[|\hat{a}(M(U) - M(u),w,\invoperator{U}\bar{U})| \leq C \|\bar{U}\|_{L^\infty(\Gamma(t))} \|\gradg w \|_{L^2(\Gamma(t))} \|\bar{U}\|_{U}, \]
	where we have used the Lipschitz continuity of $M(\cdot)$ and H\"older's inequality.
	We now use Lemma \ref{brezisgallouet} to see that
	\begin{align*}
		\|\bar{U}\|_{L^\infty(\Gamma(t))} &\leq C\|\gradg \bar{U}\|_{L^2(\Gamma(t)} \left( 1 + \log\left( 1 + \frac{C\|\bar{U}\|_{H^2(\Gamma(t))}}{\|\bar{U}\|_{H^1(\Gamma(t))}} \right)^{\frac{1}{2}} \right)\\
		& \leq C\|\gradg \bar{U}\|_{L^2(\Gamma(t)} \left( 1 + \log\left( \frac{C \left(\|\gradg \bar{U}\|_{L^2(\Gamma(t))} + \|\bar{U}\|_{H^2(\Gamma(t))}\right)}{\|\gradg \bar{U}\|_{L^2(\Gamma(t))}} \right)^{\frac{1}{2}} \right).
	\end{align*}
	Using these inequalities, and Young's inequality where needed, in \eqref{unique4} one finds
	\begin{multline*}
		\frac{1}{2} \frac{d}{dt} \|\bar{U}\|_{U}^2 + \frac{\varepsilon}{8} \|\gradg \bar{U}\|_{L^2(\Gamma(t))}^2 \leq K(t) \|\bar{U}\|_{U}^2\\
		+ C\|\gradg \bar{U}\|_{L^2(\Gamma(t)} \|\gradg w\|_{L^2(\Gamma(t)} \|\bar{U}\|_U \log\left( \frac{C \left(\|\gradg \bar{U}\|_{L^2(\Gamma(t))} + \|\bar{U}\|_{H^2(\Gamma(t))}\right)}{\|\gradg \bar{U}\|_{L^2(\Gamma(t))}} \right)^{\frac{1}{2}},
	\end{multline*}
	where
	\begin{align*}
		K(t) &=  C \left( 1 + \| \gradg w\|_{L^2(\Gamma(t)}^2 + \|\matdev U\|_{L^2(\Gamma(t))}^2\right)\\
        &+ C\left( \|U\|_{H^{1,4}(\Gamma(t))}+ \|U\|_{H^{1,4}(\Gamma(t))}^2 \right)\|\matdev U\|_{L^2(\Gamma(t))},
	\end{align*}
	and we conclude by using Lemma \ref{gronwalltype}.
\end{proof}

\begin{remark}
	\begin{enumerate}
		\item It follows that $K \in L^1([0,T])$ from the same argument as used in \cite{elliott2024navier}.
		\item The above argument is only formal as we do not necessarily have that $\invoperator{U}\bar{U} \in H^1_{H^1}$ as we only know that $\matdev U \in L^2_{L^2}$.
		To mitigate this we instead consider $\invoperator{\eta_n}\bar{U}$ for a sequence of sufficiently smooth functions $\eta_n$ which converge to $U$ in an appropriate sense.
	\end{enumerate}
\end{remark}

To rigorously justify the previous result we require the following continuity result regarding the operator $\xi \mapsto \invoperator{\xi}$.
\begin{lemma}
	\begin{enumerate}
		\item Let $(\eta_n)_{n \in \mbb{N}} \subset L^{\infty}(\Gamma(t))$ and $\xi \in L^{\infty}(\Gamma(t))$ be such that $\eta_n \rightarrow \xi$ strongly in $L^{\infty}(\Gamma(t))$ as $n \rightarrow \infty$.
		Then for all $z \in L^2_0(\Gamma(t))$ we have strong convergence $\invoperator{\eta_n} z \rightarrow \invoperator{\xi}z$ in $H^1(\Gamma(t))$ as $n \rightarrow \infty$.
		\item Let $(\eta_n)_{n \in \mbb{N}} \subset L^{2p}(\Gamma(t))$ and $\xi \in L^{2p}(\Gamma(t))$ such that $\eta_n \rightarrow \xi$ strongly in $L^{2p}(\Gamma(t))$ as $n \rightarrow \infty$, for some $p \in (1,\infty)$.
		Then for all $z \in L^2_0(\Gamma(t))\cap H^2(\Gamma(t))$ we have strong convergence $\invoperator{\eta_n} z \rightarrow \invoperator{\xi}z$ in $H^1(\Gamma(t))$ as $n \rightarrow \infty$.
	\end{enumerate}
\end{lemma}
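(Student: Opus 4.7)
The plan is to estimate $d_n := \invoperator{\eta_n} z - \invoperator{\xi} z$ directly from the defining equations. Writing $s_n := \invoperator{\eta_n} z$ and $s := \invoperator{\xi} z$, both have vanishing mean, hence so does $d_n$, and subtracting the two instances of \eqref{invoperator equation} gives
\[ \hat{a}(M(\eta_n), d_n, \phi) = \int_{\Gamma(t)} (M(\xi) - M(\eta_n)) \gradg s \cdot \gradg \phi \]
for all $\phi \in H^1(\Gamma(t))$. Testing with $\phi = d_n$, using the uniform ellipticity $M(\eta_n) \geq M_1$ together with the Cauchy--Schwarz inequality, and finally the Poincar\'e inequality for zero-mean functions, one obtains
\[ \|d_n\|_{H^1(\Gamma(t))} \leq C \|(M(\eta_n) - M(\xi)) \gradg s\|_{L^2(\Gamma(t))}. \]
Thus the problem reduces to showing $(M(\eta_n) - M(\xi))\gradg s \to 0$ strongly in $L^2(\Gamma(t))$, noting that $\gradg s \in L^2(\Gamma(t))$ by Lax--Milgram applied to \eqref{invoperator equation}.

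For part (1), Lipschitz continuity of $M$ immediately gives $\|M(\eta_n) - M(\xi)\|_{L^\infty(\Gamma(t))} \to 0$, and the conclusion follows at once from the reduction above.

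For part (2), one can no longer bound $\|M(\eta_n) - M(\xi)\|_{L^\infty}$, which is the main technical point. My approach is to invoke the dominated convergence theorem: strong $L^{2p}$ convergence yields $\eta_n \to \xi$ pointwise a.e.\ along a subsequence, so continuity of $M$ gives $M(\eta_n) \to M(\xi)$ a.e.\ along that subsequence, while the uniform bound
\[ |M(\eta_n) - M(\xi)|^2 |\gradg s|^2 \leq 4 M_2^2 |\gradg s|^2 \in L^1(\Gamma(t)) \]
supplies the integrable majorant. Dominated convergence then yields $L^2$ convergence of the left-hand side to zero along the subsequence, and a standard subsequence-of-subsequence argument promotes this to convergence of the full sequence. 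I note that the $H^2$ hypothesis on $z$ does not appear essential in this route --- it plays no role in the argument above. It would, however, enable an alternative H\"older-based proof via $\|(M(\eta_n) - M(\xi))\gradg s\|_{L^2} \leq \|M(\eta_n) - M(\xi)\|_{L^{2p}} \|\gradg s\|_{L^{2p/(p-1)}}$, provided elliptic regularity (e.g.\ Meyers-type) could supply the requisite higher integrability of $\gradg s$; obtaining this for arbitrary $p \in (1,\infty)$ with merely $L^\infty$ coefficient $M(\xi)$ would be the obstruction to that alternative, which is precisely what the DCT route avoids.
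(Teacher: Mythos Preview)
Your proof is correct. For part~(1), your argument is essentially identical to what the paper's proof implies (the paper only writes out part~(2), remarking that part~(1) is ``similar but slightly easier as it does not require use of elliptic regularity results'').

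For part~(2), your approach is genuinely different from the paper's. The paper follows precisely the H\"older-based alternative you sketch at the end: it bounds
\[
\|(M(\eta_n) - M(\xi))\gradg s\|_{L^2(\Gamma(t))} \leq C\,\|\eta_n - \xi\|_{L^{2p}(\Gamma(t))}\,\|\gradg s\|_{L^{2p^*}(\Gamma(t))},
\]
using the Lipschitz continuity of $M$, and then obtains $\gradg s \in L^{2p^*}$ by asserting $\invoperator{\xi}z \in H^2(\Gamma(t))$ from $z \in H^2(\Gamma(t))$ via elliptic regularity, followed by the Sobolev embedding $H^2(\Gamma(t)) \hookrightarrow H^{1,2p^*}(\Gamma(t))$ (valid on a two-dimensional surface for any $p^* < \infty$). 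Your dominated-convergence argument bypasses this entirely by using only the uniform bound $|M| \leq M_2$, which furnishes the integrable majorant for free. This is more elementary, and as you correctly observe it does not use the $H^2$ hypothesis on $z$ at all --- indeed it only needs continuity and boundedness of $M$ rather than Lipschitz continuity. The trade-off is that the paper's route yields a quantitative rate in terms of $\|\eta_n - \xi\|_{L^{2p}}$, whereas your DCT route is non-quantitative; for the lemma as stated, either suffices.
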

\begin{proof}
	We prove only the second statement, as the first statement is similar but slightly easier as it does not require use of elliptic regularity results.
	By definition one has that
	\[ \hat{a}(M(\xi),\invoperator{\xi}z,\phi) = m(z, \phi) = \hat{a}(M(\eta_n),\invoperator{\eta_n}z,\phi), \]
	for all $\phi \in H^1(\Gamma(t))$ and so one readily finds that
	\[ \hat{a}(M(\eta_n),\invoperator{\eta_n}z - \invoperator{\xi} z,\phi) = \hat{a}(M(\xi) - M(\eta_n),\invoperator{\xi}z,\phi).\]
	Thus taking $\phi = \invoperator{\eta_n}z - \invoperator{\xi} z$, and using H\"older's inequality and the assumptions on $M(\cdot)$ we find
	\begin{align*}
	    \|\invoperator{\eta_n}z - \invoperator{\xi} z\|_{H^1(\Gamma(t))}^2 &\leq C \|(M(\xi) - M(\eta_n)) \gradg\invoperator{\xi} z\|_{{L}^2(\Gamma(t))} \|\invoperator{\eta_n}z - \invoperator{\xi} z\|_{H^1(\Gamma(t))} \\
        &\leq C\|\xi - \eta_n\|_{L^{2p}(\Gamma(t))} \|\gradg \invoperator{\xi} z\|_{L^{2p^*}(\Gamma(t))}\|\invoperator{\eta_n}z - \invoperator{\xi} z\|_{H^1(\Gamma(t))},
	\end{align*}
	for $p^*=\frac{p}{p-1}$. 
	Now as $z \in H^2(\Gamma(t))$ we find $\invoperator{\xi} z \in H^2(\Gamma(t))$ and so we use the Sobolev embedding $H^2(\Gamma(t)) \hookrightarrow H^{1,2p^*}(\Gamma(t)))$ for
	\[ \|\invoperator{\eta_n}z - \invoperator{\xi} z\|_{H^1(\Gamma(t))}\leq C\|\xi - \eta_n\|_{L^{2p}(\Gamma(t))} \| \invoperator{\xi} z\|_{H^2(\Gamma(t))}, \]
	whence the result follows.
\end{proof}

It is then straightforward to show the following corollary.
\begin{corollary}
	\begin{enumerate}
		\item Let $(\eta_n)_{n \in \mbb{N}} \subset L^{\infty}(\Gamma(t))$ and $\xi \in L^\infty(\Gamma(t))$ be such that $\eta_n \rightarrow \xi$ strongly in $L^\infty(\Gamma(t))$ as $n \rightarrow \infty$ for almost all $t \in [0,T]$.
		Then for all $z \in C^0_{L^2_0}$ $\|z\|_{\eta_n} \rightarrow \|z\|_\xi$ for almost all $t \in [0,T]$.
		\item  Let $(\eta_n)_{n \in \mbb{N}} \subset L^r_{L^{2p}}$ and $\xi \in L^r_{L^{2p}}$ such that $\eta_n \rightarrow \xi$ strongly in $L^r_{L^{2p}}$, for some given $r \in [1,\infty]$ and $p \in (1,\infty)$.
		Then for all $z \in L^{r^*}_{H^2}$, where $r^* = \frac{r}{r-1}$ we have
		\[ \begin{cases}
			\int_{0}^T \|z\|_{\eta_n}^r \rightarrow \int_0^T \|z\|_\xi^r, & r \in [1,\infty),\\
			\esssup{t \in [0,T]} \|z\|_{\eta_n} \rightarrow \esssup{t \in [0,T]} \|z\|_\xi, & r = \infty.
		\end{cases}\]
		
	\end{enumerate}   
\end{corollary}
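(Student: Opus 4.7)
The plan is to reduce both statements to the pointwise-in-$t$ convergences supplied by the preceding lemma, and then to upgrade them to integrated convergence via Lebesgue dominated convergence. The key algebraic input is the identity
\[
\|z\|_{\eta_n}^2 - \|z\|_\xi^2 \;=\; m\bigl(z,\, \invoperator{\eta_n}z - \invoperator{\xi}z\bigr),
\]
obtained by testing \eqref{invoperator equation} against $\invoperator{\eta_n}z$ and against $\invoperator{\xi}z$ and subtracting. Cauchy--Schwarz then yields the pointwise-in-$t$ bound
\[
\bigl|\|z\|_{\eta_n}^2 - \|z\|_\xi^2\bigr| \;\leq\; \|z\|_{L^2(\Gamma(t))} \, \|\invoperator{\eta_n}z - \invoperator{\xi}z\|_{L^2(\Gamma(t))}.
\]

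For statement (1), $z \in C^0_{L^2_0}$ guarantees $z(t) \in L^2_0(\Gamma(t))$ at every $t$, and at almost every $t$ one has $\eta_n(t) \to \xi(t)$ in $L^\infty(\Gamma(t))$, so part (1) of the preceding lemma delivers $\invoperator{\eta_n}z(t) \to \invoperator{\xi}z(t)$ strongly in $H^1(\Gamma(t))$. The displayed inequality then gives $\|z\|_{\eta_n} \to \|z\|_\xi$ at such $t$, which is exactly the conclusion.

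For statement (2), I would first invoke the standard fact that strong convergence in $L^r_{L^{2p}}$ passes, along a subsequence $(\eta_{n_k})$, to pointwise-in-$t$ strong convergence in $L^{2p}(\Gamma(t))$ for almost every $t$. Since $z(t) \in H^2(\Gamma(t))$ for a.e.\ $t$, part (2) of the preceding lemma delivers $\invoperator{\eta_{n_k}}z(t) \to \invoperator{\xi}z(t)$ in $H^1(\Gamma(t))$, and hence, via the identity above, $\|z\|_{\eta_{n_k}} \to \|z\|_\xi$ a.e.\ in $t$. To pass to the integral I would use the $n$-uniform, $t$-uniform bound
\[
\|z\|_{\eta_n}^2 \;\leq\; M_2 \|\invoperator{\eta_n}z\|_{H^1(\Gamma(t))}^2 \;\leq\; C \|z\|_{L^2(\Gamma(t))}^2,
\]
which follows by testing \eqref{invoperator equation} with $\phi = \invoperator{\eta_n}z$ itself and applying Poincar\'e (whose constant on $\Gamma(t)$ is $t$-uniform by compatibility of the Sobolev spaces). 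For $r \in [1,\infty)$, this dominating function is integrable on $[0,T]$ by the Sobolev embedding $H^2(\Gamma(t)) \hookrightarrow L^2(\Gamma(t))$ applied to the hypothesis on $z$, and dominated convergence yields convergence of $\int_0^T \|z\|_{\eta_{n_k}}^r$; the standard ``every subsequence has a further subsequence'' argument then promotes this to the full sequence. For $r = \infty$, the assumed $L^\infty_{L^{2p}}$-convergence is uniform in $t$, so the quantitative estimate from the proof of the preceding lemma yields uniform convergence of $\|z\|_{\eta_n}$ and hence convergence of essential suprema.

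The main item to keep track of is the bookkeeping of the subsequence extraction and the verification that the dominating function is in $L^1([0,T])$; neither step presents a substantive analytic difficulty, the heavy lifting having already been done in the preceding lemma.
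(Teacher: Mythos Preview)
The paper offers no proof beyond the assertion that the corollary is ``straightforward to show'', and your route is the natural one: the identity $\|z\|_{\eta_n}^2 - \|z\|_\xi^2 = m(z,\invoperator{\eta_n}z-\invoperator{\xi}z)$, the preceding lemma for pointwise-in-$t$ convergence, and dominated convergence (with subsequence extraction) for the time integral. Part (1) is handled cleanly.

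One bookkeeping point in part (2) deserves comment. Your dominating function is $C\|z(t)\|_{L^2(\Gamma(t))}^r$, and you need this in $L^1([0,T])$. The hypothesis $z\in L^{r^*}_{H^2}$ only gives $t\mapsto\|z(t)\|_{L^2(\Gamma(t))}\in L^{r^*}([0,T])$, which embeds into $L^r([0,T])$ on the bounded interval precisely when $r^*\geq r$, i.e.\ $r\leq 2$. For $r>2$ the stated hypotheses do not furnish integrability of your majorant, so the dominated convergence step is not justified as written. This looks like an imprecision in the exponent appearing in the corollary's conclusion rather than a flaw in your strategy: the paper only ever invokes the result with $r=2$ (where $r=r^*$ and your argument goes through verbatim), and replacing the exponent $r$ by $r^*$ in the conclusion would make both the statement and your proof correct for all $r\in[1,\infty)$.
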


Using these results we now justify the previous proof.
We consider smooth functions $\eta_n$ on $\mathcal{G}_T$ such that
\[ \eta_n \rightarrow U \text{ strongly in } L^2_{H^2} \cap H^1_{L^2} \cap L^\infty_{H^1} \text{ as } n \rightarrow \infty. \]
We observe that
\begin{multline*}
	m_*(\matdev \bar{U}, \invoperator{U}\bar{U}) + g(\bar{U}, \invoperator{U}\bar{U}) = \frac{d}{dt} m(\bar{U}, \invoperator{\eta_n}\bar{U}) - m(\bar{U}, \matdev \invoperator{\eta_n} \bar{U})\\
	+ m_*(\matdev \bar{U}, \invoperator{U}\bar{U} - \invoperator{\eta_n} \bar{U}) + g(\bar{U}, \invoperator{U}\bar{U} - \invoperator{\eta_n} \bar{U}),
\end{multline*}
where we find that $\matdev \invoperator{\eta_n} \bar{U} \in H^1_{H^1}$ for all $n \in \mbb{N}$.
From the above proof we recall that
\begin{multline*}
	m_*(\matdev \bar{U}, \invoperator{\eta_n}\bar{U}) + g(\bar{U}, \invoperator{\eta_n}\bar{U}) = \frac{1}{2} \frac{d}{dt} \|\bar{U}\|_{\eta_n}^2 + \frac{1}{2}\hat{b}(M(\eta_n),\invoperator{\eta_n} \bar{U},\invoperator{\eta_n} \bar{U})\\
	+ \frac{1}{2} \hat{a}(M'(\eta_n) \matdev \eta_n, \invoperator{\eta_n}\bar{U},\invoperator{\eta_n}\bar{U}).
\end{multline*}
and one can mirror the calculations above to find that
\begin{multline*}
	\frac{1}{2} \frac{d}{dt} \|\bar{U}\|_{\eta_n}^2 + \frac{\varepsilon}{8} \|\gradg \bar{U}\|_{L^2(\Gamma(t))}^2 \leq K_1(t)\|\bar{U}\|_{\eta_n}^2 + K_2(t) \|\bar{U}\|_{U}^2\\
	+ C\|\gradg \bar{U}\|_{L^2(\Gamma(t))} \|\gradg w\|_{L^2(\Gamma(t))} \|\bar{U}\|_U \log\left( \frac{C \left(\|\gradg \bar{U}\|_{L^2(\Gamma(t))} + \|\bar{U}\|_{H^2(\Gamma(t))}\right)}{\|\gradg \bar{U}\|_{L^2(\Gamma(t))}} \right)^{\frac{1}{2}},
\end{multline*}
where
\begin{gather*}
	K_1(t) = C\left( \|\eta_n\|_{H^{1,4}(\Gamma(t))} + \|\eta_n\|_{H^{1,4}(\Gamma(t))}^2  + \|\matdev \eta_n\|_{L^2(\Gamma(t))}\right)\|\matdev \eta_n\|_{L^2(\Gamma(t))},\\
	K_2(t) =  C \left( 1 + \| \gradg w\|_{L^2(\Gamma(t))}^2\right).
\end{gather*}
We now integrate the above in time, pass to the limit $n \rightarrow \infty$, and conclude by using an appropriate integral form of Lemma \ref{gronwalltype}, which we have included in the appendix (see Lemma \ref{gronwalltype2}).

\subsubsection{Comparison to other uniqueness results}

We now briefly compare Theorem \ref{weak strong uniqueness1} to existing uniqueness results in the literature --- namely \cite[Theorem 2.1]{barrett1999finite} and \cite[Theorem 1.2(B)]{ConGalGat24}.
Firstly, we compare to \cite[Theorem 2.1]{barrett1999finite}.
The uniqueness result of \cite{barrett1999finite} holds for weak solutions satisfying additional regularity properties:
\begin{gather*}
    \ddt{u} \in L^{\frac{8}{8-d}}(0,T;L^2(\Omega)),\\
    u \in L^{2^d}(0,T;H^2(\Omega)),\\
    w \in L^{\frac{8}{6-d}}(0,T;H^1(\Omega)),
\end{gather*}
where $\Omega \subset \mbb{R}^d$ for $d\leq 3$ is either a convex polyhedron or $\partial \Omega$ is $C^{1,1}$.
Inspecting the proof of this result one finds that this additional regularity is only required on one solution, so that the authors may prove a weak-strong uniqueness result for a class of sufficiently strong solutions.
Firstly, this result holds when one extends to 3 dimensional domains, whereas ours does not.
However, even in the 2 dimensional case this result requires a large amount of regularity, namely that $u \in L^4(0,T;H^2(\Omega))$, in comparison to our assumptions on a strong solution ($u \in L^2_{H^2}$).
Their result however does require less regularity on the time derivative in two dimensions, namely $\ddt{u} \in L^\frac{4}{3}(0,T;L^2(\Omega))$ in contrast to our assumption that $\matdev u \in L^2_{L^2}$.\\

Next we compare to the recent uniqueness result of \cite{ConGalGat24}, wherein the authors show that weak solutions are unique in two dimensions, provided that $\partial \Omega$ is $C^3$, and the mobility $M(\cdot)$ is $C^2$.
A key step in their uniqueness proof is establishing the equality \cite[Equation (3.16)]{ConGalGat24}, which in our notation would read
\begin{align*}
    \int_{\Omega} \invoperator{U} \ddt{\bar{U}} \bar{U} &= \frac{1}{2} \frac{d}{dt} \|\bar{U}\|_{U}^2 + \frac{1}{2} \int_{\Omega} \grad \mathcal{G} \ddt{U} M''(U) \grad U |\grad \invoperator{U} \bar{U}|^2\\
    &+ \int_{\Omega} \grad \mathcal{G} \ddt{U} M'(U)\left( D^2\invoperator{U}\bar{U} \grad \invoperator{U} \bar{U} \right),
\end{align*}
where $\mathcal{G}$ is the inverse (Neumann) Laplacian. 
This is justified by approximating by a sequence of smooth functions, similar to our considerations above --- in particular this makes use of the $C^3$ regularity of $\partial \Omega$ (see \cite[(3.21) and (3.22)]{ConGalGat24}).
One immediately notices where the $C^2$ regularity of $M(\cdot)$ is required.
One may expect that their proof immediately transfers to the setting of a sufficiently evolving surface provided $M(\cdot)$ is sufficiently smooth.
However, a subtle distinction between these two problems arises from the fact that while $ \langle \ddt{u},1 \rangle_{H^{-1}(\Omega),H^1(\Omega)} = 0$ on a stationary domain, one has that $\langle\matdev u,1 \rangle_{H^{-1}(\Gamma(t)),H^1(\Gamma(t))} \neq 0$ for a general evolving domain.
One may see this by testing \eqref{weakeqn1} with $\phi = 1$ so that
\[ \langle\matdev u,1 \rangle_{H^{-1}(\Gamma(t)),H^1(\Gamma(t))} = -\int_{\Gamma(t)} u \gradg \cdot V.  \]
Consequently one cannot define $\mathcal{G} \matdev u$ without some further consideration, which we leave for another work.
Two questions naturally arise from the comparison of our uniqueness proof:
\begin{enumerate}
    \item Under sufficient smoothness assumptions can one extend the uniqueness result of \cite{ConGalGat24} to show the uniqueness of weak solutions to \eqref{weakeqn1}, \eqref{weakeqn2}?
    \item For $\Omega$ (or respectively $\Gamma(t)$) a $C^2$ domain (or a $C^2$ evolving surface respectively), and $M(\cdot)$ a $C^{0,1}$ mobility function, are the weak solutions of \eqref{weakeqn1}, \eqref{weakeqn2} unique or is one limited to a weak strong uniqueness result like Theorem \ref{weak strong uniqueness1}?
\end{enumerate}
These are interesting issues to be resolved, among many others such as (weak-strong) uniqueness in higher dimensions (which to the authors knowledge only \cite[Theorem 2.1]{barrett1999finite} has addressed).

\section{Degenerate mobility functions}
We now use the results from the previous section to establish the existence of a weak solution to \eqref{cheqn} for a degenerate mobility function $M(\cdot)$.
We proceed, as in \cite{elliott1991generalized,elliott1996cahn}, by considering a regularised problem and passing to the limit in the regularisation variable.
\subsubsection*{Assumptions}
In this section we consider a degenerate mobility function, $M$, in the sense that $M(r)$ is positive when $r \in (-1,1)$ and $M(\pm 1) = 0$.
A standard example of this is the quadratic mobility, $M(r) = 1 -r^2$.
For our analysis we make the following assumption on the evolution of $\Gamma(t)$.

\begin{assumption}
\label{velocity assumption}
	We assume that the velocity, $V$, of $\Gamma(t)$ is such that $\gradg \cdot V \geq 0$ for all $(x,t) \in \mathcal{G}_T$.
\end{assumption}

This condition also appears in a recent preprint by the authors, concerning evolving surface finite element methods for the Cahn-Hilliard equation with a logarithmic potential \cite{EllSalCHLog}. 
One can equivalently express this condition in more geometric terms, as we see in the following lemma.
\begin{lemma}
	$\gradg \cdot V \geq 0$ if, and only if, for every measurable region $\Sigma(t) \subset \Gamma(t)$,
	\[ \frac{d}{dt} |\Sigma(t)| \geq 0. \]
\end{lemma}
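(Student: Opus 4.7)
The plan is to reduce this to the surface transport identity
\[ \frac{d}{dt} |\Sigma(t)| = \int_{\Sigma(t)} \gradg \cdot V, \]
which one obtains by applying Proposition \ref{transport2} with $\eta \equiv \zeta \equiv 1$ restricted to $\Sigma(t)$, or equivalently from the standard change-of-variables formula for the pushforward under $\Phi(t)$: writing $|\Sigma(t)| = \int_{\Sigma_0} J(t)$ where $J(t)$ is the Jacobian of $\Phi(t)$ restricted to $\Sigma_0$, one has $\frac{d}{dt} J(t) = J(t)\, \gradg \cdot V$, and pulling the derivative inside the integral then pushing back to $\Sigma(t)$ yields the identity. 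Throughout I interpret $\Sigma(t) = \Phi(t)(\Sigma_0)$ for a measurable $\Sigma_0 \subset \Gamma_0$, so that $\Sigma(t)$ evolves with the prescribed velocity $V$.

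With this identity in hand, the ($\Rightarrow$) direction is immediate: if $\gradg \cdot V \geq 0$ pointwise on $\mathcal{G}_T$, then integrating over any measurable $\Sigma(t)$ gives $\frac{d}{dt}|\Sigma(t)| \geq 0$ for all $t$.

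For the ($\Leftarrow$) direction I would argue by contradiction. Suppose there exists $(x_0, t_0) \in \mathcal{G}_T$ with $(\gradg \cdot V)(x_0, t_0) < 0$. Since $V \in C^2$, the map $(x,t) \mapsto (\gradg \cdot V)(x,t)$ is continuous on $\mathcal{G}_T$, so there is a surface neighbourhood $U \subset \Gamma(t_0)$ of $x_0$ on which $\gradg \cdot V(\cdot, t_0) < -\delta < 0$ for some $\delta > 0$. Taking $\Sigma_0 = \Phi(t_0)^{-1}(U) \subset \Gamma_0$ and letting $\Sigma(t) = \Phi(t)(\Sigma_0)$, the transport identity gives
\[ \left. \frac{d}{dt} |\Sigma(t)| \right|_{t=t_0} = \int_{\Sigma(t_0)} \gradg \cdot V(\cdot, t_0) \leq -\delta |U| < 0, \]
contradicting the assumed monotonicity of $|\Sigma(t)|$.

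The only subtle point is to justify the transport identity for a general measurable $\Sigma_0$; this is standard but relies on the fact that the evolution by $\Phi(t)$ preserves measurability and that the Jacobian evolves via $\frac{d}{dt} J = J \gradg \cdot V$. I do not anticipate this being an obstacle, given the $C^3$ regularity of the surface and the $C^2$ regularity of $V$ assumed in Section 2.
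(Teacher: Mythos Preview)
Your proposal is correct and follows essentially the same argument as the paper: both establish the transport identity $\frac{d}{dt}|\Sigma(t)| = \int_{\Sigma(t)} \gradg \cdot V$ and then invoke a localisation argument (the paper says ``the usual localisation arguments'', which you spell out explicitly via continuity and contradiction). Your remark that $\Sigma(t)$ should be interpreted as $\Phi(t)(\Sigma_0)$ for the time derivative to be meaningful is a useful clarification that the paper leaves implicit.
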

\begin{proof}
	Firstly we calculate that
	\[  \frac{d}{dt} |\Sigma(t)| = \frac{d}{dt} \int_{\Sigma(t)} 1 = \int_{\Sigma(t)} \gradg \cdot V. \]
	Hence, if $\gradg \cdot V \geq 0$ then clearly $\frac{d}{dt} |\Sigma(t)| \geq 0.$
	Conversely, if we assume $\frac{d}{dt} |\Sigma(t)| \geq 0$ for all regions $\Sigma(t)$, then we see
	\[ \int_{\Sigma(t)} \gradg \cdot V \geq 0,\]
	for all regions $\Sigma(t)$.
	As this holds for any $\Sigma(t)$, and $V$ is sufficiently smooth, the usual localisation arguments imply that $\gradg \cdot V \geq 0$ everywhere on $\Gamma(t)$.
\end{proof}
If we assume that the evolution of $\Gamma(t)$ is purely in the normal direction, that is to say $V = V_N \boldsymbol{\nu}$, then $\gradg \cdot V = HV_N$.
Hence an example of suitable evolution is if $V_N = f(H)$, as a function of the mean curvature, where $f(\cdot)$ is sufficiently smooth, and such that $xf(x) \geq 0$.
An explicit example of such an evolution is the inverse mean curvature flow, $f(H) = \frac{1}{H}$.
We refer the reader to \cite{bethuel1999geometric} for a discussion of geometric evolution equations of form $V_N = f(H)$.\\

This is not strictly necessary, as we shall see later, but it does allow one to obtain global in time solutions for a wider class of mobility functions.
Moreover, without this assumption one would expect that the existence result is only local in time, or global in time provided some further assumptions on the initial data.
We refer the reader to \cite[Proposition 5.1]{caetano2021cahn} for details.\\

We also make the following assumption which relates the convex part of the potential, $F_1$, to the mobility function, $M(\cdot)$.
\begin{assumption}
	We assume the mobility and potential functions are such that $M(r)F_1''(r) \in C^0([-1,1])$, and $r F_1'(r) \geq 0$, where $F_1$ is the convex part of the potential as before.
\end{assumption}
In particular we allow $F_1$ to be singular at $\pm 1$, provided the singularities are of a lower order than $M$.
An explicit example of a singular potential and an appropriate mobility function such that this assumption holds is the logarithmic potential,
\begin{align}
	F_1(r) = \frac{\theta}{2}\left( (1+r)\log(1+r)  + (1-r)\log(1-r)\right), \label{log potential}
\end{align}
for some fixed $\theta \in (0,1)$, and a mobility function
\[M(r) = (1-r^2)^k,\]
for $k \geq 1$ .
Moreover, as $F_1(\cdot)$ is convex one can often satisfy the condition $rF_1'(r) \geq 0$ by perturbing $F_1$ with  a quadratic function, and suitably redefining $F_1, F_2$.

\subsubsection*{Weak formulation}
Here we understand a weak solution to \eqref{cheqn} to be a function $u \in L^2_{H^{-1}} \cap L^2_{H^2} \cap H^1_{H^{-1}}$ such that
\begin{align}
	m_*(\matdev u, \phi) + g(u, \phi) - \varepsilon \hat{c}(u,M(u),\phi) - \frac{1}{\varepsilon}\hat{a}(M(u)F''(u),u,\phi) = 0, \label{weakeqn3}
\end{align}
for all $\phi \in H^2(\Gamma(t))$, almost all $t \in [0,T]$, and such that $u(0) = u_0$ almost everywhere on $\Gamma_0$.
Here the trilinear form $\hat{c}(\cdot,\cdot,\cdot)$ is given by
\[ \hat{c}(t;\phi,\psi,\chi) = \int_{\Gamma(t)} \lapg \phi \gradg \cdot (\psi \gradg \chi), \]
where we suppress the dependence on $t$ as usual.\\

This weak formulation follows from straightforward integration by parts on \eqref{cheqn}.
Unlike the weak formulation for a positive mobility, this has not been split into a system of two PDEs, and subsequently requires a more regular space of test functions.
Our main result of this section is the following existence result.

\begin{theorem}
	\label{thm: degenerate existence}
	Let Assumptions \ref{velocity assumption} and \ref{potential assumptions} hold, and $u_0 \in H^1(\Gamma_0)$ be such that
	\begin{align}
		\int_{\Gamma_0} F(u_0) + \Psi(u_0) \leq C < \infty, \label{degenerate initial data}
	\end{align}
	where $\Psi(\cdot)$ is given by \eqref{defn: psi}.
	Then there exists at least one function $u \in H^1_{H^{-1}} \cap L^\infty_{H^1} \cap L^2_{H^2}$ solving \eqref{weakeqn3} for all $\phi \in H^2(\Gamma(t))$, for almost all $t \in [0,T]$, and such that $u(0) = u_0$ almost everywhere on $\Gamma_0$.
\end{theorem}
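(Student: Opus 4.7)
I would follow the Elliott--Garcke regularisation strategy. First, replace the degenerate mobility by a non-degenerate approximation $M_\delta(r) := M(T(r)) + \delta$, $\delta \in (0,1)$, where $T$ truncates to $[-1,1]$; if $F_1$ is singular (as in the logarithmic case \eqref{log potential}) regularise it simultaneously to a polynomial-growth $F_1^\delta$, synchronised so that $M_\delta F_1^{\delta,\prime\prime}$ is uniformly bounded and converges pointwise to $MF_1''$. Theorem \ref{regular existence}, together with the elliptic regularity observation at the end of Section 3, provides a solution pair $(u^\delta,w^\delta) \in (L^\infty_{H^1} \cap H^1_{H^{-1}} \cap L^2_{H^2}) \times L^2_{H^1}$ to the regularised problem.

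\textbf{Uniform estimates.} The heart of the argument consists of two $\delta$-uniform bounds. Lemma \ref{energyestimate} yields the energy bound
\[\sup_t \Ech[u^\delta;t] + \int_0^T \hat{a}(M_\delta(u^\delta), w^\delta, w^\delta) \leq C.\]
The crucial new ingredient is an entropy estimate. Let $\Psi_\delta$ be the convex function with $\Psi_\delta''(r)=1/M_\delta(r)$. Testing the equation (at the Galerkin level, via a bounded smoothing of $\Psi_\delta'$) and using the transport identity, the cancellation $M_\delta \Psi_\delta'' = 1$, and integration by parts, I would obtain
\[\frac{d}{dt}\int_{\Gamma(t)} \Psi_\delta(u^\delta) + \varepsilon \int (\lapg u^\delta)^2 + \frac{1}{\varepsilon}\int F_1''(u^\delta)|\gradg u^\delta|^2 \leq R_\delta(t),\]
where $R_\delta(t)$ collects the geometric term $\int (\Psi_\delta(u^\delta) - u^\delta \Psi_\delta'(u^\delta))\gradg \cdot V$ and a contribution involving $F_2''$. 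Convexity of $\Psi_\delta$ gives $\Psi_\delta(r) - r \Psi_\delta'(r) \leq \Psi_\delta(0)$, so Assumption \ref{velocity assumption} ($\gradg \cdot V \geq 0$) bounds the first term by a constant; the $F_2''$ term is absorbed by the energy since $F_2''$ is bounded. Combined with \eqref{degenerate initial data}, this produces uniform bounds on $u^\delta$ in $L^\infty_{H^1}\cap L^2_{H^2}$ and on $\Psi_\delta(u^\delta)$ in $L^\infty_{L^1}$.

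\textbf{Passage to the limit.} Writing the flux as $M_\delta(u^\delta)\gradg w^\delta = M_\delta^{1/2}(u^\delta)\cdot M_\delta^{1/2}(u^\delta)\gradg w^\delta$ shows it is uniformly bounded in $L^2_{L^2}$, so \eqref{cheqn} yields $\matdev u^\delta$ uniformly bounded in $L^2_{H^{-1}}$. Aubin--Lions compactness and extraction then give $u^\delta \to u$ strongly in $L^2_{L^2}$ and almost everywhere on $\mathcal{G}_T$. The entropy bound combined with $\Psi_\delta(r)\to +\infty$ as $|r|\to 1$ forces $|u| \leq 1$ almost everywhere. I would pass to the limit in \eqref{weakeqn3} term by term: for $\varepsilon \hat{c}(u^\delta, M_\delta(u^\delta), \phi) = \varepsilon \int \lapg u^\delta \, \gradg \cdot (M_\delta(u^\delta) \gradg \phi)$, weak convergence $\lapg u^\delta \rightharpoonup \lapg u$ in $L^2_{L^2}$ combined with strong convergence $M_\delta(u^\delta) \to M(u)$ (Theorem \ref{generalised dct2}, using $|u|\leq 1$) is enough; for $\frac{1}{\varepsilon}\hat{a}(M_\delta(u^\delta) F''(u^\delta), u^\delta, \phi)$, the hypothesis $MF_1'' \in C^0([-1,1])$ keeps the coefficient bounded and continuous, so Theorem \ref{generalised dct1} applies.

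\textbf{Main obstacle.} The most delicate step is the rigorous derivation of the entropy estimate. Since $\Psi_\delta'(u^\delta)$ is generally unbounded and does not lie in the Galerkin subspace, direct testing is illegal; I would instead test with a bounded truncation $\Psi_\delta^{\prime,\sigma}$ of $\Psi_\delta'$, exploit the extra regularity $u^\delta \in L^2_{H^2}$ to justify the integrations by parts, and send $\sigma \to 0$ before $\delta \to 0$. The evolving-surface transport theorem generates geometric contributions that are harmless only because of Assumption \ref{velocity assumption}; without $\gradg \cdot V \geq 0$ the term $\int (\Psi_\delta(u^\delta) - u^\delta \Psi_\delta'(u^\delta))\gradg \cdot V$ could grow uncontrollably, reducing the result to local-in-time existence. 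A secondary technical difficulty is coupling the regularisation of a singular $F_1$ with that of $M$ so that $M_\delta F_1^{\delta,\prime\prime}$ converges uniformly to $MF_1''$ on $[-1,1]$.
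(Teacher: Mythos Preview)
Your proposal follows the same Elliott--Garcke regularisation strategy as the paper (regularise $M$ and $F_1$, derive $\delta$-uniform energy and entropy estimates, then pass to the limit by compactness), and the overall structure is correct. One pleasant difference: you handle the geometric term in the entropy estimate by combining both surface-divergence contributions into $\int(\Psi_\delta(u^\delta)-u^\delta\Psi_\delta'(u^\delta))\,\gradg\cdot V$ and discarding it via convexity, whereas the paper treats them separately, dropping $g(u^\delta,(\Psi^\delta)'(u^\delta))\geq 0$ and absorbing $g(\Psi^\delta(u^\delta),1)\leq C\,m(\Psi^\delta(u^\delta),1)$ by Gr\"onwall. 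Your version is marginally cleaner.

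Two small gaps. First, in the passage to the limit for $\hat{c}(u^\delta,M_\delta(u^\delta),\phi)$ you only invoke strong $L^2_{L^2}$ convergence of $u^\delta$ and of $M_\delta(u^\delta)$, but $\gradg\cdot(M_\delta(u^\delta)\gradg\phi)$ contains the term $M_\delta'(u^\delta)\,\gradg u^\delta\cdot\gradg\phi$, so you need strong convergence of $\gradg u^\delta$ as well. The paper obtains this from Aubin--Lions with $H^2\overset{c}{\hookrightarrow}H^1\hookrightarrow H^{-1}$, giving $u^\delta\to u$ strongly in $L^2_{H^1}$; you should do the same. Second, your justification ``$\Psi_\delta(r)\to+\infty$ as $|r|\to 1$'' is mis-stated: for fixed $\delta$, $\Psi_\delta$ is finite everywhere. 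The actual mechanism (which the paper spells out) is that for $|r|>1$ one has $\Psi_\delta(r)\gtrsim (|r|-1)^2/M_\delta(\pm 1)$, so the uniform bound on $\int\Psi_\delta(u^\delta)$ forces $\int(|u^\delta|-1)_+^2\to 0$.

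Finally, your ``main obstacle'' is not one. Since $M_\delta$ is bounded below (by $\delta$ in your regularisation, by $M(\pm(1-\delta))$ in the paper's), $\Psi_\delta''=1/M_\delta$ is bounded and $\Psi_\delta'$ has linear growth, so $\Psi_\delta'(u^\delta)\in H^1(\Gamma(t))$ whenever $u^\delta\in H^1(\Gamma(t))$. One therefore tests the \emph{regularised} weak formulation \eqref{weakeqn1} with $\Psi_\delta'(u^\delta)$ directly at the level of $u^\delta$ (i.e.\ after the Galerkin limit), exactly as the paper does; no additional truncation $\Psi_\delta^{\prime,\sigma}$ is needed.
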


\subsection{Existence}
\subsubsection{Regularisation}
We consider a regularised problem with a positive mobility as in the previous section.
Throughout we set $\delta \in (0,1)$ as some regularisation parameter, where the limit $\delta \rightarrow 0$ should recover degenerate problem.
We consider the regularised potential given by $F^\delta(r) := F_1^\delta(r) + F_2(r)$, where
\begin{align}
	F_1^\delta(r) :=
	\begin{cases}
		\sum_{k=0}^2 \frac{F_1^{(k)}(-1+\delta)}{k!}(r+1-\delta)^k, & r \leq -1 + \delta \\
		F_1(r), & r \in (-1+\delta, 1- \delta),\\
		\sum_{k=0}^2 \frac{F_1^{(k)}(1-\delta)}{k!}(r+1-\delta)^k, & r \geq 1-\delta,
	\end{cases}
\end{align}
and the regularised mobility given by
\begin{align}
	M^\delta(r) =\begin{cases}
		M(-1+\delta), & r \leq -1 + \delta \\
		M(r), & r \in (-1+\delta, 1- \delta),\\
		M(1-\delta), & r \geq 1-\delta.
	\end{cases}
\end{align}
With this regularised mobility we define a function which will be used when we consider the limit $\delta \rightarrow 0$ to establish that $|u(t)| \leq 1$ almost everywhere.
For $\delta \in (0,1)$ we define $\Psi^\delta: \mbb{R} \rightarrow \mbb{R}^+$ to be the unique solution of the ODE,
\begin{align}
(\Psi^\delta)''(r) = \frac{1}{M^\delta(r)}, \qquad \Psi^\delta(0) = 0 = (\Psi^\delta)'(0). \label{defn: psi delta}
\end{align}
We similarly define $\Psi: (-1,1) \rightarrow \mbb{R}^+$ to be the unique solution of the ODE
\begin{align}
\Psi''(r) = \frac{1}{M(r)}, \qquad \Psi(0) = 0 = \Psi'(0). \label{defn: psi}
\end{align}
We observe that $\Psi^\delta(r) = \Psi(r)$ for $r \in (-1+ \delta, 1- \delta)$.\\

From the above constructions it is clear that $M^\delta \in C^0(\mbb{R})$, in fact it is Lipschitz continuous, and $F_1^\delta \in C^2(\mbb{R})$, and so the preceding theory applies for the existence of a regularised solution for some choice of initial data, $u_0 \in H^1(\Gamma_0)$, such that \eqref{degenerate initial data} holds.
We now show that the corresponding solution is appropriately bounded independent of $\delta$.
To do this, we repeat the energy estimates as in the proof of Lemma \ref{energyestimate}.

\begin{lemma}
	For sufficiently small $\delta$ the solution pair $u^\delta, w^\delta$ is such that
	\begin{align}
		\sup_{t \in [0,T]}\Echd[u^\delta;t] + \int_0^T \int_{\Gamma(t)} M^\delta(u^\delta) |\gradg w^\delta|^2\leq C, \label{reg energy estimate}
	\end{align}
	for a constant $C$ independent of $\delta$ but depending on $T$.
\end{lemma}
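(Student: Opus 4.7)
The plan is to mirror the energy estimate of Lemma \ref{energyestimate}, but replace the Galerkin-level argument for the problematic $g(u^\delta, w^\delta)$ term with one that exploits Assumption \ref{velocity assumption} ($\gradg \cdot V \geq 0$) together with the sign condition $r F_1'(r) \geq 0$, so that all constants come out independent of $\delta$. First, since $M^\delta$ is continuous and bounded below by $M(-1+\delta)>0$, and $F^\delta = F_1^\delta + F_2$ satisfies Assumption \ref{potential assumptions} (with $\delta$-dependent constants), Theorem \ref{regular existence} produces a weak solution pair $(u^\delta, w^\delta)$. Testing the regularised $u^\delta$ equation with $w^\delta$ and the $w^\delta$ equation with $\matdev u^\delta$ (justified by passing through the Galerkin scheme that produced $(u^\delta, w^\delta)$, exactly as in Lemma \ref{energyestimate}), and invoking Proposition \ref{transport2} yields the energy identity
\[
\frac{d}{dt}\Echd[u^\delta;t] + \int_{\Gamma(t)} M^\delta(u^\delta)|\gradg w^\delta|^2 + g(u^\delta,w^\delta) = \frac{\varepsilon}{2} b(u^\delta,u^\delta) + \frac{1}{\varepsilon} g(F^\delta(u^\delta),1).
\]

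The main work is controlling $g(u^\delta, w^\delta)$ without invoking the $\gamma$-trick used for Galerkin projection errors. Using $u^\delta \gradg \cdot V \in H^1(\Gamma(t))$ as a test function in \eqref{weakeqn2},
\[
g(u^\delta,w^\delta) = m(w^\delta, u^\delta \gradg\cdot V) = \varepsilon a(u^\delta, u^\delta \gradg\cdot V) + \frac{1}{\varepsilon} m((F_1^\delta)'(u^\delta)u^\delta, \gradg\cdot V) + \frac{1}{\varepsilon} m(F_2'(u^\delta) u^\delta, \gradg\cdot V).
\]
The product-rule expansion $\gradg(u^\delta \gradg\cdot V) = (\gradg\cdot V)\gradg u^\delta + u^\delta \gradg(\gradg\cdot V)$ gives a non-negative ``good'' piece $\varepsilon \int (\gradg\cdot V)|\gradg u^\delta|^2 \geq 0$ (thanks to Assumption \ref{velocity assumption}), plus a remainder bounded by $C\|u^\delta\|_{L^2}\|\gradg u^\delta\|_{L^2}\leq C(1 + \Echd[u^\delta;t])$ using the smoothness of $V$. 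A direct check of the definition of $F_1^\delta$ shows $r(F_1^\delta)'(r)\geq 0$ is inherited from $rF_1'(r)\geq 0$ (the quadratic tails preserve the sign), so together with $\gradg\cdot V \geq 0$ the $F_1^\delta$ contribution is non-negative and can simply be discarded from the right-hand side. The $F_2$ contribution is controlled by the linear growth of $F_2'$, giving $|m(F_2'(u^\delta)u^\delta,\gradg\cdot V)|\leq C(1 + \|u^\delta\|_{L^2}^2)\leq C(1+\Echd[u^\delta;t])$.

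The remaining right-hand side terms are standard: $|b(u^\delta,u^\delta)|\leq C\|\gradg u^\delta\|_{L^2(\Gamma(t))}^2 \leq C\Echd[u^\delta;t]$, and, after shifting by $\beta$ (whose lower bound carries over to $F^\delta$ for $\delta$ sufficiently small because $F_1^\delta \to F_1$ locally uniformly on $[-1,1]$), $\frac{1}{\varepsilon}g(F^\delta(u^\delta),1)\leq C(1+\Echd[u^\delta;t])$. Combining everything produces the differential inequality
\[
\frac{d}{dt}\Echd[u^\delta;t] + \int_{\Gamma(t)} M^\delta(u^\delta)|\gradg w^\delta|^2 \leq C\bigl(1 + \Echd[u^\delta;t]\bigr),
\]
and a standard Grönwall argument, followed by time-integration, yields the claimed bound provided $\Echd[u_0;0]$ is bounded uniformly in $\delta$. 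For this last point, the initial energy bound follows from the hypothesis $\int_{\Gamma_0} F(u_0) + \Psi(u_0) \leq C$: the finiteness of $\int \Psi(u_0)$ forces $|u_0|<1$ a.e., and since $F^\delta(u_0)\to F(u_0)$ pointwise a.e.\ with $F^\delta(u_0)$ dominated by an integrable function (the quadratic tails of $F_1^\delta$ grow slower than $F_1$ near $\pm 1$ on the set where $|u_0|\in(1-\delta,1)$), dominated convergence gives $\int F^\delta(u_0)\to \int F(u_0)$ so the initial energy is uniformly bounded. The key obstacle is the control of $g(u^\delta,w^\delta)$ in Step 3 — without Assumption \ref{velocity assumption} one cannot kill the offending $(F_1^\delta)'(u^\delta)u^\delta \gradg\cdot V$ term by a sign argument, and the polynomial-growth route used in Lemma \ref{energyestimate} would produce constants depending on $\delta$ through the quadratic tails of $F_1^\delta$.
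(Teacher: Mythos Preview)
Your proposal is correct and follows essentially the same approach as the paper: both derive the energy identity (matching the paper's \eqref{energypf5}), handle the problematic $g(u^\delta,w^\delta)$ term by the sign conditions $\gradg\cdot V \geq 0$ and $r(F_1^\delta)'(r)\geq 0$ so that the dangerous pieces can be discarded, and then close with Gr\"onwall plus a $\delta$-uniform bound on $\Echd[u_0;0]$. The only cosmetic difference is that the paper inserts the strong form $w^\delta = -\varepsilon\lapg u^\delta + \frac{1}{\varepsilon}(F^\delta)'(u^\delta)$ and integrates by parts, whereas you test \eqref{weakeqn2} with $u^\delta\gradg\cdot V$; these are the same computation, and your direct-inequality/dominated-convergence argument for the initial energy is equivalent to the paper's monotonicity observation that $F_1^\delta(u_0)\leq F_1(u_0)$.
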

\begin{proof}
	Strictly speaking this proof is done at the level of Galerkin approximation, and then passing to the limit in the Galerkin parameter, but we avoid this for notational simplicity.
	By arguing as in Lemma \ref{energyestimate} it is straightforward to show that
	\begin{align}
		\frac{d}{dt} \Echd[u^\delta;t] + \hat{a}(M^\delta(u^\delta),w^\delta,w^\delta) + g(u^\delta,w^\delta) = \frac{\varepsilon}{2} b(u^\delta,u^\delta) + \frac{1}{\varepsilon} g(F^\delta(u),1). \label{energypf5}
	\end{align}
	The key term which is problematic is $g(u^\delta, w^\delta)$, but from our assumption that $\divg V \geq 0$ we will find that this term is dealt with in a straightforward way.
	As $u^\delta \in L^2_{H^2}$ we know that
	\[ w^\delta = -\varepsilon \lapg u^\delta + \frac{1}{\varepsilon} (F^\delta)'(u^\delta), \]
	almost everywhere on $\Gamma(t)$ for almost all $t \in [0,T]$.
	Hence we use this in $g(u^\delta, w^\delta)$ for
	\[ g(u^\delta, w^\delta) = - \varepsilon \int_{\Gamma(t)} u^\delta \lapg u^\delta \divg V + \frac{1}{\varepsilon}\int_{\Gamma(t)} u^\delta (F^\delta)'(u^\delta) \divg V, \]
	and now by integration by parts and noting that $r (F_1^\delta)'(r) \geq 0$ we see
	\begin{multline*}
		g(u^\delta, w^\delta) \geq  \underbrace{\varepsilon \int_{\Gamma(t)} |\gradg u^\delta|^2  \divg V}_{\geq 0} + \varepsilon \int_{\Gamma(t)} u^\delta \gradg u^\delta \cdot \gradg (\divg V)\\
		+ \underbrace{\frac{1}{\varepsilon}\int_{\Gamma(t)} u^\delta (F_1^\delta)'(u^\delta) \divg V}_{\geq 0} + \frac{1}{\varepsilon}\int_{\Gamma(t)} u^\delta F_2'(u^\delta) \divg V.
	\end{multline*}
	Using this in \eqref{energypf5} yields
	\begin{multline}
		\frac{d}{dt} \Echd[u^\delta;t] + \hat{a}(M^\delta(u^\delta),w^\delta,w^\delta)  \leq \frac{\varepsilon}{2} b(u^\delta,u^\delta) + \frac{1}{\varepsilon} g(F^\delta(u),1)\\
		- \varepsilon \int_{\Gamma(t)} u^\delta \gradg u^\delta \cdot \gradg (\divg V) - \frac{1}{\varepsilon}\int_{\Gamma(t)} u^\delta F_2'(u^\delta) \divg V \label{energypf6}.
	\end{multline}
	It is then routine to show that one can appropriately bound this as
	\begin{multline*}
		\frac{\varepsilon}{2} b(u^\delta,u^\delta) + \frac{1}{\varepsilon} g(F^\delta(u),1) + \varepsilon \left|\int_{\Gamma(t)} u^\delta \gradg u^\delta \cdot \gradg (\divg V)\right|+ \frac{1}{\varepsilon}\left|\int_{\Gamma(t)} u^\delta F_2'(u^\delta) \divg V \right|\\
		\leq C + C \Echd[u^\delta],
	\end{multline*}
	after using the Poincaré inequality, bounds on $F_2'$, and the smoothness of $V$.\\
	
	Using bounds of this form in \eqref{energypf6} and integrating in time we see
	\[ \Echd[u^\delta;t] + \int_0^t M^\delta(u^\delta) \gradg w^\delta \cdot \gradg w^\delta \leq \Echd[u_0;0] + C + C \int_0^t \Echd[u^\delta],\]
	and so using the Gr\"onwall inequality yields the desired result.\\
	
	It remains to see that the bound is independent of $\delta$.
	For this we note that
	\begin{multline*}
		\Echd[u_0;0] = \int_{\Gamma(t)} \frac{\varepsilon|\gradg u_0|^2}{2} + \frac{1}{\varepsilon} F^\delta(u_0) = \frac{\varepsilon}{2}\|u_0\|_{H^1(\Gamma_0)}^2 + \frac{1}{\varepsilon} \int_{\Gamma_0}F_2(u_0)\\
		+ \frac{1}{\varepsilon} \int_{\{|u_0|< 1 - \delta\}} F_{1}(u_0) + \frac{1}{\varepsilon} \int_{\{u_0 \geq 1- \delta\}} \sum_{k=0}^2 \frac{F_1^{(k)}(1-\delta)}{k!}(u_0-1+\delta)^k\\
		+ \frac{1}{\varepsilon} \int_{\{u_0 \leq -1 + \delta\}} \sum_{k=0}^2 \frac{F_1^{(k)}(-1+\delta)}{k!}(u_0+1-\delta)^k, 
	\end{multline*}
	and so it is only necessary to bound the last three terms.
	This follows from the monotonicity of $F_1$ and its derivatives near $\pm 1$, for which one can show that for sufficiently small $\delta$
	\begin{multline*}
		\frac{1}{\varepsilon} \int_{\{|u_0|< 1 - \delta\}} F_{1}(u_0)
		+ \frac{1}{\varepsilon} \int_{\{u_0 \geq 1- \delta\}} \sum_{k=0}^2 \frac{F_1^{(k)}(1-\delta)}{k!}(u_0-1+\delta)^k\\
		+ \frac{1}{\varepsilon} \int_{\{u_0 \leq -1 + \delta\}} \sum_{k=0}^2 \frac{F_1^{(k)}(-1+\delta)}{k!}(u_0+1-\delta)^k \leq \frac{1}{\varepsilon} \int_{\Gamma_0} F_1(u_0),
	\end{multline*}
	which is finite by assumption.
	From this we conclude that the desired bound is in fact independent of $\delta$.
\end{proof}
\begin{remark}
    \label{surface divergence remark1}
	This result can be shown to hold without the assumption that $\gradg \cdot V \geq 0$ by using similar arguments to the proof of \cite[Proposition 5.7]{caetano2021cahn}, provided that we have bounds of the form
	\[ |rF_1'(r)| \leq \alpha F_1(r) + \beta, \qquad |F_1'(r)| \leq r F_1'(r) + \alpha, \]
	for some $\alpha > 0$.
    We do not perform this calculation here, but refer the reader to the arguments used in \cite[Proposition 5.7]{caetano2021cahn}.
	It is worth noting also that without the assumption $\gradg \cdot V \geq 0$, one will likely require some further constraints on the initial data.
    For example in \cite{caetano2021cahn, caetano2023regularization} the authors require
	\[ \frac{1}{|\Gamma(t)|} \left|\int_{\Gamma_0} u_0 \right| < 1,\]
	for all $t \in [0,T]$, so that the solution $u$ is valued in $(-1,1)$ almost everywhere (we refer the reader to \cite[Proposition 5.1]{caetano2021cahn}).
	We have used the above argument as it is more succinct, and we will require the assumption $\gradg \cdot V \geq 0$ for the most useful\footnote{See Remark \ref{surface divergence remark2} for a further discussion on removing the assumption $\gradg \cdot V \geq 0$.} form of the next lemma.
\end{remark}
We now show a similar result, which will be used in showing that $|u| \leq 1$ when we pass to the limit $\delta \rightarrow 0$.
\begin{lemma}
	For sufficiently small $\delta$ one has
	\begin{align}
		\sup_{t \in [0,T]} \int_{\Gamma(t)} \Psi^\delta(u^\delta) + \int_0^T \int_{\Gamma(t)} \varepsilon |\lapg u^\delta|^2 + \frac{1}{\varepsilon}  \int_0^T \int_{\Gamma(t)}(F_1^\delta)''(u^\delta)|\gradg u|^2  \leq C, \label{reg energy estimate2}
	\end{align}
	for a constant $C$ independent of $\delta$ but depending on $T$.
\end{lemma}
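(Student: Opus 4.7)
The plan is to test the regularised strong-form equation with $(\Psi^\delta)'(u^\delta)$ and exploit the defining cancellation $M^\delta(r)(\Psi^\delta)''(r)=1$. For fixed $\delta$, $(\Psi^\delta)''=1/M^\delta$ is bounded because $M^\delta\geq\min\{M(-1+\delta),M(1-\delta)\}>0$, so $(\Psi^\delta)'$ is Lipschitz and $(\Psi^\delta)'(u^\delta)\in L^2_{H^1}$ is an admissible test function. As with Lemma~\ref{energyestimate}, the computation should first be performed on Galerkin approximations and then passed to the limit; I will present it formally.

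Using the chain rule $(\Psi^\delta)'(u^\delta)\matdev u^\delta=\matdev\Psi^\delta(u^\delta)$ together with Proposition~\ref{transport2}, the time-derivative contribution becomes $\frac{d}{dt}\int\Psi^\delta(u^\delta) - \int\Psi^\delta(u^\delta)\gradg\cdot V$. The right-hand side, after one integration by parts and the cancellation $M^\delta(\Psi^\delta)''=1$, collapses to $-\int\gradg u^\delta\cdot\gradg w^\delta$. Substituting the pointwise identity $w^\delta=-\varepsilon\lapg u^\delta+\varepsilon^{-1}(F^\delta)'(u^\delta)$ (which holds in $L^2$ since standard elliptic regularity applied to \eqref{weakeqn2} gives $u^\delta\in L^2_{H^2}$) and integrating by parts once more on the closed surface produces $-\varepsilon\int|\lapg u^\delta|^2 - \varepsilon^{-1}\int(F^\delta)''(u^\delta)|\gradg u^\delta|^2$. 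Splitting $(F^\delta)''=(F_1^\delta)''+F_2''$ and keeping the non-negative $(F_1^\delta)''$ piece on the left yields
\begin{align*}
\frac{d}{dt}\int_{\Gamma(t)}\Psi^\delta(u^\delta) &+ \varepsilon\int_{\Gamma(t)}|\lapg u^\delta|^2 + \frac{1}{\varepsilon}\int_{\Gamma(t)}(F_1^\delta)''(u^\delta)|\gradg u^\delta|^2 \\
&= \int_{\Gamma(t)}(\gradg\cdot V)\bigl(\Psi^\delta(u^\delta) - u^\delta(\Psi^\delta)'(u^\delta)\bigr) - \frac{1}{\varepsilon}\int_{\Gamma(t)} F_2''(u^\delta)|\gradg u^\delta|^2.
\end{align*}

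The first right-hand side term is non-positive: by convexity of $\Psi^\delta$ with $\Psi^\delta(0)=(\Psi^\delta)'(0)=0$, the tangent-line inequality at $u^\delta$ evaluated at $0$ yields $u^\delta(\Psi^\delta)'(u^\delta)\geq\Psi^\delta(u^\delta)$ pointwise, while Assumption~\ref{velocity assumption} provides $\gradg\cdot V\geq 0$. This is the single place the sign condition on the surface velocity is used, and it is exactly what lets us discard the term. The $F_2''$ contribution is bounded by $C\int|\gradg u^\delta|^2$ (using boundedness of $F_2''$, consistent with the standing growth hypothesis on $F_2'$), and this integral is uniform in $t$ by the prior energy estimate \eqref{reg energy estimate}. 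Direct time-integration of the resulting differential inequality then yields the claimed bound up to the initial-data quantity $\int_{\Gamma_0}\Psi^\delta(u_0)$.

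The main remaining obstacle is a uniform-in-$\delta$ bound on $\int_{\Gamma_0}\Psi^\delta(u_0)$. Hypothesis \eqref{degenerate initial data} forces $|u_0|\leq 1$ a.e., and $\Psi^\delta(u_0)=\Psi(u_0)$ on $\{|u_0|<1-\delta\}$; on the shrinking boundary layer $\{|u_0|\geq 1-\delta\}$ the quadratic Taylor extension gives $\Psi^\delta(u_0)\leq\Psi(1-\delta)+\Psi'(1-\delta)\delta+\delta^2/(2M(1-\delta))$ (and symmetrically near $-1$). Under the standing structural hypothesis that $M F_1''\in C^0([-1,1])$, both $\Psi'(1-\delta)\delta$ and $\delta^2/M(1-\delta)$ are $o(1)$ as $\delta\to 0$, which one verifies directly in the model cases $M(r)=(1-r^2)^k$, and a standard dominated convergence argument then bounds $\int_{\Gamma_0}\Psi^\delta(u_0)$ by $\int_{\Gamma_0}\Psi(u_0)+o(1)$, closing the proof.
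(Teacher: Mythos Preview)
Your argument is correct and follows the same overall strategy as the paper: test with $(\Psi^\delta)'(u^\delta)$, use the defining cancellation $M^\delta(\Psi^\delta)''=1$, substitute the strong form of $w^\delta$, and split off $(F_1^\delta)''$. Two points of comparison are worth recording.

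First, your treatment of the velocity term is slightly sharper than the paper's. You combine $g(\Psi^\delta(u^\delta),1)$ and $g(u^\delta,(\Psi^\delta)'(u^\delta))$ into $\int(\gradg\cdot V)\bigl(\Psi^\delta(u^\delta)-u^\delta(\Psi^\delta)'(u^\delta)\bigr)$ and discard the whole thing via the tangent-line inequality, so no Gr\"onwall is needed. The paper instead drops only $g(u^\delta,(\Psi^\delta)'(u^\delta))\geq 0$, then bounds $g(\Psi^\delta(u^\delta),1)\leq C\,m(\Psi^\delta(u^\delta),1)$ and closes with Gr\"onwall. Your route is cleaner here.

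Second, your initial-data argument is unnecessarily elaborate. The paper simply observes that $M(r)\leq M^\delta(r)$ on $(-1,1)$ for sufficiently small $\delta$, whence $(\Psi^\delta)''\leq\Psi''$ and therefore $\Psi^\delta(r)\leq\Psi(r)$ pointwise; the hypothesis $\int_{\Gamma_0}\Psi(u_0)<\infty$ then gives the uniform bound in one line. Your boundary-layer analysis with Taylor remainders and dominated convergence works, but you should replace it with this monotonicity observation.
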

\begin{proof}
	Firstly, we note that $(\Psi^\delta)'(u^\delta) \in H^1(\Gamma(t))$ for almost all $t \in [0,T]$, and so we may use $(\Psi^\delta)'(u^\delta)$ as an admissible test function in \eqref{weakeqn1}.
	This yields
	\[ m_*(\matdev u^\delta, (\Psi^\delta)'(u^\delta)) + g(u^\delta, (\Psi^\delta)'(u^\delta)) + \hat{a}(M^\delta(u^\delta),w^\delta,(\Psi^\delta)'(u^\delta)) = 0. \]
	We now make some observations about $\Psi^\delta$.
	Firstly, it is clear from the construction that $\Psi^\delta(\cdot)$ is a non-negative function, and it attains its minimum at $0$, with $\Psi^\delta(0) = 0$.
	Next we see that $(\Psi^\delta)'(\cdot)$ is increasing and such that $(\Psi^\delta)'(0) = 0$.
	Hence we have that $r (\Psi^\delta)'(r) \geq 0$ for all $r \in \mbb{R}$.
	In particular, combining this with the assumption that $\divg V \geq 0$ we find that $g(u^\delta, (\Psi^\delta)'(u^\delta)) \geq 0$.
	Now from the transport theorem we know that
	\[ m_*(\matdev u^\delta, (\Psi^\delta)'(u^\delta)) = \frac{d}{dt} m(\Psi^\delta(u^\delta), 1) - g(\Psi^\delta(u^\delta), 1). \]
	Combining these facts we obtain
	\begin{align}
		\frac{d}{dt} m(\Psi^\delta(u^\delta), 1) + \hat{a}(M^\delta(u^\delta), w^\delta, (\Psi^\delta)'(u^\delta)) \leq g(\Psi^\delta(u^\delta),1). \label{energypf7}
	\end{align}
	We now notice that the term involving $\hat{a}$ can be written as
	\[ \hat{a}(M^\delta(u^\delta),w^\delta,(\Psi^\delta)'(u^\delta)) = \int_{\Gamma(t)} M^\delta(u^\delta) \gradg w^\delta \cdot \gradg (\Psi^\delta)'(u^\delta) = \int_{\Gamma(t)} \gradg w^\delta \cdot \gradg u^\delta, \]
	where we have used the fact that $(\Psi^\delta)''(r) = \frac{1}{M^\delta(r)}$.
	Now by integration by parts and using the fact that $w = - \varepsilon \lapg u^\delta + \frac{1}{\varepsilon} (F^\delta)'(u^\delta)$ almost everywhere we see that
	\[ \int_{\Gamma(t)} \gradg w^\delta \cdot \gradg u^\delta = -\int_{\Gamma(t)} w^\delta  \lapg u^\delta = \int_{\Gamma(t)} \left( \varepsilon \lapg u^\delta - \frac{1}{\varepsilon} (F^\delta)'(u^\delta) \right) \lapg u^\delta.\]
	We then use integration by parts on the potential term so that
	\begin{multline*}
		\int_{\Gamma(t)} \left( \varepsilon \lapg u^\delta - \frac{1}{\varepsilon} (F^\delta)'(u^\delta) \right) \lapg u^\delta = \int_{\Gamma(t)} \varepsilon |\lapg u^\delta|^2 + \frac{1}{\varepsilon}\int_{\Gamma(t)} (F_1^\delta)''(u^\delta)|\gradg u|^2\\
		+ \frac{1}{\varepsilon}\int_{\Gamma(t)} F_2''(u^\delta)|\gradg u|^2.
	\end{multline*}
	We combine this with \eqref{energypf7} for
	\begin{multline}
		\frac{d}{dt} m(\Psi^\delta(u^\delta), 1) + \int_{\Gamma(t)} \varepsilon |\lapg u^\delta|^2 + \frac{1}{\varepsilon} (F_1^\delta)''(u^\delta)|\gradg u|^2 \leq g(\Psi^\delta(u^\delta), 1)\\
		- \frac{1}{\varepsilon} \int_{\Gamma(t)} F_2''(u^\delta)|\gradg u|^2. \label{energypf8}
	\end{multline}
	From our assumptions on $V$ we see that
	\[ g(\Psi^\delta(u^\delta),1) \leq C m(\Psi^\delta(u^\delta),1),\]
	for some constant independent of $\delta$, and likewise using the fact that $F_2''$ is uniformly bounded and the estimate \eqref{reg energy estimate} we see
	\[ \left| \frac{1}{\varepsilon} \int_{\Gamma(t)} F_2''(u^\delta)|\gradg u|^2 \right| \leq C, \]
	for a constant independent of $\delta$.
	Hence we find that \eqref{energypf8} becomes
	\[ \frac{d}{dt} m(\Psi^\delta(u^\delta), 1) + \int_{\Gamma(t)} \varepsilon |\lapg u^\delta|^2 + \frac{1}{\varepsilon} (F_1^\delta)''(u^\delta)|\gradg u|^2 \leq C + Cm(\Psi^\delta(u^\delta), 1),\]
	and hence a Gr\"onwall inequality yields \eqref{reg energy estimate2}.
	The independence of $\delta$ follows from the fact that $M(r) \leq M^\delta(r)$, and hence $\Psi^\delta(r) \leq \Psi(r)$ for sufficiently small  $\delta$.
\end{proof}

\begin{remark}
\label{surface divergence remark2}
	We can avoid the assumption that $\gradg \cdot V \geq 0$, provided we make the same assumptions on $F_1$ as we did in Remark \ref{surface divergence remark1}, and we also have a bound of the form
	\[ r (\Psi^\delta)'(r) \leq \alpha \Psi^\delta(r) + \beta, \]
	for some $\alpha > 0, \beta \in \mbb{R}$.
	In this case one finds that
	\[|g(u^\delta, (\Psi^\delta)'(u^\delta))| \leq Cm(\Psi^\delta(u^\delta),1) + C,\]
	and we can proceed as above.
	This is the case for $M(r) = 1 -r^2$, where one finds
	\[ r (\Psi^\delta)'(r) \leq \Psi^\delta(r) + 1. \]
	
	In general this is an unrealistic assumption on a degenerate mobility, as we now illustrate.
	Consider the function $M(r) = (1-r^2)^2$, so that
	\begin{align*}
		r \Psi'(r) &= \int_0^r \frac{d}{ds} s\Psi'(s) \, ds = \int_0^r \Psi'(s) + \frac{s}{(1-s^2)^2} \, ds\\
		&= \Psi(r) + \frac{1}{2(1-r^2)} - \frac{1}{2},
	\end{align*}
	where clearly $\frac{1}{2(1-r^2)}$ is unbounded for $r \in [-1,1]$.
	The calculation for $(\Psi^\delta)'(r)$ follows similarly.
\end{remark}

With this estimate at hand we can now discuss the measure of the set where $|u^\delta| > 1$.
The following lemma is proven similarly to \cite{caetano2021cahn} Lemma 5.8 and \cite{elliott1996cahn} Lemma 2(c).

\begin{lemma}
	For sufficiently small $\delta$ one has that
	\begin{align}
		\int_{\Gamma(t)} [|u^\delta| - 1]_+^2 \leq C \left(\delta^2 + M^\delta(1-\delta) + M^\delta(-1+\delta) \right), \label{delta control}
	\end{align}
	for almost all $t$, where $C$ is independent of $\delta, t$.
	Here we are using the notation $[z]_+ = \max(z,0)$.
\end{lemma}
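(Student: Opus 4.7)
The plan is to leverage the uniform bound $\sup_t \int_{\Gamma(t)} \Psi^\delta(u^\delta) \le C$ obtained in \eqref{reg energy estimate2}. The function $\Psi^\delta$ was built precisely to control how far $u^\delta$ can stray past $\pm 1$: outside the interval $(-1+\delta,1-\delta)$ the regularised mobility $M^\delta$ is a frozen constant, so $(\Psi^\delta)'' = 1/M^\delta$ is constant there and $\Psi^\delta$ is genuinely quadratic. The task is just to convert the $L^1$ bound on $\Psi^\delta(u^\delta)$ into the desired pointwise-squared bound on $[|u^\delta|-1]_+$.

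Concretely, for $r \geq 1-\delta$ the Taylor expansion gives
\begin{align*}
\Psi^\delta(r) = \Psi^\delta(1-\delta) + (\Psi^\delta)'(1-\delta)(r-(1-\delta)) + \frac{(r-(1-\delta))^2}{2 M^\delta(1-\delta)}.
\end{align*}
Since $\Psi^\delta \geq 0$ attains its minimum at $0$ with $\Psi^\delta(0)=0$, and since $(\Psi^\delta)'$ is increasing with $(\Psi^\delta)'(0)=0$, both the zeroth- and first-order terms on the right are non-negative. Dropping them yields
\begin{align*}
(r-(1-\delta))^2 \leq 2 M^\delta(1-\delta)\,\Psi^\delta(r), \qquad r \geq 1-\delta,
\end{align*}
and an entirely symmetric argument at $r = -1+\delta$ gives $(r-(-1+\delta))^2 \leq 2 M^\delta(-1+\delta)\,\Psi^\delta(r)$ for $r \leq -1+\delta$.

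Finally, on the set $\{u^\delta \geq 1\}$ we have $u^\delta -(1-\delta) = (u^\delta-1)+\delta \geq u^\delta -1 \geq 0$, so $(u^\delta-1)^2 \leq (u^\delta-(1-\delta))^2$; the analogous inequality holds on $\{u^\delta \leq -1\}$. Splitting
\begin{align*}
\int_{\Gamma(t)} [|u^\delta|-1]_+^2 = \int_{\{u^\delta \geq 1\}}(u^\delta-1)^2 + \int_{\{u^\delta \leq -1\}}(u^\delta+1)^2,
\end{align*}
applying the two pointwise bounds, and invoking $\int_{\Gamma(t)} \Psi^\delta(u^\delta) \leq C$ yields the estimate with constant $C(M^\delta(1-\delta) + M^\delta(-1+\delta))$; the extra $\delta^2$ in the stated bound is harmless slack. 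I do not expect any substantive obstacle here, since the real work was already done in deriving the uniform bound on $\int \Psi^\delta(u^\delta)$; this lemma is just the translation step that makes $\Psi^\delta$ useful for passing to the degenerate limit $\delta \to 0$, where the right-hand side vanishes (for instance $M^\delta(\pm(1-\delta)) = M(\pm(1-\delta)) \to 0$) and so $|u| \leq 1$ almost everywhere.
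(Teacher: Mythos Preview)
Your argument is correct and follows essentially the same route as the paper: both exploit that $\Psi^\delta$ is exactly a quadratic with second derivative $1/M^\delta(\pm(1-\delta))$ outside $(-1+\delta,1-\delta)$, drop the non-negative lower-order Taylor terms, and combine with the uniform bound \eqref{reg energy estimate2}. The only cosmetic difference is that the paper integrates over $\{u^\delta>1-\delta\}$ (respectively $\{u^\delta<-1+\delta\}$) and uses the elementary inequality $(r-1)^2\le \delta^2+(r-1+\delta)^2$ for $r\ge 1-\delta$, which is where the $\delta^2$ term comes from; by restricting to $\{|u^\delta|\ge 1\}$ you avoid this step and obtain the slightly sharper bound without the $\delta^2$, as you correctly note.
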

\begin{proof}
	To begin we write
	\[ \int_{\Gamma(t)} [|u^\delta| - 1]_+^2 = \int_{\{ u^\delta > 1 -\delta \}} (u^\delta -1)^2+ \int_{\{ u^\delta < -1+ \delta \}}  (1-u^\delta)^2. \]
	We focus on the case $u^\delta > 1 - \delta$, and note that the case $u^\delta < -1 + \delta$ follows analogously.
	By construction, we see that for $r > 1 - \delta$ that
	\[ \Psi^\delta(r) = \Psi(1-\delta) + \Psi'(1-\delta)(r - 1 +\delta) + \frac{\Psi''(1-\delta)}{2}(r - 1 +\delta)^2, \]
	and as $\Psi(1-\delta) + \Psi'(1-\delta)(r - 1 +\delta) \geq 0$ for $r \geq 1 - \delta$ we see
	\[ (r - 1 +\delta)^2 \leq 2 M(1- \delta) \Psi^\delta(r),\]
	where we have used the fact that $\Psi''(r) = \frac{1}{M(r)}$.
	Then for $r > 1- \delta$ we see that
	\[ (r-1)^2 \leq \delta^2 + (r-1+\delta)^2. \]
	From this we find that
	\[ \int_{\{ u^\delta > 1 -\delta \}} (u^\delta -1)^2 \leq \int_{\Gamma(t)} \left( \delta^2 + 2 M(1- \delta) \Psi^\delta(u^\delta) \right), \]
	and one can similarly show that
	\[ \int_{\{ u^\delta < -1 +\delta \}} (u^\delta -1)^2 \leq \int_{\Gamma(t)} \left( \delta^2 + 2 M(-1+ \delta) \Psi^\delta(u^\delta) \right). \]
	Combining these with \eqref{reg energy estimate2} one finds that
	\[ \int_{\{ u^\delta > 1 -\delta \}} (u^\delta -1)^2+ \int_{\{ u^\delta < -1+ \delta \}}  (1-u^\delta)^2 \leq C \left(\delta^2 + M^\delta(1-\delta) + M^\delta(-1+\delta) \right), \]
	as desired, where we have used a uniform bound on $|\Gamma(t)|$.
\end{proof}
This will be used, in the limit $\delta \rightarrow 0$, to show that $|u|\leq 1$, where $u$ is the limit of $u^\delta$ in some weak sense (see \S \ref{subsubsection: degmob limit}).\\

Lastly we show a uniform bound on the time derivative.
\begin{lemma}
	For sufficiently small $\delta$ we have that
	\begin{align}
		\int_0^T \int_{\Gamma(t)}M^\delta(u^\delta)^2 |\gradg w^\delta|^2 \leq C, \label{reg energy estimate3}\\
		\int_0^T \|\matdev u^\delta\|_{H^{-1}(\Gamma(t))}^2 \leq C, \label{reg energy estimate4}
	\end{align}
	for constants, $C$, independent of $\delta$.
\end{lemma}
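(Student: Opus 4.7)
The plan is to deduce both bounds directly from the energy estimate \eqref{reg energy estimate} and the weak formulation satisfied by $(u^\delta, w^\delta)$, without needing any new energy identity.

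For \eqref{reg energy estimate3}, the key observation is that $M^\delta$ is uniformly bounded independent of $\delta$. Indeed, $M \in C^0([-1,1])$ with $M(\pm 1) = 0$, so $\sup_{r\in\mbb{R}} M^\delta(r) \leq \sup_{r \in [-1,1]} M(r) =: M_\infty < \infty$. Therefore
\[ M^\delta(u^\delta)^2 |\gradg w^\delta|^2 \leq M_\infty \, M^\delta(u^\delta) |\gradg w^\delta|^2, \]
pointwise almost everywhere, and integrating over $\mathcal{G}_T$ and appealing to \eqref{reg energy estimate} yields \eqref{reg energy estimate3} with a constant independent of $\delta$.

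For \eqref{reg energy estimate4}, I would test the regularised version of \eqref{weakeqn1} against an arbitrary $\phi \in H^1(\Gamma(t))$ and rearrange to
\[ m_*(\matdev u^\delta, \phi) = -g(u^\delta, \phi) - \hat{a}(M^\delta(u^\delta), w^\delta, \phi). \]
The first term on the right is bounded by $\|\gradg \cdot V\|_{L^\infty} \|u^\delta\|_{L^2(\Gamma(t))} \|\phi\|_{L^2(\Gamma(t))}$, which is controlled by $C \|\phi\|_{H^1(\Gamma(t))}$ since \eqref{reg energy estimate} gives a uniform $L^\infty_{H^1}$-bound on $u^\delta$ (the lower bound on $F$ together with conservation of the mean value of $u^\delta$ control $\|u^\delta\|_{L^2(\Gamma(t))}$). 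The trilinear term is estimated by Cauchy-Schwarz as
\[ |\hat{a}(M^\delta(u^\delta), w^\delta, \phi)| \leq \left( \int_{\Gamma(t)} M^\delta(u^\delta)^2 |\gradg w^\delta|^2 \right)^{\frac{1}{2}} \|\gradg \phi\|_{L^2(\Gamma(t))}. \]
Taking the supremum over $\phi \in H^1(\Gamma(t))$ with $\|\phi\|_{H^1(\Gamma(t))} \leq 1$, squaring, and integrating in time gives
\[ \int_0^T \|\matdev u^\delta\|_{H^{-1}(\Gamma(t))}^2 \leq C + C \int_0^T \int_{\Gamma(t)} M^\delta(u^\delta)^2 |\gradg w^\delta|^2, \]
and applying \eqref{reg energy estimate3} closes the argument.

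Neither step presents a genuine obstacle: the only subtlety is ensuring $M^\delta$ is uniformly bounded in $\delta$, which is immediate from the construction of the regularisation and the continuity of $M$ on $[-1,1]$. This is precisely why the first bound is phrased with $M^\delta(u^\delta)^2$ rather than $M^\delta(u^\delta)$ --- it is the quantity that naturally appears as the $L^2_{L^2}$-norm of the flux $M^\delta(u^\delta) \gradg w^\delta$ and feeds cleanly into the dual estimate for $\matdev u^\delta$.
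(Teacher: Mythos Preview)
Your proposal is correct and follows essentially the same approach as the paper. For \eqref{reg energy estimate3} the paper splits the integral over the three regions $\{|u^\delta|\leq 1-\delta\}$, $\{u^\delta>1-\delta\}$, $\{u^\delta<-1+\delta\}$ before bounding $M^\delta(u^\delta)$ by $\max_{[-1,1]}M$, but this amounts to exactly your observation that $\sup_{\mbb{R}}M^\delta\leq\sup_{[-1,1]}M$; for \eqref{reg energy estimate4} the paper's argument is identical to yours.
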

\begin{proof}
	We firstly establish \eqref{reg energy estimate3}, which is currently the missing ingredient in showing \eqref{reg energy estimate4}.
	To begin one writes
	\begin{multline*}
		\int_{\Gamma(t)}M^\delta(u^\delta)^2 |\gradg w^\delta|^2 = \int_{\{ |u^\delta| \leq 1 - \delta \}}M(u^\delta) M^\delta(u^\delta) |\gradg w^\delta|^2\\
		+ \int_{\{ u^\delta > 1 - \delta \}}M(1-\delta) M^\delta(u^\delta) |\gradg w^\delta|^2
		+ \int_{\{ u^\delta < -1 + \delta \}}M(-1 +\delta) M^\delta(u^\delta) |\gradg w^\delta|^2, 
	\end{multline*}
	from which we immediately obtain
	\[ \int_{\Gamma(t)}M^\delta(u^\delta)^2 |\gradg w^\delta|^2  \leq 3 \max_{r \in [-1,1]} M(r) \int_{\Gamma(t)}M^\delta(u^\delta) |\gradg w^\delta|^2, \]
	and \eqref{reg energy estimate3} then follows after using \eqref{reg energy estimate}.\\
	
	Establishing \eqref{reg energy estimate4} is now straightforward, as by using \eqref{weakeqn1} one finds that
	\[ m_*(\matdev u^\delta, \phi) = -g(u^\delta, \phi) - \hat{a}(M^\delta(u^\delta), w^\delta, \phi),\]
	for all $\phi \in H^1(\Gamma(t))$ and almost all $t \in [0,T]$.
	In particular this means that
	\[ |m_*(\matdev u^\delta, \phi)| \leq C \|u^\delta\|_{L^2(\Gamma(t)}\|\phi\|_{H^1(\Gamma(t))} + \|M^\delta(u^\delta) \gradg w^\delta\|_{L^2(\Gamma(t))} \|\phi\|_{H^1(\Gamma(t))}, \]
	and one concludes that, for $\phi \neq 0$,
	\[ \int_0^T \frac{|m_*(\matdev u^\delta, \phi)|^2}{\|\phi\|_{H^1(\Gamma(t))}^2} \leq C \int_0^T \|u^\delta\|_{L^2(\Gamma(t))}^2 + \int_0^T \int_{\Gamma(t)}M^\delta(u^\delta)^2 |\gradg w^\delta|^2,  \]
	and \eqref{reg energy estimate4} follows from this and \eqref{reg energy estimate}, \eqref{reg energy estimate3}.
\end{proof}
\subsubsection{Passage to the limit}
\label{subsubsection: degmob limit}
We now pass to the limit $\delta \rightarrow 0$.
From our bounds in \eqref{reg energy estimate}, \eqref{reg energy estimate2}, and \eqref{reg energy estimate4} we have uniform bounds on $u^\delta$ in $L^\infty_{H^1}$, $\lapg u^\delta$ in $L^2_{L^2}$, and $\matdev u^\delta$ in $L^2_{H^{-1}}$.
In particular, this lets us obtain weakly convergent subsequences of $u^\delta$ such that
\begin{gather*}
	u^\delta \overset{*}{\rightharpoonup} u, \text{ weak-}* \text{ in } L^\infty_{H^1},\\
	\matdev u^\delta {\rightharpoonup} \matdev u, \text{ weakly in } L^2_{H^{-1}},\\
	\lapg u^\delta {\rightharpoonup} \lapg u, \text{ weakly in } L^2_{L^2}.
\end{gather*}
In particular we see that $u \in L^\infty_{H^1} \cap H^1_{H^{-1}} \cap L^2_{H^2}$, and so from the (uniform in time) compact embedding $H^2(\Gamma(t)) \overset{c}{\hookrightarrow} H^1(\Gamma(t))$ and the continuous embedding $H^1(\Gamma(t)) \hookrightarrow H^{-1}(\Gamma(t))$ one may apply the Aubin-Lions result from \cite[Theorem 5.2]{alphonse2023function}  to obtain a strongly convergent subsequence such that
\begin{gather*}
	u^\delta \rightarrow u, \text{ strongly in } L^2_{H^1},
\end{gather*}
and similarly from $H^1(\Gamma(t)) \overset{c}{\hookrightarrow} L^2(\Gamma(t)) \hookrightarrow H^{-1}(\Gamma(t))$
\begin{gather*}
	u^\delta \rightarrow u, \text{ strongly in } L^p_{L^2},
\end{gather*}
for all $p \in (1, \infty)$.
Owing to \eqref{delta control} we know that this limiting function, $u$, is such that $|u(t)| \leq 1$ almost everywhere on $\Gamma(t)$ for almost all $t \in [0,T]$.
Finally, from \eqref{reg energy estimate3} we see that we have 
\[ \mbf{q}^\delta {\rightharpoonup} \mbf{q}, \text{ weakly in } L^2_{L^2}, \]
where $\mbf{q}^\delta = M^\delta(u^\delta) \gradg w^\delta$, for some $\mbf{q} \in L^2_{L^2}$.
We wish to now identify $\mbf{q}$.\\

We consider an arbitrary $\boldsymbol{\phi} \in L^2_{H^1} \cap L^\infty_{L^\infty}$ and look at $\int_0^T \int_{\Gamma(t)} \mbf{q}^\delta \cdot \boldsymbol{\phi}$.
One can show, by using elliptic regularity and the facts that $w^\delta \in L^2_{H^1}, F^\delta(u^\delta) \in L^2_{H^1}$, where the latter of these uses the boundedness of $(F^\delta)''(\cdot)$, that $u^\delta \in L^2_{H^3}$.
However, this bound is not uniform in $\delta$, but will nonetheless be useful in determining $\mbf{q}$.
With this in mind we may use that $w^\delta = -\varepsilon \lapg u^\delta + \frac{1}{\varepsilon} (F^\delta)'(u^\delta)$ almost everywhere to write
\begin{align*}
	\int_0^T \int_{\Gamma(t)} \mbf{q}^\delta \cdot \boldsymbol{\phi} &= \int_0^T \int_{\Gamma(t)} M^\delta(u^\delta) \gradg w^\delta \cdot \boldsymbol{\phi}\\
	&= \int_0^T \int_{\Gamma(t)} M^\delta(u^\delta) \gradg\left( -\varepsilon \lapg u^\delta + \frac{1}{\varepsilon} (F^\delta)'(u^\delta) \right) \cdot \boldsymbol{\phi}.
\end{align*}
Now by using integration by parts it is straightforward to see that
\begin{multline*}
	\int_0^T \int_{\Gamma(t)} \mbf{q}^\delta \cdot \boldsymbol{\phi} = \underbrace{\int_0^T \int_{\Gamma(t)} \varepsilon M^\delta(u^\delta)\lapg u^\delta \divg \boldsymbol{\phi}}_{=: I_1}\\
	+ \underbrace{\int_0^T \int_{\Gamma(t)} \varepsilon (M^\delta)'(u^\delta)\lapg u^\delta \gradg u^\delta \cdot \boldsymbol{\phi}}_{=: I_2}
	+ \underbrace{\int_0^T \int_{\Gamma(t)} \frac{1}{\varepsilon} M^\delta(u^\delta)(F^\delta)''(u^\delta)\gradg u^\delta \cdot \boldsymbol{\phi}}_{=:I_3}.
\end{multline*}
By definition of $\mbf{q}$ we see the left-hand side clearly converges to $\int_0^T \int_{\Gamma(t)} \mbf{q} \cdot \boldsymbol{\phi}$ as $\delta \rightarrow 0$.
We now consider the limit of $I_1, I_2, I_3$ as $\delta \rightarrow 0$.
For $I_1$ we note that $M^\delta(\cdot) \rightarrow M(\cdot)$ uniformly so that one can show
\[ M^\delta(u^\delta) \rightarrow M(u), \]
pointwise almost everywhere on $\mathcal{G}_T$.
Then as $M^\delta(\cdot)$ is uniformly bounded, and $\lapg u^\delta \rightharpoonup \lapg u$ in $L^2_{L^2}$ one has
\[ \int_0^T \int_{\Gamma(t)} \varepsilon M^\delta(u^\delta)\lapg u^\delta \divg \boldsymbol{\phi} \rightarrow \int_0^T \int_{\Gamma(t)} \varepsilon M(u)\lapg u \divg \boldsymbol{\phi}, \]
as $\delta \rightarrow 0$.
Next we consider $I_2$, for which we claim the following, mirroring the analogous proof in \cite{elliott1996cahn}.\\
{\bf Claim: }$(M^\delta)'(u^\delta) \gradg u^\delta \rightarrow M'(u) \gradg u$ strongly in $L^2_{L^2}$.\\
Assuming this claim holds then one clearly has
\[ \int_0^T \int_{\Gamma(t)} \varepsilon (M^\delta)'(u^\delta)\lapg u^\delta \gradg u^\delta \cdot \boldsymbol{\phi} \rightarrow \int_0^T \int_{\Gamma(t)} \varepsilon M'(u)\lapg u \gradg u \cdot \boldsymbol{\phi}, \]
by using the claimed strong convergence, the weak convergence $\lapg u^\delta \rightharpoonup \lapg u$ in $L^2_{L^2}$, and the fact that $\boldsymbol{\phi} \in L^\infty_{L^\infty}$.
To see that this claim does indeed hold we write
\begin{align*}
	&\int_0^T \int_{\Gamma(t)} \left| (M^\delta)'(u^\delta) \gradg u^\delta - M'(u) \gradg u \right|^2\\
	&= \underbrace{\int_0^T \int_{\{ |u(t)| = 1 \}} \left| (M^\delta)'(u^\delta) \gradg u^\delta - M'(u) \gradg u \right|^2}_{=: J_1}\\
	&+ \underbrace{\int_0^T \int_{\{ |u(t)| < 1\}} \left| (M^\delta)'(u^\delta) \gradg u^\delta - M'(u) \gradg u \right|^2}_{=:J_2}.
\end{align*}
It can be shown that $\gradg u(t) = 0$ on the set $\{|u(t)| = 1 \}$ (see for example {\cite[Lemma 7.7]{gilbarg1977elliptic}} for the proof in a Euclidean setting), and hence we find that
\[ J_1 =\int_0^T \int_{\{ |u(t)| = 1 \}} \left| (M^\delta)'(u^\delta) \gradg u^\delta \right|^2 \leq C \int_0^T \int_{\{ |u(t)| = 1 \}} \left|\gradg u^\delta \right|^2, \]
where we have used the uniform bound for $M^\delta(\cdot)$.
Passing to the limit we have that $\gradg u^\delta \rightarrow \gradg u$ strongly in $L^2_{L^2}$ and so this integral vanishes as above.
Next to see that $J_2$ vanishes we note that $(M^\delta)'(u^\delta) \rightarrow M'(u)$ almost everywhere on the set $\{ u(t) < 1 \}$, for almost all $t \in [0,T]$.
Hence from the strong $L^2_{L^2}$ convergence of $\gradg u^\delta$ one finds $(M^\delta)'(u^\delta) \gradg u^\delta \rightarrow M'(u) \gradg u$ almost everywhere on $\{ u(t) < 1 \}$, for almost all $t \in [0,T]$, and so Theorem \ref{generalised dct2} shows that $J_2$ vanishes as $\delta \rightarrow 0$.\\

It remains to discuss the limit of $I_3$.
Here one uses the definition of $M^\delta, F^\delta$ to split this integral up as
\begin{align*}
	I_3 &= \underbrace{\int_{0}^T\int_{\{ |u^\delta| < 1 -\delta \}} M(u^\delta)F''(u^\delta) \gradg u^\delta \cdot \boldsymbol{\phi}}_{=: J_3}\\
	&+ \underbrace{\int_{0}^T\int_{\{ u^\delta \geq 1 -\delta \}} M(1- \delta)F''(1-\delta) \gradg u^\delta \cdot \boldsymbol{\phi}}_{=: J_4}\\
	&+ \underbrace{\int_{0}^T\int_{\{ u^\delta \leq -1 +\delta \}} M(-1+ \delta)F''(-1+\delta) \gradg u^\delta \cdot \boldsymbol{\phi}}_{=:J_5}.
\end{align*}
For $J_3$ we use Theorem \ref{generalised dct1} after observing that
\[M(u^\delta)F''(u^\delta) \gradg u^\delta \rightarrow M(u)F''(u) \gradg u\]
pointwise almost everywhere on $\Gamma(t)$.
For $J_4$ we note
\begin{align*}
	&\lim_{\delta \rightarrow 0} \int_{0}^T\int_{\{ u^\delta \geq 1 -\delta \}} M(1- \delta)F''(1-\delta) \gradg u^\delta \cdot \boldsymbol{\phi}\\
	&= \left(\lim_{\delta \rightarrow 0} M(1- \delta)F''(1-\delta) \right) \left( \lim_{\delta \rightarrow 0} \int_{0}^T\int_{\{ u^\delta \geq 1 -\delta \}} \gradg u^\delta \cdot \boldsymbol{\phi} \right)\\
	&=\left(\lim_{\delta \rightarrow 0} M(1- \delta)F''(1-\delta) \right) \left(  \int_{0}^T\int_{\{ u = 1 \}} \gradg u \cdot \boldsymbol{\phi}\right)\\
	&= 0,
\end{align*}
and similarly we find that $\lim_{\delta \rightarrow 0} J_5 = 0$.
Thus this establishes the existence of a solution in the sense of \eqref{weakeqn3}.

For a constant mobility it is known that the solution of the Cahn-Hilliard equation with a logarithmic potential does not attain the pure phases $u = \pm 1$ on a set of positive measure, as shown in \cite{caetano2021cahn,caetano2023regularization}.
This is not typically true for a degenerate mobility, but one can show the following result.

\begin{lemma}
	\label{delta control2}
	If $\Psi$ is such that $\lim_{r \rightarrow \pm 1} \Psi(r) = \infty$, then the solution $u$ as constructed above is such that $|u| < 1$ almost everywhere on $\Gamma(t)$ for almost all $t \in [0,T]$.
\end{lemma}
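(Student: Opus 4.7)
The plan is to leverage the uniform-in-$\delta$ estimate $\sup_{t\in[0,T]} \int_{\Gamma(t)} \Psi^\delta(u^\delta) \leq C$ from \eqref{reg energy estimate2} together with Fatou's lemma, using the blow-up hypothesis on $\Psi$ at $\pm 1$ to rule out $|u|=1$ on a positive-measure subset of $\mathcal{G}_T$.

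First I would invoke the strong convergence $u^\delta \to u$ in $L^p_{L^2}$ established in the previous subsection, and pass to a further subsequence to obtain pointwise convergence $u^\delta(x,t) \to u(x,t)$ for almost every $(x,t) \in \mathcal{G}_T$. On the set $\{|u(x,t)| < 1\}$, for $\delta$ small enough the pointwise convergence forces $u^\delta(x,t) \in (-1+\delta, 1-\delta)$, where $\Psi^\delta \equiv \Psi$, so continuity of $\Psi$ on $(-1,1)$ gives $\Psi^\delta(u^\delta(x,t)) \to \Psi(u(x,t))$.

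The delicate step is to show that on $A := \{|u(x,t)| = 1\}$ one has $\Psi^\delta(u^\delta(x,t)) \to +\infty$. Assume $u(x,t) = 1$ (the symmetric case is identical). Given $N > 0$, by hypothesis there is $\eta > 0$ with $\Psi(r) > N$ on $(1-\eta, 1)$. For $\delta$ sufficiently small, and assuming $u^\delta(x,t) > 1 - \eta/2$ (which holds eventually by pointwise convergence), I split into two cases: if $u^\delta(x,t) < 1-\delta$ then $\Psi^\delta(u^\delta(x,t)) = \Psi(u^\delta(x,t)) > N$; if $u^\delta(x,t) \geq 1-\delta$, then the Taylor form of $\Psi^\delta$ on $[1-\delta, \infty)$, together with non-negativity of $\Psi'$ on $[0, 1)$, gives $\Psi^\delta(u^\delta(x,t)) \geq \Psi(1-\delta)$, which exceeds $N$ for $\delta$ small.

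Combining these pointwise statements, and denoting by $\tilde{\Psi}$ the extension of $\Psi$ to $[-1,1]$ by $+\infty$ at $\pm 1$, Fatou's lemma on $\mathcal{G}_T$ yields
\[ \int_0^T \int_{\Gamma(t)} \tilde{\Psi}(u) \leq \liminf_{\delta \to 0} \int_0^T \int_{\Gamma(t)} \Psi^\delta(u^\delta) \leq CT. \]
Finiteness of the left-hand side then forces $|A| = 0$, i.e.\ $|u(t)| < 1$ almost everywhere on $\Gamma(t)$ for almost every $t \in [0,T]$. The main obstacle is precisely the divergence statement on $A$, which is subtle because the regularised $\Psi^\delta$ need not coincide with $\Psi$ near $\pm 1$: one must control the quadratic extension carefully and exploit the monotonicity of $\Psi'$ to reduce the bound on the extension to a bound involving $\Psi(1-\delta)$, which then diverges by hypothesis.
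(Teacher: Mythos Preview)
Your proposal is correct and follows essentially the same route as the paper: both invoke the uniform bound on $\int \Psi^\delta(u^\delta)$, pass to a pointwise a.e.\ convergent subsequence, argue that $\Psi^\delta(u^\delta)\to\Psi(u)$ where $|u|<1$ and $\Psi^\delta(u^\delta)\to\infty$ where $|u|=1$, and conclude via Fatou. The only cosmetic differences are that the paper applies Fatou at fixed $t$ rather than over $\mathcal{G}_T$, and it packages your two-case divergence argument as the single inequality $\Psi^\delta(u^\delta)\geq\min\{\Psi(1-\delta),\Psi(u^\delta)\}$; your version spells out the monotonicity of $\Psi'$ more explicitly, which is arguably cleaner.
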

\begin{proof}
	This result follows from the same logic as in \cite{caetano2021cahn,elliott1996cahn}.
	We recall that we have shown for almost all $t \in [0,T]$.
	\[ \int_{\Gamma(t)} \Psi^\delta (u^\delta) \leq C, \]
	for some constant independent of $\delta$.
	From Fatou's lemma one has that
	\[\int_{\Gamma(t)} \liminf_{\delta \rightarrow 0} \Psi^\delta(u^\delta) \leq \liminf_{\delta \rightarrow 0} \int_{\Gamma(t)} \Psi^\delta(u^\delta) \leq C.\]
	We now claim that
	\[\liminf_{\delta \rightarrow 0} \Psi^\delta(u^\delta) = \begin{cases}
		\Psi(u), & |u| < 1,\\
		\infty, & \mathrm{otherwise,}
	\end{cases}\]
	almost everywhere on $\Gamma(t)$ and for almost all $t \in [0,T]$.
	From this claim it follows that since $\int_{\Gamma(t)} \liminf_{\delta \rightarrow 0} \Psi^\delta(u^\delta) < \infty$ one must have that $|u| < 1$ almost everywhere on $\Gamma(t)$ for almost all $t \in [0,T]$.\\
	
	We now prove the above claim.
	Firstly, we recall that we have pointwise convergence of $u^\delta \rightarrow u$ almost everywhere on $\Gamma(t)$ for almost all $t$.
	To begin we assume we have a point $(x,t)$ such that the pointwise convergence holds and $|u(x,t)| < 1$.
	We fix $\gamma > 0$ to be some arbitrarily small constant.
	By construction one can find $\delta_1$ such that $\Psi(u(x,t)) = \Psi^\delta(u(x,t))$ for all $\delta < \delta_1$.
	Similarly by continuity of $\Psi$ on $(-1,1)$ one can find $\delta_2$ such that
	\[ |r - u(x,t)| < \delta_2 \Rightarrow |\Psi(r) - \Psi(u(x,t))| < \gamma. \]
	Lastly by pointwise convergence $u^\delta(x,t) \rightarrow u(x,t)$ for a subsequence of $\delta \rightarrow 0$ one has $\delta_3$ such that
	\[\delta < \delta_3 \Rightarrow |u^\delta(x,t) - u(x,t)| < \delta_2.\]
	Thus taking $\delta < \min\{\delta_1, \delta_2, \delta_3\}$ one finds
	\[ |\Psi^\delta(u^\delta(x,t)) - \Psi(u(x,t))| < \gamma, \]
	where $\gamma$ was arbitrary.\\
	
	Now we suppose that $|u(x,t)| = 1$.
	If $u(x,t) =1$ then one has
	\[ \Psi^\delta(u^\delta) \geq \min\{ \Psi(1-\delta), \Psi(u^\delta(x,t)) \} \rightarrow \infty,  \]
	as $\delta \rightarrow 0$.
	A similar argument holds when $u(x,t) = -1$.
\end{proof}

\subsection{Strong uniqueness away from the pure phases}
Here we use similar arguments to the proof of Theorem \ref{weak strong uniqueness1} to show uniqueness of strong solutions, with some restriction on the initial data.
We also assume that the function $r \mapsto M(r)F''(r)$ is locally Lipschitz continuous.
This assumption is not particular restrictive and holds, for example, for the logarithmic potential \eqref{log potential} and mobility functions $M(r) = (1-r^2)^k$ for $k \geq 1$.
\begin{definition}
	We call a function $U$ a strong solution if it is a weak solution to \eqref{weakeqn3} and one has additionally that $U \in H^1_{L^2}$ and $\gradg \lapg U \in L^2_{L^2}$.
\end{definition}

Here we assume that $|u_0| \leq 1 - \gamma_1$, for some small constant $\gamma_1 > 0$ so that $\invoperator{u_0}$ is a well-defined operator, and that a solution $u$ with initial data $u_0$ is such that $|u| \leq 1 - \gamma_2$, almost everywhere on $\bigcup_{t \in [0,t^*]} \Gamma(t) \times \{ t \}$, for some $t^* > 0$, and a small constant $\gamma_2 >0$.

\begin{theorem}
	\label{weak strong uniqueness2}
	Let $U_1, U_2$ be strong solutions of \eqref{weakeqn3} with the same initial data, $u_0$, such that $|u_0| \leq 1 - \gamma_1$ almost everywhere on $\Gamma_0$ for some constant $\gamma_1 > 0$.
	Then if there exists some time interval $[0,t^*] \subseteq [0,T]$ such that $|U_i| \leq 1 - \gamma_2$ almost everywhere on $\Gamma(t)$ for almost all $t \in [0,t^*]$, for some constant $\gamma_2 > 0$ and $i=1,2$, then we have $U_1 = U_2$ almost everywhere on $\mathcal{G}_{t^*}$.
\end{theorem}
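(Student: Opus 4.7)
The plan is to mirror the argument used in the proof of Theorem \ref{weak strong uniqueness1}, exploiting the fact that on $[0,t^*]$ the solutions $U_1, U_2$ remain strictly inside $(-1,1)$, so the degenerate mobility is bounded below away from zero and the potential behaves regularly. Since $|U_i| \leq 1 - \gamma_2$, one has $M(U_i) \geq M_\gamma := \min_{|r| \leq 1 - \gamma_2} M(r) > 0$, and $M$, $F'$, $F''$, $MF''$ are all Lipschitz on $[-(1-\gamma_2), 1-\gamma_2]$. In effect, on $[0,t^*]$ the equation behaves like a Cahn-Hilliard equation with a positive, Lipschitz mobility and regular potential, i.e. precisely the setting of Theorem \ref{weak strong uniqueness1}. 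One could even pass to a modified pair $\tilde M, \tilde F_1$ which agree with $M,F_1$ on $[-(1-\gamma_2/2), 1-\gamma_2/2]$ but satisfy Assumption \ref{potential assumptions} globally and remain bounded below by $M_\gamma/2$, so that the desired uniqueness reduces directly to an application of Theorem \ref{weak strong uniqueness1} after checking that both solutions do solve the modified problem on $[0,t^*]$.

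First, for $i=1,2$ I would define $W_i := -\varepsilon \lapg U_i + \frac{1}{\varepsilon} F'(U_i)$, which by the strong solution regularity ($\gradg \lapg U_i \in L^2_{L^2}$) and the Lipschitz bound on $F'$ over the range of $U_i$ lies in $L^2(0,t^*; H^1(\Gamma(t)))$. Integration by parts against test functions in $H^1(\Gamma(t))$ then shows $(U_i, W_i)$ satisfies \eqref{weakeqn1}--\eqref{weakeqn2} on $[0,t^*]$. Testing \eqref{weakeqn1} with $\phi \equiv 1$ and using the transport theorem gives the conservation identity $\frac{d}{dt} \int_{\Gamma(t)} U_i = 0$, so $\bar U := U_1 - U_2$ satisfies $\int_{\Gamma(t)} \bar U = 0$ for all $t \in [0,t^*]$. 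Thanks to $M(U_1) \geq M_\gamma > 0$, the operator $\invoperator{U_1}\bar U$ from \eqref{invoperator equation} is well-defined on $[0,t^*]$ and produces the weak norm $\|\bar U\|_{U_1}$.

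Now set $\bar W := W_1 - W_2$ and subtract the two copies of \eqref{weakeqn1}--\eqref{weakeqn2}. Decompose the nonlinear term as $\hat a(M(U_1),\bar W,\phi) + \hat a(M(U_1)-M(U_2), W_2, \phi)$, take $\phi = \invoperator{U_1}\bar U$ in the subtracted \eqref{weakeqn1}, and take $\phi = \bar U$ in the subtracted \eqref{weakeqn2}. Following Theorem \ref{weak strong uniqueness1} verbatim, the transport theorem converts the time-derivative pairing into $\tfrac{1}{2}\tfrac{d}{dt}\|\bar U\|_{U_1}^2$ plus commutator terms controlled by $\matdev U_1 \in L^2_{L^2}$ (finite by the strong solution assumption $U_1 \in H^1_{L^2}$), the elliptic regularity estimate \eqref{elliptic regularity}, and the interpolation inequality \eqref{H14 interpolation}. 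The coupling term $m(\bar W,\bar U)$ is bounded below using monotonicity of $F_1'$ and the local Lipschitz bound on $F_2'$ (which now holds trivially on the bounded range of $U_i$). The delicate term $\hat a(M(U_1)-M(U_2), W_2, \invoperator{U_1}\bar U)$ is estimated via the Lipschitz bound on $M$ on $[-(1-\gamma_2), 1-\gamma_2]$, Hölder's inequality, and Lemma \ref{brezisgallouet} to control $\|\bar U\|_{L^\infty}$ by a logarithm of $\|\bar U\|_{H^2}$. As in Theorem \ref{weak strong uniqueness1}, one justifies the computation rigorously by replacing $U_1$ in $\invoperator{U_1}$ by a sequence of smooth approximants $\eta_n \to U_1$ in $L^2_{H^2} \cap H^1_{L^2} \cap L^\infty_{H^1}$, using the continuity of $\xi \mapsto \invoperator{\xi}$, and passing to the limit in the integrated inequality.

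The main obstacle is ensuring all ingredients from the proof of Theorem \ref{weak strong uniqueness1} remain available under the present assumptions: (i) well-posedness of $\invoperator{U_1}$, which holds because $M(U_1) \geq M_\gamma > 0$ on $[0,t^*]$; (ii) the Lipschitz continuity of $M$ and $F_2'$, which is only guaranteed on the localised range of $U_1, U_2$ rather than globally; and (iii) integrability of the resulting coefficients $K_1, K_2$ in the Bihari-LaSalle inequality of Lemma \ref{gronwalltype2}, which follows from $U_i \in L^\infty_{H^1} \cap H^1_{L^2}$, the embedding $H^1(\Gamma(t)) \hookrightarrow L^4(\Gamma(t))$, and $W_i \in L^2_{H^1}$. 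Once these ingredients are in place, the integrated form of the nonlinear Grönwall argument gives $\|\bar U(t)\|_{U_1}^2 \equiv 0$ on $[0,t^*]$ (using $\bar U(0) = 0$), and hence $U_1 = U_2$ almost everywhere on $\mathcal{G}_{t^*}$.
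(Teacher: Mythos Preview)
Your proposal is correct and uses the same underlying toolkit as the paper (the operator $\invoperator{U_1}$, the weak norm $\|\cdot\|_{U_1}$, the Brezis--Gallou\"et inequality, and the nonlinear Gr\"onwall lemma), but the overall organisation is different. The paper works directly with the single-equation weak formulation \eqref{weakeqn3}: it subtracts the two equations, splits
\[
\hat{c}(U_1,M(U_1),\phi)-\hat{c}(U_2,M(U_2),\phi)=\hat{c}(\bar U,M(U_1),\phi)+\hat{c}(U_2,M(U_1)-M(U_2),\phi),
\]
and similarly for the $\hat{a}(G(\cdot),\cdot,\phi)$ term with $G(r)=M(r)F''(r)$, then tests with $\invoperator{U_1}\bar U$. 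The crucial identity $\hat{c}(\bar U,M(U_1),\invoperator{U_1}\bar U)=\|\gradg\bar U\|_{L^2}^2$ produces the coercive term, and the cross term $\hat{c}(U_2,M(U_1)-M(U_2),\invoperator{U_1}\bar U)$ is integrated by parts to $\int (M(U_1)-M(U_2))\gradg\lapg U_2\cdot\gradg\invoperator{U_1}\bar U$, where the hypothesis $\gradg\lapg U_2\in L^2_{L^2}$ enters explicitly.

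Your route instead introduces the chemical potentials $W_i=-\varepsilon\lapg U_i+\tfrac{1}{\varepsilon}F'(U_i)$, verifies that $(U_i,W_i)$ solves \eqref{weakeqn1}--\eqref{weakeqn2} on $[0,t^*]$ (which indeed needs $\gradg\lapg U_i\in L^2_{L^2}$ so that $W_i\in L^2_{H^1}$), and then invokes Theorem~\ref{weak strong uniqueness1}. This is an efficient reduction: once the separation $|U_i|\le 1-\gamma_2$ renders the mobility uniformly positive and the potential regular, the degenerate problem is literally an instance of the positive-mobility problem, so nothing needs to be redone. The trade-off is that in your version the role of the strong-solution hypothesis on $U_2$ is hidden inside the requirement $W_2\in L^2_{H^1}$, whereas the paper's direct argument makes it visible that $\gradg\lapg U_2\in L^2_{L^2}$ is exactly what is needed to control the dangerous cross term.
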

\begin{proof}
	As before we present a formal argument for the sake of clarity, which can be rigorously justified as we did for Theorem \ref{weak strong uniqueness1}.
	For ease of notation define $G(r):= M(r)F''(r)$.
	Define $\bar{U} = U_1 - U_2$, then one finds
	\begin{multline}
		m_*(\matdev \bar{U}, \phi) + g(\bar{U}, \phi) - \varepsilon \hat{c}(U_1,M(U_1),\phi) + \varepsilon \hat{c}(U_2,M(U_2),\phi)\\
		- \frac{1}{\varepsilon}\hat{a}(G(U_1),U_1,\phi) + \frac{1}{\varepsilon}\hat{a}(G(U_2),U_2,\phi) = 0, \label{unique5}
	\end{multline}
	where one then rewrites the final four terms as
	\begin{gather*}
		\hat{c}(U_1,M(U_1),\phi) - \hat{c}(U_2,M(U_2),\phi) = \hat{c}(\bar{U},M(U_1),\phi) + \hat{c}(U_2,M(U_1)-M(U_2),\phi),\\
		\hat{a}(G(U_1),U_1,\phi)- \hat{a}(G(U_2),U_2,\phi) = \hat{a}(G(U_1)- G(U_2),U_1,\phi)+ \hat{a}(G(U_2),\bar{U},\phi).
	\end{gather*}
	Now by our smallness assumption we find that $\invoperator{U_1}\bar{U}$ is well defined and so we may test \eqref{unique5} with $\phi = \invoperator{U_1}\bar{U} \in L^2_{H^2}$ to obtain
	\begin{multline*}
		m_*(\matdev \bar{U}, \invoperator{U_1}\bar{U}) + g(\bar{U}, \invoperator{U_1}\bar{U}) - \varepsilon \hat{c}(\bar{U},M(U_1),\invoperator{U_1}\bar{U})\\
		= \varepsilon \hat{c}(U_2,M(U_1)-M(U_2),\invoperator{U_1}\bar{U})
		- \frac{1}{\varepsilon}\hat{a}(G(U_1)- G(U_2),U_1,\invoperator{U_1}\bar{U})\\
		+ \frac{1}{\varepsilon} \hat{a}(G(U_2),\bar{U},\invoperator{U_1}\bar{U}),
		\end{multline*}
	We recall from the proof of Theorem \ref{weak strong uniqueness1} that if one assumes that $\invoperator{U_1}\bar{U} \in H^1_{H^1}$ then
	\begin{multline}
		m_*(\matdev \bar{U}, \invoperator{U_1}\bar{U}) + g(\bar{U}, \invoperator{U_1}\bar{U}) = \frac{1}{2} \frac{d}{dt} \|\bar{U}\|_{U_1}^2 + \frac{1}{2}\hat{b}(M(U_1),\invoperator{U_1} \bar{U},\invoperator{U_1} \bar{U})\\+ \frac{1}{2} \hat{a}(M'(U_1) \matdev U_1, \invoperator{U_1} \bar{U}, \invoperator{U_1} \bar{U}),\label{unique6}\end{multline}
	where the two rightmost terms are handled in the exact same manner as in the proof of Theorem \ref{weak strong uniqueness1}.
	Next we observe that
	\begin{align}
		\hat{c}(\bar{U},M(U_1),\invoperator{U_1}\bar{U}) &= \int_{\Gamma(t)} \lapg \bar{U} \gradg \cdot (M(U_1) \gradg \invoperator{U_1}\bar{U}) \notag \\&
		= -\int_{\Gamma(t)} \bar{U}\lapg \bar{U}= \int_{\Gamma(t)} |\gradg \bar{U}|^2 \label{unique7}
	\end{align}
	where we have used the fact that $ -\gradg \cdot (M(U_1) \gradg \invoperator{U_1}\bar{U}) = \bar{U}$ almost everywhere on $\Gamma(t)$.\\
	
	By H\"older's inequality and the assumption that $G(\cdot)$ is (locally) Lipschitz continuous one finds that
	\[ |\hat{a}(G(U_1)- G(U_2),U_1,\invoperator{U_1}\bar{U})| \leq C \|\bar{U}\|_{L^\infty(\Gamma(t))} \|\gradg U_1\|_{L^2(\Gamma(t))} \| \bar{U}\|_{U_1}. \]
	By using Lemma \ref{brezisgallouet} to control the $\|\bar{U}\|_{L^\infty(\Gamma(t))}$ term, and then using Young's inequality one obtains
	\begin{multline}
		\left|\frac{1}{\varepsilon}\hat{a}(G(U_1)- G(U_2),U_1,\invoperator{U_1}\bar{U})\right|\leq C\|\gradg U_1\|_{L^2(\Gamma(t))}^2 \|\bar{U} \|_{U_1}^2 + \frac{\varepsilon}{4}\|\gradg\bar{U}\|_{L^2(\Gamma(t))}^2\\
		+C\|\gradg U_1\|_{L^2(\Gamma(t))} \|\bar{U} \|_{U_1}\|\gradg\bar{U}\|_{L^2(\Gamma(t))}  \log\left( 1 + \frac{C\|\bar{U}\|_{H^2(\Gamma(t))}}{\|\gradg \bar{U}\|_{L^2(\Gamma(t))}} \right)^{\frac{1}{2}}. \label{unique8}
	\end{multline}
	Next using the boundedness of $G(\cdot)$ one finds
	\begin{align}
		\frac{1}{\varepsilon} \hat{a}(G(U_2),\bar{U},\invoperator{U_1}\bar{U}) \leq C \|\bar{U} \|_{U_1}^2 + \frac{\varepsilon}{4}\|\gradg\bar{U}\|_{L^2(\Gamma(t))}^2. \label{unique9}
	\end{align}
	The final, and most troublesome, term to deal with is $\hat{c}(U_2,M(U_1)-M(U_2),\invoperator{U_1}\bar{U})$.
	Using integration by parts, and the Lipschitz continuity of $M(\cdot)$, one has
	\begin{align*}
		\hat{c}(U_2,M(U_1)-M(U_2),\invoperator{U_1}\bar{U}) &= \int_{\Gamma(t)} (M(U_1)- M(U_2)) \gradg \lapg U_2 \cdot \gradg \invoperator{U_1}\bar{U} \\
		& \leq C\|\bar{U}\|_{L^\infty(\Gamma(t))} \|\gradg \lapg U_2\|_{L^2(\Gamma(t))} \|\bar{U}\|_{U_1}.
	\end{align*}
	One then uses Lemma \ref{brezisgallouet} to control the $\|\bar{U}\|_{L^\infty(\Gamma(t))}$ term, as we did above, and Young's inequality to see that
	\begin{multline}
		\varepsilon \hat{c}(U_2,M(U_1)-M(U_2),\invoperator{U_1}\bar{U}) \leq \|\gradg \lapg U_2\|_{L^2(\Gamma(t))}^2 \|\bar{U} \|_{U_1}^2 + \frac{\varepsilon}{4}\|\gradg\bar{U}\|_{L^2(\Gamma(t))}^2\\
		+C\|\gradg \lapg U_2\|_{L^2(\Gamma(t))} \|\bar{U} \|_{U_1}\|\gradg\bar{U}\|_{L^2(\Gamma(t))}  \log\left( 1 + \frac{C\|\bar{U}\|_{H^2(\Gamma(t))}}{\|\gradg \bar{U}\|_{L^2(\Gamma(t))}} \right)^{\frac{1}{2}} \label{unique10}.
	\end{multline}
	Uniqueness follows by using \eqref{unique6}--\eqref{unique10} in \eqref{unique5} and concluding by using Lemma \ref{gronwalltype}, similar to the proof of Theorem \ref{weak strong uniqueness1}.
\end{proof}
\begin{remark}
	\begin{enumerate}
    \item Our assumption that $|U| \leq 1 - \gamma_2$ is known as \emph{separation from the pure phases}, and has been shown to occur on an evolving surface when $M \equiv 1$ --- see \cite{caetano2023regularization}.
    To the authors' knowledge there are no such results of this kind for a degenerate mobility function, although a recent result of this kind has been shown for a positive, variable mobility in \cite{ConGalGat24}.
    Despite this, Lemma \ref{delta control2} shows that $|U| < 1$ almost everywhere, provided that $\Psi$ blows up at $\pm 1$.
    This is the case when $M(r) = (1-r^2)^k$ for $k \geq 2$, as remarked in \cite{elliott1996cahn}.
		\item If one additionally assumes that $F''$ is bounded then the condition that $\gradg \lapg U \in L^2_{L^2}$ actually follows from the assumption that $|U| \leq 1 - \gamma_2$ almost everywhere.
		To see this one can argue as in \cite[Theorem 3]{elliott1996cahn} to show that
		\[ \int_0^T \int_{\Gamma(t)} M^\delta(u^\delta) |\gradg \lapg u^\delta|^2 < C, \]
		for $C$ independent of $\delta$.
		Then if $|U| \leq 1 - \gamma_2$ and $U(t) \in C^0(\Gamma(t))$ for almost all $t \in [0,T]$, one finds that $M(U) > 0$ for almost all $t \in [0,T]$, and so one can use lower semicontinuity to see $\gradg \lapg U \in L^2_{L^2}$.
		As such, this result only requires $U_1$ to be a strong solution --- thus Theorem \ref{weak strong uniqueness2} becomes a weak-strong uniqueness result rather than strong-strong uniqueness.
	\end{enumerate}
\end{remark}

The main take away from this result is that although the weak formulation of \eqref{weakeqn3} requires a stronger class of test functions than \eqref{weakeqn1}, \eqref{weakeqn2} one can still argue along the same lines as in Theorem \ref{weak strong uniqueness1}, given sufficiently strong solutions.
Moreover, this result implies that if one wishes to investigate potential non-uniqueness phenomena for the degenerate Cahn-Hilliard equation, one should look at initial data which attains $\pm 1$ and/or a mobility such that $\lim_{r \rightarrow \pm 1} \Psi(r) < \infty$.
This is the case in the numerical experiments of \cite{barrett1999degenerate}.

\subsection{A deep quench limit of the logarithmic potential}
Here we briefly discuss a special case where the mobility is given by $M(r) = 1-r^2$ and the potential is
\[F_\theta(r) = \frac{\theta}{2} \left((1+r)\log(1+r) + (1-r)\log(1-r)\right) + \frac{1-r^2}{2},\]
for $\theta \in (0,1)$.
In particular we study the limit $\theta \rightarrow 0$, similar to \cite{caetano2021cahn} where the authors show that (for a constant mobility) this limit leads to a unique solution of a variational inequality where the potential is given by the double obstacle potential
\[F_o(r) := \begin{cases}
	\frac{1-r^2}{2}, & r \in [-1,1],\\
	\infty, & |r| > 1.
\end{cases}\]
This deep quench limit has also been studied in \cite{cahn1996cahn,elliott1996cahn}.\\

We denote the solution of the degenerate problem, in the sense of \eqref{weakeqn3}, with a fixed $\theta \in (0,1)$ as $u^\theta$, and note from \eqref{reg energy estimate}, \eqref{reg energy estimate2}, \eqref{reg energy estimate4} that the sequence $u^\theta$ is uniformly bounded (independent of $\theta$) in $H^1_{H^{-1}} \cap L^\infty_{H^1} \cap L^2_{H^2}$.
Denoting the corresponding weak (or weak-$*$) limit by $u$ one can repeat arguments from the previous section to obtain that
\begin{multline*}
	\lim_{\theta \rightarrow 0} m_*(\matdev u^\theta, \phi) + g(u^\theta, \phi) - \varepsilon \hat{c}(u^\theta,M(u^\theta),\phi) - \frac{1}{\varepsilon}\hat{a}(M(u^\theta)F_\theta''(u^\theta),u^\theta,\phi)\\
	= m_*(\matdev u, \phi) + g(u, \phi) - \varepsilon \hat{c}(u,M(u),\phi) + \frac{1}{\varepsilon} \hat{a}(M(u),u,\phi),
\end{multline*}
for some subsequence of $\theta \rightarrow 0$, since 
\[ \hat{a}(M(u^\theta)F_\theta''(u^\theta),u^\theta,\phi) = \theta a(u^\theta, \phi) - \hat{a}(M(u^\theta),u^\theta,\phi) \rightarrow - \hat{a}(M(u^\theta),u^\theta,\phi). \]
From this we conclude that $u$ solves
\[m_*(\matdev u, \phi) + g(u, \phi) - \varepsilon \hat{c}(u,M(u),\phi) + \frac{1}{\varepsilon} \hat{a}(M(u),u,\phi) = 0,\]
for all $\phi \in H^2(\Gamma(t))$, almost all $t \in [0,T]$, and such that $u(0) = u_0$ almost everywhere on $\Gamma_0$.
We note that our uniqueness result does not necessarily mean that the whole sequence $(u^\theta)_\theta$ converges to $u$ as we have not shown the hypotheses of Theorem \ref{weak strong uniqueness2} hold for $u$.

\subsection*{Acknowledgments}
Thomas Sales is supported by the Warwick Mathematics Institute Centre for Doctoral Training, and gratefully acknowledges funding from the University of Warwick and the UK Engineering and Physical Sciences Research Council (Grant number:EP/TS1794X/1).
For the purpose of open access, the author has applied a Creative Commons Attribution (CC BY) licence to any Author Accepted Manuscript version arising from this submission.

\appendix
\section{An integral form of Lemma \ref{gronwalltype}}
In this appendix we outline how one establishes an appropriate integral form of Lemma \ref{gronwalltype}, which is required to justify the proofs of Theorem \ref{weak strong uniqueness1} and Theorem \ref{weak strong uniqueness2}.

\begin{lemma}
	\label{gronwalltype2}
	Let $m_1, m_2, S$ be non-negative functions on $(0,T)$ such that $m_1, S \in L^1(0,T)$ and $m_2 \in L^2(0,T)$, with $S > 0$ a.e. on $(0,T)$.
	Now suppose $f,g$ are non-negative, integrable functions on $(0,T)$, and $f$ is continuous on $[0,T)$, such that
	\begin{align*}
		f(t) + \int_0^t g(s) \, ds \leq \int_0^t m_1(s)f(s) \, ds + \int_0^t m_2(s)\left( f(s)g(s) \log^+\left( \frac{S(s)}{g(s)} \right) \right)^{\frac{1}{2}} \, ds,
	\end{align*}
	holds on $(0,T)$, and $f(0) = 0$.
	Then $f \equiv 0$ on $[0,T)$.
\end{lemma}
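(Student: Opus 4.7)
The plan is to reduce Lemma \ref{gronwalltype2} to Lemma \ref{gronwalltype} by constructing an absolutely continuous majorant of $f$ that inherits the required pointwise differential inequality. The obstacle to invoking Lemma \ref{gronwalltype} directly on $f$ is that $f$ is only assumed continuous, not absolutely continuous, so its derivative may fail to exist in the sense needed.

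The natural candidate for the majorant is
$$F(t) := \int_0^t \Bigl[ m_1(s)f(s) + m_2(s)\bigl(f(s)g(s)\log^+(S(s)/g(s))\bigr)^{1/2} - g(s) \Bigr]\,ds.$$
First I would verify that the integrand is locally integrable on $[0,T)$: on each $[0,T']$ with $T'<T$, the continuity of $f$ gives local boundedness, $m_1, g \in L^1$, and for the middle term Cauchy--Schwarz together with $m_2 \in L^2$ and the elementary estimate $g \log^+(S/g) \le S$ yield integrability. Hence $F$ is absolutely continuous on $[0,T)$ with $F(0)=0$. The hypothesis of Lemma \ref{gronwalltype2} can be rewritten as $f(t) \le F(t)$, and since $f \ge 0$ we immediately obtain the sandwich $0 \le f \le F$, which in particular gives $F \ge 0$.

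Next, differentiating almost everywhere and rearranging,
$$F'(t) + g(t) = m_1(t)f(t) + m_2(t)\bigl(f(t)g(t)\log^+(S(t)/g(t))\bigr)^{1/2}.$$
The right-hand side is non-decreasing in its first argument (for $f \ge 0$), so substituting $f \le F$ yields
$$F'(t) + g(t) \le m_1(t)F(t) + m_2(t)\bigl(F(t) g(t)\log^+(S(t)/g(t))\bigr)^{1/2}, \quad \text{a.e. on } (0,T).$$
This is precisely the pointwise hypothesis of Lemma \ref{gronwalltype} applied to the pair $(F,g)$, with $F$ absolutely continuous, non-negative, and $F(0)=0$.

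Invoking Lemma \ref{gronwalltype} then gives $F \equiv 0$ on $[0,T)$, and the sandwich $0 \le f \le F$ forces $f \equiv 0$ on $[0,T)$. The main (mild) obstacle in the argument is the bookkeeping to ensure $F$ is absolutely continuous and non-negative; the absolute continuity uses only the integrability estimates above, while the non-negativity is automatic from $F \ge f \ge 0$. No new analytic ideas are required beyond this standard device of replacing a merely continuous function by its absolutely continuous integral majorant.
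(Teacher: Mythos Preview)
Your argument is correct and is genuinely different from the paper's proof. The paper does \emph{not} reduce Lemma~\ref{gronwalltype2} to Lemma~\ref{gronwalltype}; instead it reproves the result from scratch in the integral setting, mimicking the argument of \cite{li2016tropical}: it invokes the elementary bound $\log^+(z)\le z^\sigma/(\sigma e)$ for $\sigma\in(0,1)$, uses Young's inequality to absorb the $g$-integral, then runs a contradiction argument based on the Bihari--LaSalle inequality and passes to the limit $\sigma\to 0^+$.

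Your route is shorter and more transparent: by passing to the absolutely continuous majorant $F$, you immediately inherit the pointwise differential inequality and can invoke Lemma~\ref{gronwalltype} as a black box. The only checks are the ones you mention --- local integrability of the integrand (which you handle via Cauchy--Schwarz and the correct bound $g\log^+(S/g)\le S$), $F\ge f\ge 0$, and $F(0)=0$. The paper's approach, by contrast, is self-contained (it does not rely on the differential lemma being available) but at the cost of redoing the entire $\sigma$-parametrised analysis. Your reduction is the cleaner way to organise the proof when Lemma~\ref{gronwalltype} is already in hand.
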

\begin{proof}
    The proof of this result is largely the same as that of \cite[Lemma 2.2]{li2016tropical}.
    To begin we recall that for $\sigma \in (0,\infty)$ one has
    \[ \log^+(z) \leq \frac{z^\sigma}{\sigma e}, \text{ for } z \in (0,\infty). \]
    Using this inequality one observes that we have
    \[ f(t) + \int_0^t g(s) \, ds \leq \int_0^t m_1(s)f(s) \, ds + \int_0^t m_2(s) S(s)^\frac{\sigma}{2} g(s)^{\frac{1 - \sigma}{2}} \left( \frac{f(s)}{e \sigma} \right)^\frac{1}{2} \, ds. \]
    Now using Young's inequality we find that for any $\sigma \in (0,1)$ one has, for $a,b \geq 0$,
    \[ ab \leq \frac{1 - \sigma}{2} a^{\frac{2}{1 - \sigma}} + \frac{1 + \sigma}{2} b^{\frac{2}{1 + \sigma}}, \]
    which we use in the above to obtain
    \begin{align*}
        f(t) + \int_0^t g(s) \, ds &\leq \int_0^t m_1(s)f(s) \, ds + \frac{1 - \sigma}{2}\int_0^t g(s) \, ds\\
        &+ \frac{1 + \sigma}{2}\int_0^t m_2(s)^{\frac{2}{1 + \sigma}} S(s)^\frac{\sigma}{1 + \sigma} \left( \frac{f(s)}{e \sigma} \right)^\frac{1}{1 + \sigma} \, ds.
    \end{align*}
    We note that this final integral is well defined by using H\"older's inequality since $f \in L^\infty(0,T), m_2 \in L^2(0,T), S \in L^1(0,T)$.
    Since $\sigma \in (0,1)$ one then finds that
    \begin{align*} f(t) \leq \int_0^t m_1(s)f(s) \, ds + \int_0^t m_2(s)^{\frac{2}{1 + \sigma}} S(s)^\frac{\sigma}{1 + \sigma} \left( \frac{f(s)}{e \sigma} \right)^\frac{1}{1 + \sigma} \, ds.
    \end{align*}
    Now we argue by contradiction and suppose that there is some maximal $t_0 \in [0, T)$ such that $f(t) = 0$ for $t \in [0,t_0]$, and $f(t) > 0$ for $t \in (t_0, T)$.
    Then clearly the above inequality yields that
    \begin{align*} f(t) \leq \int_{t_0}^t m_1(s)f(s) \, ds + \int_{t_0}^t m_2(s)^{\frac{2}{1 + \sigma}} S(s)^\frac{\sigma}{1 + \sigma} \left( \frac{f(s)}{e \sigma} \right)^\frac{1}{1 + \sigma} \, ds,
    \end{align*}
    for $t \geq t_0$.
    Moreover, using the fact that $z \leq \left(\frac{z}{\sigma}\right)^{\frac{1}{1+\sigma}}$ for $z \in (0, \sigma^{-\frac{1}{\sigma}})$ we find that
    \begin{align*}
    f(t) \leq \int_{t_0}^t \left(m_1(s) + m_2(s)^{\frac{2}{1 + \sigma}} S(s)^\frac{\sigma}{1 + \sigma}\right)  \left( \frac{f(s)}{\sigma} \right)^\frac{1}{1 + \sigma} \, ds,
    \end{align*}
    for all $t \in [t_0, t_1]$ where $t_1$ is chosen such that
    \[ \max_{t \in [t_0,t_1]} f(t) \leq \sigma^{-\frac{1}{\sigma}}. \]
    By continuity of $f$ such a $t_1$ exists and moreover we find that $t_1 \rightarrow T$ as $\sigma \rightarrow 0^+$.
    We wish to conclude by using the Bihari-LaSalle inequality \cite{bihari1956generalization}.
    To this end we firstly define the (bijective) function $\Lambda_\sigma: [0,\infty) \rightarrow [0,\infty)$ by
    \[ \Lambda_\sigma(y) := \int_{0}^y \left(\frac{\sigma}{z}\right)^\frac{1}{1+\sigma} \, dz = (1 + \sigma) \sigma^{\frac{-\sigma}{1 + \sigma}} y^\frac{\sigma}{1 + \sigma},\]
    with inverse
    \[ \Lambda_{\sigma}^{-1}(z) := \frac{\sigma}{(1+\sigma)^{\frac{1 + \sigma}{\sigma}}} z^{\frac{1 + \sigma}{\sigma}} .\]
    Now we may apply the Bihari-LaSalle inequality to see that
    \begin{align*} f(t) &\leq \Lambda_\sigma^{-1}\left( \int_{t_0}^t \left(m_1(s) + m_2(s)^{\frac{2}{1 + \sigma}} S(s)^\frac{\sigma}{1 + \sigma}\right) \, ds \right),
    \end{align*}
    for all $t \in [t_0, t_1]$.
    We then notice that
    \[ \lim_{\sigma \rightarrow 0^+} \Lambda_\sigma^{-1}(z) = \begin{cases}
        0, & z \in (0,1],\\
        \infty, & z > 1,
    \end{cases}\]
    and we may choose a $t^* \in (t_0, t_1]$ (independent of $\sigma$) such that
    \[ \int_{t_0}^{t^*} \left(m_1(s) + m_2(s)^{\frac{2}{1 + \sigma}} S(s)^\frac{\sigma}{1 + \sigma}\right) \, ds \leq \frac{1}{2}. \]
    The existence of such a $t^*$ follows from Young's inequality since $m_2 \in L^2(0,T), S \in L^1(0,T)$ so that
    \[\int_{t_0}^{t^*} \left(m_1(s) + m_2(s)^{\frac{2}{1 + \sigma}} S(s)^\frac{\sigma}{1 + \sigma}\right) \, ds \leq \int_{t_0}^{t^*} m_1(s) + (1+\sigma)m_2(s)^2 +  \frac{1}{1+ \sigma}S(s) \, ds.\]
    Then since $\Lambda_\sigma^{-1}(\cdot)$ is increasing we find that
    \[f(t) \leq \Lambda_\sigma^{-1}\left(\frac{1}{2}\right), \text{ for } t \in [t_0, t^*],\]
    and hence we may pass to the limit $\sigma \rightarrow 0^+$ to see that $f(t) = 0$ for $ t \in [t_0, t^*]$.
    This contradicts the maximality of $t_0$, and hence $f \equiv 0$ on $[0,T)$.
    
\end{proof}

\bibliographystyle{acm}
\bibliography{degenmob}
\end{document}